\newcommand{\norm}[2]{\bigl\| #1 \bigr\| _{#2}}
\newcommand{\pairing}[2]{\bigl\langle #1 , #2 \bigr\rangle}
\newcommand{\argmin}{\mathop{\mathrm{argmin}}}
\newcommand{\LL}{\mathcal{L}}
\newcommand{\LLinv}{{\mathcal{L}^{-1}}}
\newcommand{\lap}{\Delta}
\newcommand{\HH}{\mathbb{H}}
\newcommand{\RR}{\mathbb{R}}
\newcommand{\NN}{\mathbb{N}}
\newcommand{\ZZ}{\mathbb{Z}}
\newcommand{\CC}{\mathbb{C}}
\theoremstyle{plain}
\newtheorem{thm}{Theorem}[section]
\newtheorem{prop}[thm]{Proposition}
\theoremstyle{definition}
\newtheorem{rmk}[thm]{Remark}
\newcommand{\ermk}{\hfill\ensuremath{\blacksquare}}
\newcommand{\goesto}{\rightarrow}
\tikzstyle{format} = [draw, thin, align=center] 
\tikzstyle{solver} = [draw, thin, align=center, minimum height=4em]
\tikzstyle{medium} = [ellipse, draw, thin, align=center, 
\tikzstyle{decision} = [diamond, draw, thin, align=center, 
\newcommand{\toggleb}{}
\newcommand{\diff}{{\textup{d}}}
\newcommand{\fraki}{{\mathfrak{i}}}
\newcommand{\dl}{L} 		
\newcommand{\dt}{{\mathrm{dt}}}
\newcommand{\x}{{\mathbf{x}}}
\newcommand{\gridfn}{\mathbb{H}_N }
\newcommand{\bfm}{{\mathbf{m}}}
\newcommand{\bfr}{{\mathbf{r}}}
\newcommand{\bfs}{{\mathbf{s}}}
\newcommand{\gm}{{\Omega_N}} 
\numberwithin{equation}{section}
\begin{document}
\title[Benchmark comparison of the PFC and FCH solvers]{Benchmark computations of the phase field crystal and functionalized Cahn-Hilliard equations via fully implicit, Nesterov accelerated schemes}

\author{Jea-Hyun Park}
\address[J.-H. Park]{Department of Mathematics, University of California, Santa Barbara, Santa Barabara CA 93106-3080, USA} 
\email[J.-H. Park]{jhpark1@ucsb.edu}

\author{Abner J. Salgado}
\address[A.J. Salgado]{Department of Mathematics, The University of Tennessee, Knoxville, TN 37996, USA} 
\email[A.J. Salgado]{asalgad1@utk.edu}

\author{Steven M. Wise}
\address[S.M. Wise]{Department of Mathematics, The University of Tennessee, Knoxville, TN 37996, USA} 
\email[S.M. Wise]{swise1@utk.edu}

\begin{abstract}
We introduce a fast solver for the phase field crystal (PFC) and functionalized Cahn-Hilliard (FCH) equations with periodic boundary conditions on a rectangular domain that features the preconditioned Nesterov’s accelerated gradient descent method (PAGD). With a Fourier collocation spatial descretization, we employ various second-order-in-time schemes. We observe a significant speedup with this solver compared to the preconditioned gradient descent method. With the PAGD solver, fully implicit, second-order-in-time schemes are not only feasible to solve the PFC and FCH equations, but more efficient than some semi–implicit schemes in some cases. Specifically, benchmark computations of five different schemes are conducted and indicate that, for the FCH experiments, the fully implicit schemes perform better than their IMEX versions in terms of computational cost needed to achieve a certain precision. For the PFC, the results are not as conclusive. We believe that this is due to a milder nonlinearity of the PFC compared to the FCH equation. We also discuss some practical matters in applying the PAGD: an averaged Newton preconditioner and a sweeping-friction strategy as heuristic ways to choose good preconditioner and solver parameters. The sweeping-friction strategy exhibits almost as good a performance as the case of the best manually tuned parameters.
\end{abstract}

\keywords{Phase Field Crystal, Functionalized Cahn-Hilliard, Preconditioning, Nesterov Acceleration, Nonlinear Solver}

\subjclass[2010]{74A50,     
65M22,                      
65F08,                      
65B99.                       
}

\maketitle

	\section{Introduction}

We are interested in fast and accurate numerical solvers for initial value problems (IVPs) for nonlinear parabolic partial differential equations of the form 
\begin{equation}
	\partial_t u=M\Delta \frac{\delta \mathcal{E}}{\delta u}(u), \  t>0, \qquad u|_{t=0} = u_0,
	\label{eqn:genericPDE}
\end{equation}
supplemented with periodic boundary conditions. Here, $\frac{\delta \mathcal{E}}{\delta u}$ denotes the variational derivative of the energy 
\[
  \mathcal E(u)=\int_\Omega f(u, \nabla u, \Delta u) \diff x.
\]
The spatial domain is $\Omega$, which is assumed to be rectangular throughout this paper, and $M:\RR\to\RR$ is the so--called \emph{mobility constant}. While the mobility may depend on the unknown $u$ in general, we confine ourselves to the case of constant mobility $M\equiv1$ in this work. Two real world applications, the \emph{phase field crystal} (PFC) and \emph{functionalized Cahn-Hilliard} (FCH) equations (see Section \ref{sec:models} for more details) take this form and are of our main interest.

Our focus is on the numerical solvers. Nevertheless, for completeness, let us briefly mention existing works about the phenomena that the PFC and FCH equations model and their PDE analyses. These two equations are important models in materials science. The PFC equation describes crystal formation in a liquid bath, crack propagations in a crystal layer, and elastic and plastic deformations of a crystal lattice, to name a few. The FCH equations, on the other hand, describes network formation in a binary mixture and is a useful tool for modeling bilayer membrane formation and polymer electrolyte membrane evolution. The reader interested in applications is referred to \citep{elder2002, elder2004, emmerichPFC, asadi2015PFCreview} for the PFC, and \citep{gavish2011, gavish2012, promislow2009PEMreview} for the FCH model, respectively. There is some limited amount of work about these equations at the PDE level. For the PFC equation see \citep{conti2016PFCPDE, wang2010MPFCPDE}; whereas for the FCH see \citep{Cheng2020,dai2021FCHPDE}.   

Both the PFC and FCH are nonlinear, sixth order `parabolic' equations. As such, they share common numerical difficulties, such as accuracy and stability, and there have been efforts to overcome them; see, for example, \citep{cheng2008PFCsolver, wise2009PFCFD, hu2009PFCMG, gomez2012PFCnumerical, zhang2013PFCstepping} for the PFC, and \citep{jones2013FCHnumerical, Wise2018FCH, zhang2020benchmark, zhang2021FCHnumerical} for the FCH, respectively. If one wishes to have a long time evolution of the equations,  explicit discretization schemes in time must typically be excluded due to their stringent restriction on the time step size, ($\diff t\approx \diff x^6$), for stability. On the other hand, implicit schemes, which are more robust in terms of stability and accuracy, as a rule lead to a large, highly nonlinear system that must be solved at every time step. A substantial amount of work has been dedicated to developing schemes that mitigate the numerical difficulties or instabilities of either of these extreme approaches, fully explicit schemes, on one hand, and fully implicit schemes, on the other. Examples of this are the convex splitting technique \citep{cheng2008PFCsolver, wise2009PFCFD, hu2009PFCMG, gomez2012PFCnumerical, jones2013FCHnumerical, zhang2020benchmark, zhang2021FCHnumerical}, and the SAV technique \citep{MR4091597,MR4106756,MR4234206}, to name a few. Both of these approaches, however, are known to create larger local truncation errors than implicit schemes (\citep{xu2016disguise, zhang2020benchmark}). If a reliable, robust, and  efficient iterative solver is available to handle the nonlinear equations resulting from fully implicit schemes, a good balance between accuracy and the efficiency may be within reach.

In previous work \citep{psw2021PAGD}, we showed that the preconditioned Nesterov's accelerated gradient descent method (PAGD; see Algorithm \ref{alg:PAGD} for definition) can be applied to approximate the minimizer of a strongly convex objective that is \emph{locally Lipschitz} smooth as opposed to \emph{globally Lipschitz} smooth ones as most of the literature assumes. This significantly extends the applicability of the PAGD as a numerical PDE solver. In \citep{psw2021PAGD}, it is also  reported that the  PAGD's performance can be significantly better than that of the preconditioned gradient descent method (PGD; see Algorithm \ref{alg:PGD} for definition), especially on harder problems.  

In light of our previous discussion, the construction and analysis of efficient, time-adaptive, implicit schemes --- with the PAGD solver as the central engine --- for high--order nonlinear  parabolic equations, such as the PFC and FCH equations, is an underdeveloped subject and our main motivation in writing this contribution. Our first goal is to establish that the PAGD makes an efficient solver for real world problems (see Section \ref{sec:PAGDperform}).  But this begs the question: \emph{Does the PAGD make implicit schemes more attractive than, say, semi--implicit ones? What should one compare to answer this question?}  These questions are addressed in Section \ref{sec:scheme-comparision}. To compare schemes, we measure the computational cost needed to achieve a certain precision. Under this metric, our experiments indicate that implicit schemes are indeed a better choice when nonlinearity of the problem is ``strong.''  If one compares `dollars per digit' cost --- that is, the number of flops to achieve a desired level of accuracy in a computed solution ---  as we do in Section \ref{sec:scheme-comparision}, our experiments indicate that the implicit schemes are often a better choice over linear semi--implicit methods.

In the course of achieving our first goal, we also discuss two practical issues. One is about how to choose parameters involved in the PAGD scheme. To implement the PGD method, only the step size $s$ needs to be set. In contrast, PAGD contains an additional tunable parameter, which herein we call \emph{friction} and label $\eta$ (see \citep{psw2021PAGD} for motivation behind this naming convention). We suggest what we call the  \emph{sweeping}--$\eta$ or \emph{sweeping--friction} strategy rather than finding a single optimal constant by trial and error. It turns out that the sweeping--$\eta$ strategy is almost as efficient as the best--tuned constant friction setup and it is more robust than the latter in the sense that its performance depends less on different ranges for $\eta$ to sweep than that of the constant--friction setup does on different fixed values of $\eta$. A detailed discussion and its intuition is explained in Section \ref{sub:NLSolvers}. The second practical issue is how to choose a good preconditioner. Again, rather than finding necessary constants by trial and error, we suggest what we call the \emph{averaged Newton} preconditioner, which computes the parameters involved in the preconditioner in such a way that it mimics the second variation of the objective functional among a certain type of linear operators. See Section \ref{sub:NLSolvers} for a detailed discussion.

The rest of this paper is organized as follows: Section \ref{sec:models} summarizes the mathematical formulations of the two models of interest, namely the FCH and PFC equations. In Section \ref{sec:discretization}, we detail how to discretize (in time) the PDEs in four different ways, whose resulting solvers are used in the numerical experiments. We also talk about how to construct the numerical solvers in the same section and explain the sweeping--friction strategy and the averaged Newton preconditioner in detail. Section \ref{sec:numerics} summarizes the benchmark problems, the results of the numerical experiments, and our interpretation of the results. Finally, we make concluding remarks in Section \ref{sec:conclusion}.

\section{The phase field crystal and functionalized Cahn-Hilliard equations}
\label{sec:models}

We begin our discussion by providing some details regarding the models that we shall be interested in.

\subsection{Phase field crystal equation}

There are several versions of the phase field crystal (PFC) equations~\citep{asadi2015PFCreview, elder2002, elder2004, elder07,provatas10}. We will use only the prototypical version, as presented in  \citep{elder2004}. The other variants of the model bring similar numerical challenges. The PFC model, at its heart, describes solidification of a unary crystal from its liquid phase.  The model captures atomic--scale features on a diffusive time scale. Suppose that $u:\Omega\to\mathbb{R}$ defines an atom density. The free energy of the system (at constant temperature) is
	\begin{equation}
\mathcal{E}_{\mathrm{PFC}}(u):=\int_\Omega \left[ \frac{1}{4} u^4 + \frac{1-\varepsilon}{2}u^2-|\nabla u|^2 + \frac{1}{2} (\Delta u)^2 \right] \diff x,
	\label{obj:PFC-egy}
	\end{equation}
where $\varepsilon$ is a parameter that mimics the temperature variation. We assume that $u$ satisfies periodic boundary conditions on $\Omega$, for simplicity, and that the dynamics for $u$ are mass conserving and free energy dissipative. This leads to the following system of equations:
	\begin{subequations}
	\begin{alignat}{3}
\partial_t u &= M \Delta \mu,
		\\
\mu &=\frac{\delta \mathcal{E}_{\mathrm{PFC}}}{\delta u} = u^3 +(1-\varepsilon)u+2\Delta u +\Delta^2 u,
	\end{alignat}
	\label{eqn:PFCsystem}
	\end{subequations}
which is an $H^{-1}$--gradient flow with a constant mobility $M>0$. Mass is conserved, i.e., $\diff_t\int_\Omega u(x,t) \, \diff x = 0$, and energy is dissipated at the rate $\diff_t\mathcal{E}_{\mathrm{PFC}}(u) = -\int_\Omega\left|\nabla \mu\right|^2\,\diff x$.

\subsection{The functionalized Cahn-Hilliard equation} 
The functionalized Cahn-Hilliard equation is a phase field model that describes  network formation of amphiphilic di--block co--polymer mixtures~\citep{gavish2011, gavish2012}.  As with the PFC, there are several versions of the FCH equations, as the model needs to be fine--tuned to the physical system of interest~\citep{gavish2011, gavish2012, christlieb2020benchmark, zhang2020benchmark}. We will use the same model as that used in the computational benchmark paper~\cite{zhang2020benchmark}. (See also~\citep{christlieb2020benchmark}.)  Let $u:\Omega\to \mathbb{R}$ denote the volume fraction of component $A$ in a binary mixture of molecules consisting of $A$ and the other component $B$. The free energy of the mixture (at constant temperature) is
	\begin{equation}
	\mathcal{E}_{\mathrm{FCH}}(u):=\int_{\Omega} \left[ \frac{1}{2}\left(\varepsilon^{2} \Delta u-F^{\prime}(u)\right)^{2}-\left(\frac{\varepsilon^{2}}{2} \eta_{1}|\nabla u|^{2}+\eta_{2} F(u)\right) \right] \diff x,
	\label{obj:FCHegy}
	\end{equation}
where $F$ is a double well potential, $\varepsilon$ is an interface thickness parameter, and $\eta_1,\eta_2>0$ are material parameters. We assume, for simplicity that $u$ is periodic on the square domain $\Omega$. The corresponding FCH equation, with constant mobility $M>0$, reads:
	\begin{equation}
  	\begin{aligned}
	\partial_{t}u&=M\Delta \frac{\delta \mathcal{E}_{\mathrm{FCH}}}{\delta u} \\
	&= M\Delta\left[\left(\varepsilon^{2} \Delta u-F^{\prime \prime}(u)\right)\left(\varepsilon^{2} \Delta u-F^{\prime}(u)\right)-\left(-\varepsilon^{2} \eta_{1} \Delta u+\eta_{2} F^{\prime}(u)\right)\right].	
  \end{aligned}
	\label{eqn:FCHPDE}
\end{equation}
Written as a system of three second--order equations, we have, equivalently,
	\begin{subequations}
	\begin{alignat}{3}
\partial_t u &= M \Delta \mu,
		\\
\mu &=\frac{\delta \mathcal{E}_{\mathrm{FCH}}}{\delta u} = (\varepsilon^2\Delta u -F''(u)+\eta_1)\omega +(\eta_1-\eta_2)F'(u),
		\\
		\omega&=\varepsilon^2\Delta u - F''(u).
	\end{alignat}
	\label{eqn:FCHsystem}
	\end{subequations}
We assume that $F:\RR\to\RR$ is a polynomial double--well potential of the form
\[
  F(\zeta)=\frac{1}{2}(\zeta+1)^{2}\left(\frac{1}{2}(\zeta-1)^{2}+\frac{2}{3} \tau(\zeta-2)\right),
\] 
whose symmetry can be tuned by adjusting $\tau\in\RR$.

Similar to PFC, the FCH system can be seen as a $H^{-1}$--gradient flow of the energy $\mathcal{E}_{\mathrm{FCH}}$. As before, mass is conserved, $\diff_t\int_\Omega u(x,t) \, \diff x = 0$, and energy is dissipated at the rate $\diff_t\mathcal{E}_{\mathrm{FCH}}(u) = -\int_\Omega\left|\nabla \mu\right|^2\,\diff x$.

\section{Discretization and numerical solvers}
\label{sec:discretization}
In this section, we describe our numerical approach. First, in Section~\ref{sub:TimeStepping}, we present our adaptive time discretizations: the fully and semi--implicit BDF2 and midpoint rule (MP). This reduces our problem to a sequence of time independent, sixth order, nonlinear elliptic equations, which are then discretized by a Fourier collocation method as detailed in Section~\ref{sec:FCH-sp-disc}. Finally, the ensuing nonlinear systems of equations are solved using the linear and nonlinear solvers described in Sections~\ref{sub:LinSolvers} and \ref{sub:NLSolvers}.

\subsection{Time discretization}
\label{sub:TimeStepping}
\subsubsection{Fully and semi-- implicit BDF2 and midpoint rule}

We choose four different schemes for time discretization: the fully implicit second order backward differentiation formula (BDF2), fully implicit midpoint rule (MP), a (linear) semi--implicit second order backward differentiation formula (LBDF2), and a (linear) semi--implicit midpoint rule (LMP). There are several reasons for these choices. First, we do not consider explicit schemes since, to be stable, they  require extremely stringent time step size restrictions of the form $\diff t\approx\diff x^6$; see \citep{Wise2018FCH}.  Second,  we want to compare the performance of fully implicit schemes and their semi--implicit versions. Both of these classes of schemes are known to be unconditionally stable and accurate.  However, at first glance, one might expect that fully implicit schemes will not be computationally efficient, as they require solving a nonlinear system every time step.  On the other hand, while \emph{semi--implicit} schemes, like the first order convex splitting scheme \citep{eyre1998, eyre1998unconditionally, backofen2019, cherfils2021}, are known to be often fast and stable, these properties always come at the expense of accuracy. Thus, a fair comparison between these two classes of schemes must be made by considering both speed and accuracy.

Let us now describe our schemes in more detail. We introduce a nonuniform time grid $\{t_{n}\}_{n \geq 0}$ with (variable) time step defined by $\dt_{n+1}=t_{n+1} - t_{n}$. The sequence of functions $\{u^n : \Omega \to \RR\}_{n\geq 0}$ is meant to be an approximation of $u$, the solution of \eqref{eqn:genericPDE}, at the time grid points, i.e., $u^n(\cdot) \approx u(t_n, \cdot)$ for all $n \geq 0$.

The fully implicit schemes are defined as follows: given the initial value $u^0 = u_0$, find $u^{n+1}$, for $n \geq 0$, as the solution of
	\begin{equation}
a_n  u^{n+1}+b_n  u^{n}+c_n  u^{n-1} = M\Delta \frac{\delta \mathcal E}{\delta u}(\breve u^{n+1}),
	\label{eqn:FCH-semidis-implicit}
	\end{equation}
where the coefficients $\{a_n\}_{n\geq 0}$, $\{b_n\}_{n\geq 0}$, $\{c_n\}_{n\geq 0}$, and the choice of the function $\breve u^{n+1}: \Omega \to \RR$ define the various fully implicit schemes. In particular, for the BDF2 scheme, we have
\[
  a_0 = \frac1{\dt_1}, \qquad b_0 = -\frac1{\dt_1}, \qquad c_0 = 0, \qquad \breve u^1 = u^1,
\]
and, for $n \geq 1$,
\begin{align}
	\begin{split}
	&a_n=\frac{1}{\dt_{n+1}}+\frac{1}{\dt_{n+1}+\dt_{n}}, \quad
	b_n=-\frac{1}{\dt_{n+1}}-\frac{1}{\dt_{n}},
	\\ 
	&c_n=\frac{1}{\dt_{n}}-\frac{1}{\dt_{n+1}+\dt_{n}}, \quad
	\breve u^{n+1}= u^{n+1}.
	\end{split}
	\label{obj:BDF2}
\end{align}
The MP scheme is defined by
\begin{align}
	&a_n =\frac{1}{\dt_{n+1}}, \quad
	b_n = - \frac{1}{\dt_{n+1}}, \quad 
	c_n = 0,\quad
	\breve u^{n+1}=\frac{u^{n+1}+u^{n}}{2},
	\label{obj:MP}
\end{align}
for all $n\ge 0$.

The semi--implicit schemes we shall use are of linear IMEX type (see, e.g., \citep{christlieb2020benchmark} for details). They only require, at each time step, the solution of a linear system of equations. To achieve this, these methods decompose the chemical potential, $\frac{\delta \mathcal{E}}{\delta u}$, into two parts
\begin{equation*}
	\frac{\delta \mathcal E}{\delta u}=\mathfrak L(u)+\mathfrak N(u)
\end{equation*}
where the \emph{linear} part $\mathfrak L$ is a linear, positive semi--definite operator. The remaining terms constitute the \emph{nonlinear} part. We have chosen the following decompositions. For the PFC model \eqref{eqn:PFCsystem}, we set
\begin{align*}
  \mathfrak L(u) &= (1+\Delta)^2u, \\ \mathfrak N(u) &= u^3 -\varepsilon u,
\end{align*}
For the FCH model \eqref{eqn:FCHPDE}, we add and subtract a second and a zeroth order term and obtain
\begin{align*}
  \mathfrak L(u) &= \varepsilon^4\Delta^2 u + \kappa_2 (-\Delta) u + \kappa_0 u \\
  \mathfrak N(u) &= -\kappa_0 u -\varepsilon^2\Delta F'(u) - (\varepsilon^2(F''(u)-\eta_1)-\kappa_2) \Delta u +(F''(u)-\eta_2) F'(u),
\end{align*}
where the parameters are set to $\kappa_0=(1-2\tau^2 +\eta_2)$ and $\kappa_2=1$, respectively. The parameter $\kappa_0$ is equal to the linear coefficient of the zeroth order term $(F''(u)-\eta_2) F'(u)$, which is a quintic polynomial in $u$. The value of the parameter $\kappa_2$ was found by trial and error. We note that these parameters are not optimized and that they differ from those in \citep{christlieb2020benchmark} because it turned out that, in our setting, the values in \citep{christlieb2020benchmark} made some of our schemes extremely slow.
However, we tried several reasonable options and have chosen a combination that yields an expected evolution of the FCH model. The semi--implicit schemes are obtained by treating the linear part, $\mathfrak L(u)$, implicitly and the nonlinear part, $\mathfrak N(u)$, explicitly, using a second-order extrapolation of the previous approximations. That is,
\begin{equation}
	a_n  u^{n+1}+b_n  u^{n}+c_n  u^{n-1}= M\Delta\left( \mathfrak L(\bar u^{n+1})+\mathfrak N(\breve u^{n+1}) \right)
	\label{eqn:FCH-semidis-simplicit}
\end{equation}
where $a_n$, $b_n$, and $c_n$ are the same as in \eqref{obj:BDF2} and \eqref{obj:MP}. The extrapolations, for LBDF2  are given by
\begin{align*}
  \bar u^{n+1} &= u^{n+1}, \\ \breve u^{n+1}&=u^{n}+\rho_{n+1} (u^n - u^{n-1}),
\end{align*}
whereas, for the LMP, they are
\begin{align*}
	\bar u^{n+1}&=\frac12 \left( u^{n}+u^{n+1}\right),
	\\
	\breve u^{n+1}&=
	\frac{(2+\rho_{n+1})u^{n}-\rho_{n+1} u^{n-1}}{2},
\end{align*}
with $\rho_{n+1} =\frac{\dt_{n+1}}{\dt_{n}}$. When $n=0$, an artificial time iterate $u^{-1}:=u^{0}$ is used, which reduces the explicit treatment using the extrapolation to a pure explicit one, $u^{0}$, without extrapolating. Observe that \eqref{eqn:FCH-semidis-simplicit} is linear in $u^{n+1}$.


\subsubsection{Adaptive time stepping}
To be able to accurately carry out long time simulations, we employ variable time step sizes,  which are chosen adaptively~\citep[Ch. III.5]{Hairer1993}. At every time step, after finding our numerical solution, we compute an error indicator and, if it is not smaller than our prescribed tolerance the current approximation is discarded, the step size reduced, and a new numerical solution is computed.

For the BDF2, LMP, or LBDF2 schemes, we follow the adaptive strategy detailed in \citep[Section 3.2]{christlieb2020benchmark}, which we refer to as \emph{AM3 stepping}. One exception is that the midAB2 stepping (see below) is used for PFC2 experiment (see Section \ref{sec:numerics} for details of PFC2 experiment) when it is solved by the LMP solver. This is because midAB2 stepping yields a way better result than AM3 for this computation. We now describe the AM3 stepping. See also Algorithm \ref{alg:solver}. We first introduce a predetermined stepping tolerance $\mathrm{TOL}>0$, as well as maximum and minimum time step sizes, denoted by $\dt_{min}$ and $\dt_{max}$, respectively. We begin by setting $n=0$, a tentative time step size $\tilde \dt_{n+1} = \dt_{min}$, and $t_0 =0$. while $t_n < T$:
\begin{enumerate}[1.]
  \item Compute $\tilde u^{n+1}$. This is a tentative solution at $\tilde t_{n+1}=t_n+\tilde \dt_{n+1}$, and is obtained using one of the main schemes (BDF2, LMP, or LBDF2). Also, copy current time step size $\tilde \dt_{tmp} = \tilde \dt_{n+1}$. This is used when a new tentative time step size is computed below.
  
  \item Compute $\hat u^{n+1}$. This is a solution of a higher order accuracy obtained using an explicit variant of the AM3 scheme:
  \begin{equation}
    \begin{aligned}
      \hat u^{n+1} =u^{n} &+\frac{\tilde \dt_{n+1}}{6} \left[\frac{3+2 \rho_{n+1}}{1+\rho_{n+1}} R\left(\tilde{u}^{n+1}\right) \right. \\
      &+ \left. (3+\rho_{n+1}) R\left(u^{n}\right)-\frac{\rho_{n+1}^{2}}{1+\rho_{n+1}} R\left(u^{n-1}\right)\right],
    \end{aligned}
      \label{fml:AM3-high-sol}
  \end{equation}
  with
  \[
    \rho_{n+1}=\frac{\tilde \dt_{n+1}}{\dt_{n}}, \qquad R(v)= M\Delta \frac{\delta \mathcal E}{\delta u}(v).
  \]
  
  \item Estimate the error with
  \begin{equation}
  \mathrm{ERR} =	
  \frac{\norm{\tilde u^{n+1}-\hat u^{n+1}}{L^2}}{\norm{\hat u^{n+1}}{L^2}}.
  \label{fml:AM3-err}
  \end{equation}
  
  \item If $\mathrm{ERR} \leq \mathrm{TOL}$ or $\tilde \dt_{n+1} \le \dt_{min}$:
  \begin{itemize}
    \item $\dt_{n+1} = \tilde \dt_{n+1}$,
    
    \item $t_{n+1} = t_n + \dt_{n+1}$,
    
    \item $u^{n+1} = \tilde{u}^{n+1}$,
    
    \item Increment $n$.
  \end{itemize}
    \item Compute a new \emph{tentative} time step by
  \begin{equation}
      \bar \dt=0.9 \left(\frac{\mathrm{TOL}}{\mathrm{ERR}}\right)^{1 / 3} \tilde \dt_{tmp},
      \label{fml:time-step}
  \end{equation}
  and
  \begin{equation}
      \tilde \dt_{n+1} = \max \left(\dt_{min}, \min \left(\bar \dt, \dt_{max}\right) \right),
      \label{fml:time-step-med}
  \end{equation}

\end{enumerate}

Note that $\tilde \dt_{n+1}$ computed in \eqref{fml:time-step-med} can be a tentative time step size for the next time marching (if $n$ is incremented in step 4) or a shrunken time step size for a recomputation during the current time marching (if $n$ is not changed in step 4). We also comment that, in \eqref{fml:time-step}, the number 0.9 is a so--called \emph{safety factor}, and that the power of a third in \eqref{fml:time-step} is related to the fact that the local truncation error of our schemes of interest (MP, BDF2, LMP, and LBDF2) is of order two.

Following a suggestion found in \citep{burkardt2020}, we use a different error estimator in the case of the MP scheme (and the LMP when solving PFC2 as mentioned above as an exception), which we call \emph{midAB2 stepping}. Using the computed values of the solution at $t_{n-2}$, $t_{n-1}$, and $t_{n}$ one can compute approximations at the midpoints $t_{n-1/2}$ and $t_{n-3/2}$, these are then used to construct a second order polynomial that is then evaluated at $\tilde t_{n+1}=t_n + \tilde\dt_{n+1}$ to obtain
\begin{equation}
	\begin{split}
		\hat u^{n+1}_{AB2}=&u^{n} \frac{\left(\tilde\dt_{n+1}+\dt_{n}\right)\left(\tilde\dt_{n+1}+\dt_{n}+\dt_{n-1}\right)}{\dt_{n}\left(\dt_{n}+\dt_{n-1}\right)}
		\\
		&-u^{n-1} \frac{\tilde\dt_{n+1}\left(\tilde\dt_{n+1}+\dt_{n}+\dt_{n-1}\right)}{\dt_{n} \dt_{n-1}}\\
		&+u^{n-2} \frac{\tilde\dt_{n+1}\left(\tilde\dt_{n+1}+\dt_{n}\right)}{\dt_{n-1}\left(\dt_{n}+\dt_{n-1}\right)}.
	\end{split}
	\label{fml:midAB2-high-sol}
\end{equation}
Then, the local truncation error can be computed by
\begin{equation}
	{T}_{n+1}=\left(\tilde u^{n+1}_{MP}-\hat u^{n+1}_{{AB2}}\right) \frac{1}{1-1 /\left(24 {R}_{n}\right)},
\end{equation}
where $\tilde u^{n+1}_{MP}$ is a tentative solution at $\tilde t_{n+1}$ using the MP and 
\begin{equation}
	{R}_{n}=\frac{1}{24}+\frac{1}{8}\left(1+\frac{\dt_{n}}{\tilde\dt_{n+1}}\right)\left(1+2 \frac{\dt_{n}}{\tilde\dt_{n+1}}+\frac{\dt_{n-1}}{\tilde\dt_{n+1}}\right).
\end{equation}
To match the scaling of the error with the AM3 stepping case, we use a $L^2$--normalized error estimator
\begin{equation}
\mathrm{ERR}= \frac{\norm{\tilde u^{n+1}_{MP}-\hat u^{n+1}_{{AB2}}}{L^2}}{\norm{\hat u^{n+1}_{AB2}}{L^2}} \frac{1}{1-1 /\left(24 {R}_{n}\right)}
\label{fml:midAB2-err}
\end{equation}
when determining the tentative step size. In the numerical experiments, the midAB2 stepping applies from the third time step $\dt_3 =t_3 -t_2$ because it requires three previous approximations. For $n=1, 2$, we use AM3 stepping. We refer the reader to \citep{burkardt2020} for further details on the midAB2 stepping strategy.

\subsection{Spatial discretization via the Fourier collocation method}\label{sec:FCH-sp-disc}
To take advantage of the fact that our PDEs are supplemented with periodic boundary conditions on a square, we use the Fourier collocation method for spatial differentiation and integration. 

We introduce $K \in \NN$, so that the grid resolution is $N = 2K+1$, and the grid spacing is $h=1/N$. We define
\begin{align*}
	\mathbb{N}^2_{0,N} &= \left\{ \bfm =(m_1,m_2) \in\ZZ^2 \ \middle| \ 0\le m_1,m_2\le N \right\},
	\\
	\mathbb{N}^2_{N} &= \left\{ \bfm =(m_1,m_2) \in\ZZ^2 \ \middle| \ 1\le m_1,m_2\le N \right\},
	\\
	\ZZ^2_K &= \left\{ \bfr =(r_1,r_2) \in\ZZ^2 \ \middle| \ -K\le r_1,r_2\le K\right\},
\end{align*}
and introduce the uniform grid domain
\begin{equation}
  \gm = [0,L] \cap h \NN_{0,N}^2.
\label{obj:grid-domain}
\end{equation}
We also define the \emph{trial space} of periodic grid functions
\begin{equation}
	\begin{split}
\gridfn = 
\left\{v_N:\gm\goesto\CC \ \middle| \ v_N(0,hm)=v_N(L,hm), \right.
\\
\qquad \left. v_N(h\ell,0)=v_N(h\ell,L), 
(m,\ell) \in \NN_{0,N}^2 \right\}.
	\end{split}
\label{obj:grid-fn}
\end{equation}
In the numerical experiment for the PFC, the domain is translated so that $\Omega=[-L/2,L/2]$ and the grid domain is also shifted accordingly. This is purely a cosmetic matter since we are dealing with the periodic boundary conditions. 

We omit the details of the case where $N=2K$ for brevity but, up to a slight difference in indexing, a similar construction can be carried out.

Finally, we replace the differential operators in our problems with so--called \emph{Fourier interpolation differentiation}; see \citep[pp. 123---124]{canuto2007spectral}, 
which we now describe in some detail for the FCH case. For the PFC, a shift of $\dl/2$ in each coordinate direction is necessary. First, we endow $\gridfn$ with the discrete $L^2$--inner product
\[ (u_N,v_N)_N=h^2\sum_{\bfs\in\mathbb{N}^2_{N}} u_N(\x_\bfs) \overline{v_N(\x_\bfs)},\]
where $\overline{v_N(\x_\bfs)}$ denotes the complex conjugate.

The Fourier interpolation differentiation can be defined and computed via its diagonalization using the \emph{discrete Fourier transform} (DFT) and the \emph{inverse discrete Fourier transform} (IDFT). 
The DFT $\hat w_K$ of $w_N \in \gridfn$ is defined by
\[ 
\hat w_K(\bfr)=(w_N,e^{\frac{2\pi}{\dl} \fraki \bfr\cdot(\cdot)})_N=h^2\sum_{\bfs\in\mathbb{N}^2_{N}} w_N(\x_\bfs) e^{-\frac{2\pi}{\dl} \fraki \bfr\cdot\x_\bfs},
\quad \bfr \in \ZZ^2_K.
\]
In particular, given $w_N\in\gridfn$ and $\alpha\in \{-1, 1, 2\}$, we set
\begin{equation}
	[(-\lap_N)^\alpha w_N](\x_\bfm)=\sum_{\bfr\in\ZZ^2_K} \left(\frac{4\pi^2 |\bfr|^2}{\dl^2} \right)^\alpha \hat w_K(\bfr) e^{\frac{2\pi}{\dl} \fraki \bfr\cdot\x_{\bfm}}.
	\label{def:frac-Lap}
\end{equation}
This defines the discrete Laplacian if $\alpha=1$, the discrete biharmonic operator if $\alpha = 2$, and the inverse Laplacian if $\alpha = -1$ and $(w_N,1)_N=0$.
In the last case, however, $\bfr=\mathbf{0}$ must be excluded in the summation though it is present for notational convenience. 
Define the following mesh-dependent \emph{negative norm}
\begin{equation}
	\norm{w_N}{-1,N}=\sqrt{\left( (-\Delta_N)^{-1}w_N,w_N\right)_N}.
	\label{def:neg-norm}
\end{equation}
In addition to replacing differential operators, the spatial integration is replaced with the \emph{composite trapezoidal rule}. This is indicated by the symbol $( \, \cdot \, , \, \cdot \,)_N$.

There are two significant advantages of using the Fourier collocation method. First, it is accurate. For smooth, periodic functions, the two aforementioned operations are known to be spectrally accurate (see \citep[pp. 53, 272]{canuto2007spectral} and \citep{trefethen2014trapezoidal}). Second, it is fast. We can take advantage of the fast Fourier transform (FFT) when computing the Fourier interpolation differentiation, which reduces the computational cost significantly (see \citep[pp. 52---54]{canuto2007spectral}). 

\subsection{Linear solvers for semi--implicit schemes}
\label{sub:LinSolvers}
As mentioned above, the semi--implicit schemes require us to solve a linear equation at every time step. It turns out that the coefficient matrix of the linear system results only from differentiation. Since we are using a Fourier collocation method, we apply the FFT to solve the linear equations involved in the LMP and LBDF2 schemes.

\subsection{Nonlinear solvers for fully-implicit schemes}
\label{sub:NLSolvers}
Let us now discuss solvers for the fully implicit schemes. We employ the \emph{preconditioned Nesterov's accelerated gradient descent method} (PAGD) as our main nonlinear solver, whose convergence theory and applications to nonlinear PDEs are found in \citep{psw2021PAGD}, and the \emph{preconditioned gradient descent method} (PGD) for comparison, which is studied in \citep{feng2017preconditioned}. To summarize how these solvers work, let us explicitly state the fully discrete problem required for time marching, where we drop the superscript for the new time marching for ease of notation and so that it can be viewed as a time--independent problem on its own: given $u^{n-1}_N, u^{n}_N \in \HH_N$, find $u_N\in \HH_N$ such that 

\begin{equation}
	a_n  u_N+b_n  u_N^{n}+c_n  u_N^{n-1}=M\Delta_N \frac{\delta \mathcal E_{{N}}}{\delta u}(\breve u_N),
	\label{eqn:FCH-fulldis-implicit}
\end{equation}
where, as before, $\breve u_N=u_N$ or $\frac{u_N+u_N^{n}}{2}$ if the BDF2 or MP is used, respectively.  By $\mathcal E_N$ we denote the discrete version of either the PFC \eqref{obj:PFC-egy} of FCH \eqref{obj:FCHegy} energy. Namely, the one that is defined by replacing the differential operators by Fourier interpolation differentiation, and integrals by the trapezoidal rule.

Since the problem is nonlinear, we need to employ an iterative method. To this end, we recast \eqref{eqn:FCH-fulldis-implicit} as a minimization problem.

\begin{prop}[minimization problem]
	Let $\gm$ be  given by \eqref{obj:grid-domain}. Then, $u_N\in\gridfn$ solves the discrete PDE \eqref{eqn:FCH-fulldis-implicit} if and only if it is a critical point of the objective
	\begin{align}
		G_N(v_N)=\frac{1}{2Ma_n}\norm{a_n v_N+b_n u_N^n+c_n u_N^{n-1}}{-1,N}^2 +  {\widetilde{\mathcal{E}}_{N}}(\breve v_N),
		\label{obj:FCH-mini-objv}
	\end{align}
	where 
	\begin{align}
		 {\widetilde{\mathcal{E}}_{N}}(\breve v_N) = 
		 	\begin{cases}
		 		\mathcal E_{N}(v_N) & \text{if BDF2 is used},
		 		\\
		 		2 \mathcal E_{N}( \frac{v_N+u_N^{n}}{2}) & \text{if MP is used}.
		 	\end{cases}
	\end{align}
\end{prop}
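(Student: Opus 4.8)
The plan is to recognize the claimed equivalence as the statement that the zero set of the first variation of $G_N$ (its discrete $L^2$ gradient) coincides with the solution set of \eqref{eqn:FCH-fulldis-implicit}. Since $\gridfn$ is finite dimensional and $G_N$ is smooth on it, being a critical point means that $\frac{d}{d\epsilon}G_N(v_N+\epsilon\phi_N)\big|_{\epsilon=0}$ vanishes for every admissible direction $\phi_N$; throughout, $\frac{\delta \mathcal E_N}{\delta u}$ is read as the Riesz representative of $D\mathcal E_N$ with respect to $(\cdot,\cdot)_N$, consistent with the discretization of Section~\ref{sec:FCH-sp-disc}. First I would differentiate the two summands of $G_N$ separately and read off the resulting Euler--Lagrange condition.

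For the quadratic term, write $w_N = a_n v_N + b_n u_N^n + c_n u_N^{n-1}$, so that $w_N \mapsto w_N + \epsilon a_n \phi_N$ under the perturbation. Using that $(-\Delta_N)^{-1}$ is a real Fourier multiplier, hence self-adjoint for $(\cdot,\cdot)_N$ by Parseval, a direct computation gives
\[
  \frac{d}{d\epsilon}\Big|_{\epsilon=0}\frac{1}{2Ma_n}\norm{w_N + \epsilon a_n\phi_N}{-1,N}^2 = \frac1M\bigl((-\Delta_N)^{-1}w_N,\phi_N\bigr)_N .
\]
For the energy term the chain rule produces $\bigl(\frac{\delta \mathcal E_N}{\delta u}(\breve v_N),\phi_N\bigr)_N$ in \emph{both} cases: for BDF2 this is immediate since $\widetilde{\mathcal E}_N(\breve v_N)=\mathcal E_N(v_N)$ and $\breve v_N = v_N$, while for MP the prefactor $2$ in $2\mathcal E_N(\frac{v_N+u_N^n}{2})$ exactly cancels the factor $\tfrac12$ coming from differentiating through $\breve v_N = \frac{v_N+u_N^n}{2}$. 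This cancellation is precisely the reason $\widetilde{\mathcal E}_N$ carries that prefactor, and it is what lets the two schemes share the single template \eqref{eqn:FCH-fulldis-implicit}. Adding the contributions, the discrete gradient of $G_N$ at $v_N$ is the grid function $\frac1M(-\Delta_N)^{-1}w_N + \frac{\delta \mathcal E_N}{\delta u}(\breve v_N)$.

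Finally I would pass from stationarity to the PDE by applying $\Delta_N$. Stationarity means this grid function is $(\cdot,\cdot)_N$-orthogonal to every admissible direction. The one subtlety — the step I expect to require the most care — is the zero Fourier mode: $(-\Delta_N)^{-1}$ discards it, so the quadratic term of $G_N$ is blind to the mean of $w_N$, and the equivalence must be understood over the mass-conserving affine subspace $\{v_N : (w_N,1)_N = 0\}$, whose tangent directions $\phi_N$ are exactly the mean-zero grid functions. Orthogonality to all such $\phi_N$ is equivalent to $\frac1M(-\Delta_N)^{-1}w_N + \frac{\delta \mathcal E_N}{\delta u}(\breve v_N)$ being constant on $\gm$; applying $\Delta_N$, using $\Delta_N(-\Delta_N)^{-1}=-P$ with $P$ the orthogonal projection onto mean-zero functions (so that $\Delta_N$ annihilates the constant), and invoking $P w_N = w_N$ on the subspace, collapses this to $w_N = M\Delta_N \frac{\delta \mathcal E_N}{\delta u}(\breve v_N)$, which is exactly \eqref{eqn:FCH-fulldis-implicit}. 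The converse reverses these steps: a solution of \eqref{eqn:FCH-fulldis-implicit} has $w_N$ in the range of $\Delta_N$, hence mean-zero, so $(-\Delta_N)^{-1}w_N$ returns the Euler--Lagrange identity up to a constant and $v_N$ is stationary. I would close by observing that the coefficient identity $a_n+b_n+c_n=0$, valid for all the BDF2 and MP coefficients, is what makes this affine subspace the natural one and guarantees mass is conserved along the iteration.
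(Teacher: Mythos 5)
Your proposal is correct and follows essentially the same route as the paper: an explicit computation of the first variation of $G_N$, yielding the gradient $\frac{1}{M}(-\Delta_N)^{-1}(a_n v_N+b_n u_N^n+c_n u_N^{n-1})+\frac{\delta \mathcal E_N}{\delta u}(\breve v_N)$, with the factor $2$ in the MP case cancelling the $\tfrac12$ from the chain rule. Your extra care about the zero Fourier mode --- restricting to the mass-conserving affine subspace so that stationarity means the gradient is constant rather than zero, and checking that $a_n+b_n+c_n=0$ --- is handled only implicitly in the paper (in the opening lines of its proof and in the subsequent remark on mean-zero projections), so your treatment is a slightly more complete version of the same argument.
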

\begin{proof}
	First, if $v_N$ solves \eqref{eqn:FCH-fulldis-implicit}, the inverse Laplacian of $a_n v_N+b_n u_N^n+c_n u_N^{n-1}$ is well--defined since $v_N$ has zero mean. To see this, we take the discrete inner product $(\cdot,1)_N$ on both sides of \eqref{eqn:FCH-fulldis-implicit} and  use \eqref{def:frac-Lap} to conclude that the discrete Laplacian of any periodic grid function must have zero mean.
	
	Next, an explicit calculation shows
	\begin{equation}
		\pairing{G_N'(v_N)}{w_N}=\frac{1}{M} \left( a_n v_N+b_n u_N^n +c_n u_N^{n-1}, w_N \right)_{-1,N} + \pairing{\frac{\delta {{\widetilde{\mathcal{E}}_{N}}}}{\delta u}(\breve v_N)}{w_N},
		\nonumber
	\end{equation}
or, in other words, 
	\begin{equation}
	G'_N(v_N)=\frac{1}{M}(-\Delta_N)^{-1} (a_n v_N +b_n u_N^n +c_n u_N^{n-1})+\frac{\delta { {\widetilde{\mathcal{E}}_{N}}}}{\delta u}(\breve v_N).
	\label{misc07}
	\end{equation}
The factor $2$ in the MP case comes from the chain rule.
\end{proof}

\begin{rmk}[convexity of $G_N$]
The objective functional $G_N$, given in \eqref{obj:FCH-mini-objv}, is not convex in general. For this reason, we speak of critical points rather than minimizers.\ermk
\end{rmk}

\begin{rmk}[discrete mass conservation]
\label{rmk:meanzero}
The iterative solvers that we consider use a slight variant of the gradient \eqref{misc07}. Namely, they use the mean--zero projection of intermediate grid functions to ensure mass conservation at the discrete level and to properly compute the discrete inverse Laplacian. More specifically, we use the negative of the following gradient as the residual
\begin{align}
	G_N'(v_N)&=\frac{1}{M} (-\Delta_{N})^{-1}[\Pi_0 (a_n v_N+b_n u_N^n +c_n u_N^{n-1})] + \Pi_0\frac{\delta {\widetilde{\mathcal{E}}_{N}}}{\delta u}(\breve v_N). 
	\label{obj:G'}
\end{align}
The first mean--zero projection is not needed if we have an infinite precision since they must be mean--zero. However, due to  round--off error, we need it to keep the mass conservation at the discrete level. The second projection really changes the discrete chemical potential. However, as can be seen from \eqref{eqn:FCH-fulldis-implicit}, adding a constant to the chemical potential does not change the main unknown $u_N$, i.e., the discrete Laplacian annihilates the constant added to the discrete chemical potential. Moreover, the second mean--zero projection allows the inversion of the preconditioner (see below) to be well--defined, which involves the discrete inverse Laplacian. \ermk
\end{rmk}

\subsubsection{The averaged Newton preconditioner}
As the names indicate, the iterative solvers used in this work involve a preconditioner. There is no general way to construct a suitable preconditioner. For this reason, we make use the energy structure of the PFC and FCH models to develop what we call an \emph{averaged Newton preconditioner}. 

To present the idea without introducing unnecessary technicalities, consider the following problem. Let $m \in \NN$ and suppose that $G : \RR^m \to \RR$ is a smooth, convex objective functional with a unique minimizer $x=\argmin_{\tilde x \in \RR^m} G(\tilde x)$. To find it, our starting point is to view the Newton's method as a generalization of the preconditioned gradient descent method, where the preconditioner is the second variation of the objective. That is, 
\[x_{n+1} =   x_{n}  - G''(x_n)^{-1}G'(x_{n}).\] 
Now, the goal is to come up with a preconditioner $\mathcal{G}$ (that is independent of $n$) that resembles $G''(x_n)$ but is easier to invert. For that, we exploit the structure of the objective function $G$. We know that, necessarily, $G''(x_n)$ is a linear operator, but that it may depend on the entries of $x_n$. To make it even simpler to invert, we remove this dependence by replacing these by averaged quantities.

Let us now move on to our case of interest. The second variation for the PFC model reads
\begin{align}
	G_N''(v_N)w_N=\frac{a_n}{M}  (-\Delta_N)^{-1}w_N + (3u^2 + 1-\varepsilon)w_N +2\Delta_N w_N +\Delta_N^2 w_N ,
	\label{eql:PFC-2nd-var-str}
\end{align}
whereas the one for the FCH model is
\begin{align}
	G_N&''(v_N)w_N=\frac{a_n}{M}  (-\Delta_N)^{-1}w_N
	\nonumber
	\\
	&+\left[F''(v_N)^2 - \eta_2 F''(v_N)  -\varepsilon^2\Delta_N F''(v_N)-F'''(v_N)(\varepsilon^2\Delta_N v_N-F'(v_N))  \right]w_N   
	\nonumber
	\\
	& + (F''(v_N)\varepsilon^2-\eta_1\varepsilon^2)(-\Delta_N w_N)
	\nonumber
	\\
	&  +\varepsilon^4\Delta_N^2w_N.
	\nonumber
\end{align}   
Both have the form
\begin{align*}
	G_N''(v_N)w_N= \beta_{-2} (-\Delta_N)^{-1}w_N +\tilde \beta_{0} w_N +\tilde \beta_2(-\Delta_N) w_N +\beta_4\Delta_N^2 w_N,
\end{align*}
where the parameters $\beta_{-2}$ and $\beta_4$ are constants while $\tilde\beta_{0}$ and $\tilde\beta_{2}$ are functions of $v_N$ for the FCH equation. The same is true for the PFC model except $\tilde\beta_{2}$ is also a constant. Based on this observation, we consider a preconditioner of the same form, but where all the parameters are constants, that is,
\begin{align*}
	\LL w_N := \beta_{-2} (-\Delta_N)^{-1}w_N +\beta_{0} w_N +\beta_2(-\Delta_N) w_N +\beta_4\Delta_N^2 w_N.
\end{align*}
The question that remains then, is how to choose these constants. Our approach, for the PFC models, is that if the parameter is constant, then we keep the value, whereas for those that are variable we set them to be the absolute value of its average across the domain. For the FCH model, however, we have chosen to drop several terms from $\beta_0(v_N)$ when computing its average for practical purposes. More specifically, $-\varepsilon^2\Delta_N F''(v_N)$ and $- \varepsilon^2F'''(v_N) \Delta_N v_N$ are not included since their contributions are small (they involve $\varepsilon^2$) and to save computations (they involve computing a Laplacian). At every time--step, these constants are computed using the initial guess, and kept fixed throughout the iterative process. They are only recomputed once we advance in time.

\subsubsection{PGD and PAGD}

The PGD method is given in Algorithm \ref{alg:PGD}. This method works the same way as usual gradient descent methods to find a critical point of  \eqref{obj:FCH-mini-objv} except that, as mentioned above in Remark \ref{rmk:meanzero}, we take the mean--zero projection on some parts of the gradient when computing the residual and apply a preconditioner to get the search direction.

\begin{algorithm}[h]
	\SetAlgoLined
	\KwData{$x_0$, $s>0$, $\text{TOL}_{iter}>0$}
	\CommentSty{initial guess, step size, tolerance}\;
	\KwResult{$x_{\infty}$}
	\CommentSty{approximate solution/critical point/minimzer}\;
	$i=0$ \CommentSty{initialization}\;
	\While{$\|d_i\|_{\infty} < \mathrm{TOL}_{iter}$}{
		$r_i = -G_N'(x_i)$ \CommentSty{find the residual using \eqref{obj:G'}}\;
		$d_i=\LLinv (r_i)$ \CommentSty{find the search direction, i.e., solve $\LL d_i= r_i$}\;
		$x_{i+1}=x_{i} + s d_i$ \CommentSty{descent step}\;
		$i \gets i+1$\;
	}
	\caption{Preconditioned gradient descent method (PGD)}
	\label{alg:PGD}
\end{algorithm}

The PAGD is an accelerated version of the PGD. As explained in \citep{psw2021PAGD}, it requires an additional parameter $\eta$. This corresponds to the \emph{friction coefficient} of the rolling ball system associated to the PAGD. That is, the PAGD is nothing but a discretization of a second order ordinary differential equation (ODE) describing the motion of a ball as it rolls down to the bottom of  a well (the graph of the objective functional) in the presence of constant friction. For more details about this relation, we refer interested readers to \citep{psw2021PAGD}. As shown in \citep{psw2021PAGD}, the choice of friction parameter $\eta$ can be justified theoretically if the objective is strongly convex and the strong convexity constant is known. However, this is not the case for the PFC nor FCH equations and a different approach is needed. We could have tried to find an optimal value for the friction coefficient $\eta$ by trial and error. Instead, we propose the scheme given in Algorithm \ref{alg:PAGD}, which is a slight variant of the PAGD and we call the \emph{sweeping--$\eta$} or \emph{sweeping--friction} strategy. Here, the friction coefficient $\eta$, takes a different value from a prescribed list of values at every iteration.

We note that this strategy, upon choosing a reasonable range of values, works very well in practice. In fact, it is almost as efficient as the optimally tuned, fixed choice in terms of the number of iterations required to  reach a prescribed tolerance. Moreover, the efficiency is less sensitive to a change of the range than it is to that of the fixed value. Based on the convergence theory of the PAGD in \citep{psw2021PAGD}, an equally spaced range
starting from some small number (e.g., 0.1) ending at $1/\sqrt s$ is possible.  However, from our experience, the right endpoint can be slightly larger than $1/\sqrt s$. The intuition behind this can be explained by the rolling ball analogy. As opposed to the constant friction case, the sweeping--$\eta$ strategy corresponds to ``putting on the brake'' repeatedly, say, from softly to hard. If the ball is rolling down to the bottom in this manner, our experience from driving a vehicle suggests that it will effectively stop the ball near the bottom of the valley of the landscape, an analogy to converging to a local minimum. Finally, when the PAGD or PGD is used as a nonlinear solver for time marching equations, a good option for the initial guess is the extrapolation of the previous two histories, and in fact, this is adopted in the numerical experiments that follow. 

\begin{algorithm}[h]
	\SetAlgoLined
	\KwData{$x_0$, $x_{-1}:=x_0$, $s>0$, $(\eta_0, \eta_1, \ldots, \eta_{k-1})\in\RR^{k}$, $\text{TOL}_{iter}>0$\;}
	\CommentSty{initial guess, fictitious previous iterate, step size, range of sweeping-$\eta$  with $0<\eta_j$ ($0\le j\le k-1$), tolerance\;}
	\KwResult{$x_{\infty}$\;}
	\CommentSty{approximate local minimizer}\;
	$i=0$ \CommentSty{initialization}\;
	\While{$\|d_i\|_{\infty} < \text{TOL}_{iter}$}{
        $j \gets i \mod k$ \CommentSty{sweep over the possible choices of $\eta$}\;
		$\lambda_i = \frac{1-\eta_{j}\sqrt{s}}{1+\eta_{j}\sqrt{s}}$ \CommentSty{compute the extrapolation coefficient}\;
		$y_i= x_i + \lambda_i (x_i - x_{i-1})$ \CommentSty{compute the extrapolated position}\;
		$r_i = -G_N'(y_i)$ \CommentSty{find the residual using \eqref{obj:G'}}\;
		$d_i=\LLinv (r_i)$ \CommentSty{find the search direction, i.e., solve $\LL d_i= r_i$}\;
		$x_{i+1}=y_{i} + s d_i$ \CommentSty{gradient step}\;
		$i \gets i+1$\;
	}
	\caption{Preconditioned Nesterov's accelerated gradient descent method (PAGD) with sweeping--$\eta$.}
	\label{alg:PAGD}
\end{algorithm}

\begin{algorithm}[h]
	\SetAlgoLined
	\KwData{Model, $u^0$, $T$, time discretization, $\dt_{min}$, $\dt_{max}$, $\mathrm{TOL}$\;}
	\CommentSty{PFC or FCH; initial condition; final time; MP, BDF2, LMP, or LBDF2; minimum and maximum time step sizes; stepping tolderance\;}
	
	\KwResult{$(t_1, t_2, t_3,\ldots)$, $\left(u^{1}, u^{2}, u^{3},\ldots\right)$\;}
	\CommentSty{a sequence of times, a sequence of approximate solutions at the prescribed times\;}
	$n=0$, 
	$t_0=0$ \CommentSty{Initialization. Set initial time}\;
	$\tilde \dt_1$=$\dt_{min}$  \CommentSty{Initialization. Set step size}\;
	
	\While{$t_n<T$ }{
        $\tilde t_{n+1}=t_{n}+\tilde \dt_{n+1}$ \CommentSty{Tentative new time}\;
		$\tilde u^{n+1} $ \CommentSty{Tentative solution at $\tilde t_{n+1}$. The solution depends on the Model and Solver}\;
		$\hat u^{n+1}$ \CommentSty{The higher order solution: \eqref{fml:AM3-high-sol} or \eqref{fml:midAB2-high-sol}}\;
		$\mathrm{ERR}$
		\CommentSty{The error estimator, computed via \eqref{fml:AM3-err} or \eqref{fml:midAB2-err}}\;
		\If{$\mathrm{ERR}\le\mathrm{TOL}$ or $\tilde \dt_{n+1}\le\dt_{min}$}{
			\CommentSty{Copy history, and time advances}\;
			$\dt_{n+1}\gets\tilde\dt_{n+1}$\; 
			$t_{n+1} \gets t_{n}+\dt_{n+1}$\;
			$u^{n+1}\gets \tilde u^{n+1}$\;
			$n\gets n+1$\;
		}
		$\tilde \dt_{n+1}$ \CommentSty{Recompute using \eqref{fml:time-step} and \eqref{fml:time-step-med}}\;
	}
	\caption{Solvers. Since BDF2 and LBDF2 require two previous solutions, the backward Euler scheme or its semi--implicit version is used for the first time--step, respectively.}
	\label{alg:solver}
\end{algorithm}
 
\section{Numerical experiments}
\label{sec:numerics}

\subsection{Benchmark problems}
\label{sec:problems}

	Our numerical experiments can be divided into two parts: in the first part (Section \ref{sec:PAGDperform}), the performances of the PGD and PAGD are compared, while in the second part (Section \ref{sec:scheme-comparision}), comparisons of fully implicit schemes and semi--implicit schemes are made. For such comparisons, we have chosen five benchmark problems, where we measure the computational cost taken by each of the proposed solvers for each problem (see corresponding sections for details on how to measure the cost). For the first part, we have ten combinations: five problems are numerically solved by two solvers, PGD and PAGD, respectively. For the second part, we have twenty combinations: four different numerical solvers, i.e., fully implicit MP and BDF2 (both equipped with the PAGD iterative solver), and semi--implicit LMP and LBDF2, are used to solve five benchmark problems. Three of the problems are evolutions according to the FCH model with different combinations of initial condition and parameters of the model, which we refer to as FCH1, FCH2, and FCH3, respectively, and the other two are according to the PFC model.

	The details of the initial conditions of FCH1---3 are found in \citep[section 5.2 (FCH1), 5.3 (FCH2), and 5.5.1 (FCH3)]{zhang2020benchmark} and the significance of evolutions similar to FCH2 and FCH3 is studied in \citep{Doelman2014}. The plots of these three are displayed in Figure \ref{figsub:FCH1-1} (FCH1), Figure \ref{figsub:FCH2-1} (FCH2), and Figure \ref{figsub:FCH3-1} (FCH3).
	
	The other two benchmark problems are simulations of crack propagation in a crystal strip system and that of crystal growth in a supercooled liquid using the PFC model for both, which we refer to as PFC1 and PFC2, respectively. Similar computations are common in the literature, since they vividly illustrate interesting physical phenomena and highlight the versatility of the model: by tuning some model parameters this single model is able to describe many different experimentally observed states and transformations. For more discussions of these simulations, see \citep{elder2002, elder2004, gomez2012PFCnumerical, provatas2007, emmerichPFC, provatas10}. 
	
	In the following two paragraphs, we detail the initial conditions of PFC1---2 since they are not exactly the same as in the literature that they are based on. The initial condition of PFC1 (see Figure \ref{figsub:PFC1-1} for its plot) is set by 
\begin{equation}
	\begin{split}
		u_0(x, y)=\phi_{s} &\cdot \psi(x, y)+\phi_{\ell}(1-\psi(x, y))
		\\
		&+A \cdot \psi(x, y) \cdot\left[\cos \left(q_{t} \frac{x}{1.2}\right) \cos \left(\frac{q_{t} y}{\sqrt{3}}\right)-\frac{1}{2} \cos \left(\frac{2 q_{t} y}{\sqrt{3}}\right)\right],
	\end{split}
	\label{obj:IC-PFC1}
\end{equation} 
where $A$ and $q_t$ are constants needed to describe the steady state density field of the solid type (see \citep[II.  C]{elder2004}) and set to 
\begin{equation}
	A=\frac{4}{5} \phi_{s}+\frac{4}{15} \sqrt{15 \epsilon-36 \phi_{s}^{2}}, \quad q_{t}=\frac{\sqrt{3}}{2}.
\end{equation}
Following \citep[III. D. 2]{elder2004}, we set $\phi_s = 0.49$, and $\phi_\ell=0.79$. $\phi_s$ and $\phi_\ell$ represent the temporal average of the number density of atoms for the solid and liquid state of a crystal strip system in a liquid bath, respectively. The function $\psi$ is a smoothed Heavyside function defined by 
\begin{equation}
	\begin{split}
		\psi(x, y):=&\left(\frac{1}{2}-\frac{1}{2} \tanh \left(\frac{|y|-\gamma_{1}}{4}\right) \right)
		\\
		&\quad\cdot\left(\frac{1}{2}+\frac{1}{2} \tanh \left( \frac{\sqrt{\left(x-x_{0}\right)^{2}+\left(y-y_{0}\right)^{2}}-\gamma_{2}}{4} \right)\right).
	\end{split}
\end{equation}
The function $\psi$ is used to create the chip (or a hole) on the strip as well as where the liquid regions are. The parameters $x_0$ and $y_0$ determine the location of the chip, $\gamma_{2}$ how big it is, and $\gamma_{1}$ the liquid regions. 

Lastly, the initial condition for the crystal growth simulation (PFC2) used for our experiments is a miniature version of the one implemented in \citep[Section 4.1]{gomez2012PFCnumerical}. The simulation of the whole domain, i.e., the same one as in \citep{gomez2012PFCnumerical} except the locations and sizes of crystallites, is reproduced in Figures \ref{figsub:PFC2big-1}--\ref{figsub:PFC2big-5}, and the PFC2 simulation considered here is displayed in Figures \ref{figsub:PFC2-1}--\ref{figsub:PFC2-4}. The detailed setting of PFC2 is the same as in \citep[Section 4.1]{gomez2012PFCnumerical} except that the spatial domain and the final time are reduced to $[-L/2,L/2]^2$ with $L=200$ and $T=300$, respectively. Also, to smooth out the initial condition, it is filtered using a Gaussian filtering that is used in \citep[p.~15]{zhang2020benchmark}, which is used also for FCH2---3. In the case of PFC2, however, an eight--times--finer resolution is used $\hat N = 8N$ while $\hat N = 2N$ for FCH2---3, where $N$ is the original resolution set for the main experiment. See \citep[p.~15]{zhang2020benchmark} for the details about the filtering.

The snap shots of the evolutions displayed in Figure \ref{fig:evol1}---\ref{fig:evol5}
are generated using the best performing scheme in terms of CPU time when applied to the experiment conducted in Section \ref{sec:scheme-comparision}. However, all solvers produce visually the same evolution with their differences detected only through numerical errors. 

To help put things into perspective about overall range of parameters in pursuit of accuracy (e.g., time step sizes, time stepping tolerance, etc.), Figure \ref{fig:FCH3CmpByResln} illustrates how a milder time stepping restriction (BDF2 with stepping tolerance $1.3\times10^{-4}$) leads to a different evolution in FCH3 than the one achieving the 5--digit objective (BDF2 with stepping tolerance $10^{-6}$; see Section \ref{sec:scheme-comparision} for details on 5--digit objective): the connectivity of the level curves of the solutions at $t=18$ is different.  When the tolerance is set to $10^{-6}$, the maximum time step size that is actually used by the algorithm is 0.1028 while when the tolerance is $1.3\times10^{-4}$, it hits the maximum time step size 0.5. And it becomes as large as 0.5482 when the maximum time step size is to 1. If the time stepping tolerance is slightly larger, say $1.5\times10^{-4}$ (with the maximum time step size being 1), near $t=10.1561$, our algorithm does not reach the iteration tolerance before the maximum number of iterations, which is set to 1000.

\begin{figure}
	\centering
	\subfloat[t=0]{\label{figsub:FCH1-1} \includegraphics[width=0.5\linewidth]{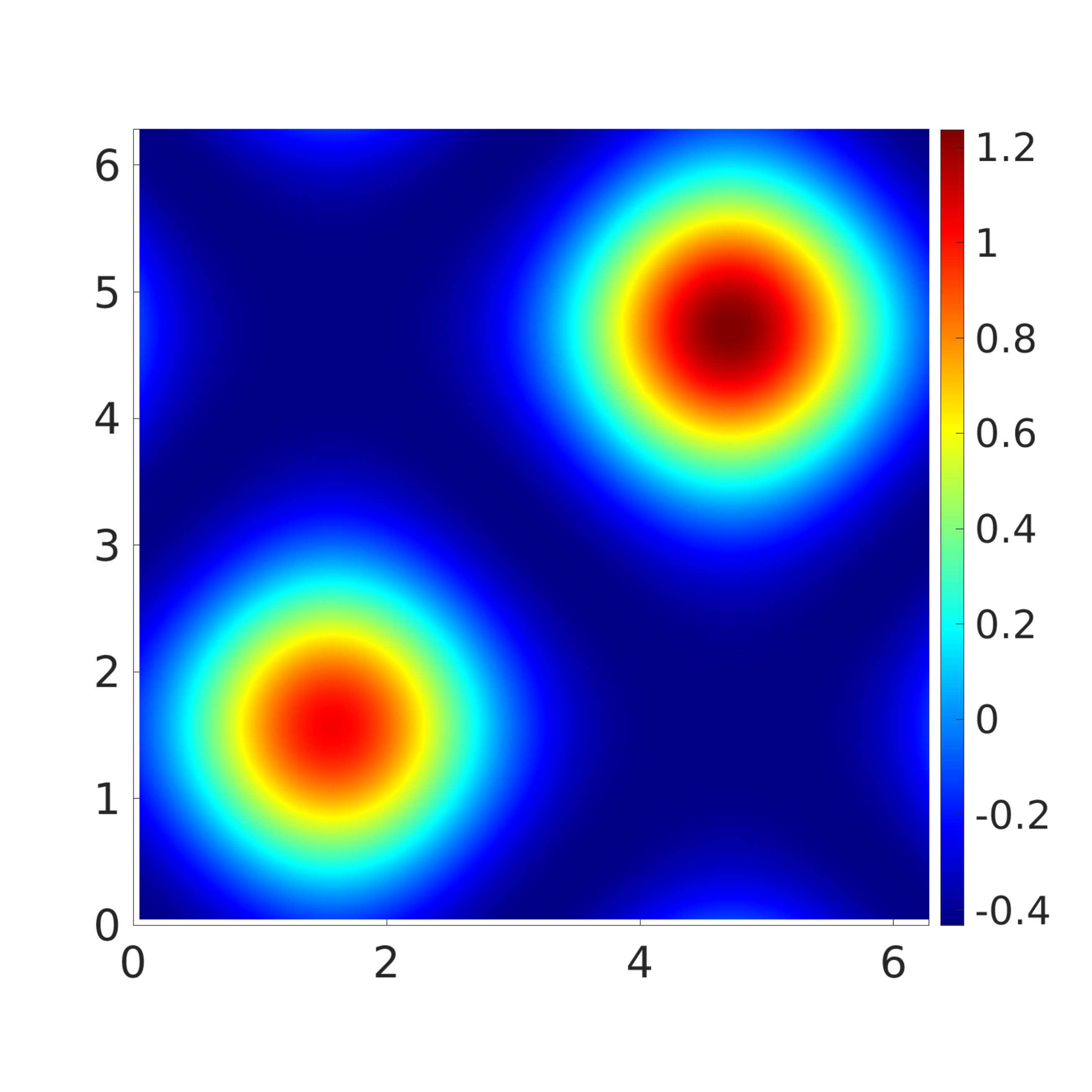}} 
	\subfloat[t=2]{\label{figsub:FCH1-2} \includegraphics[width=0.5\linewidth]{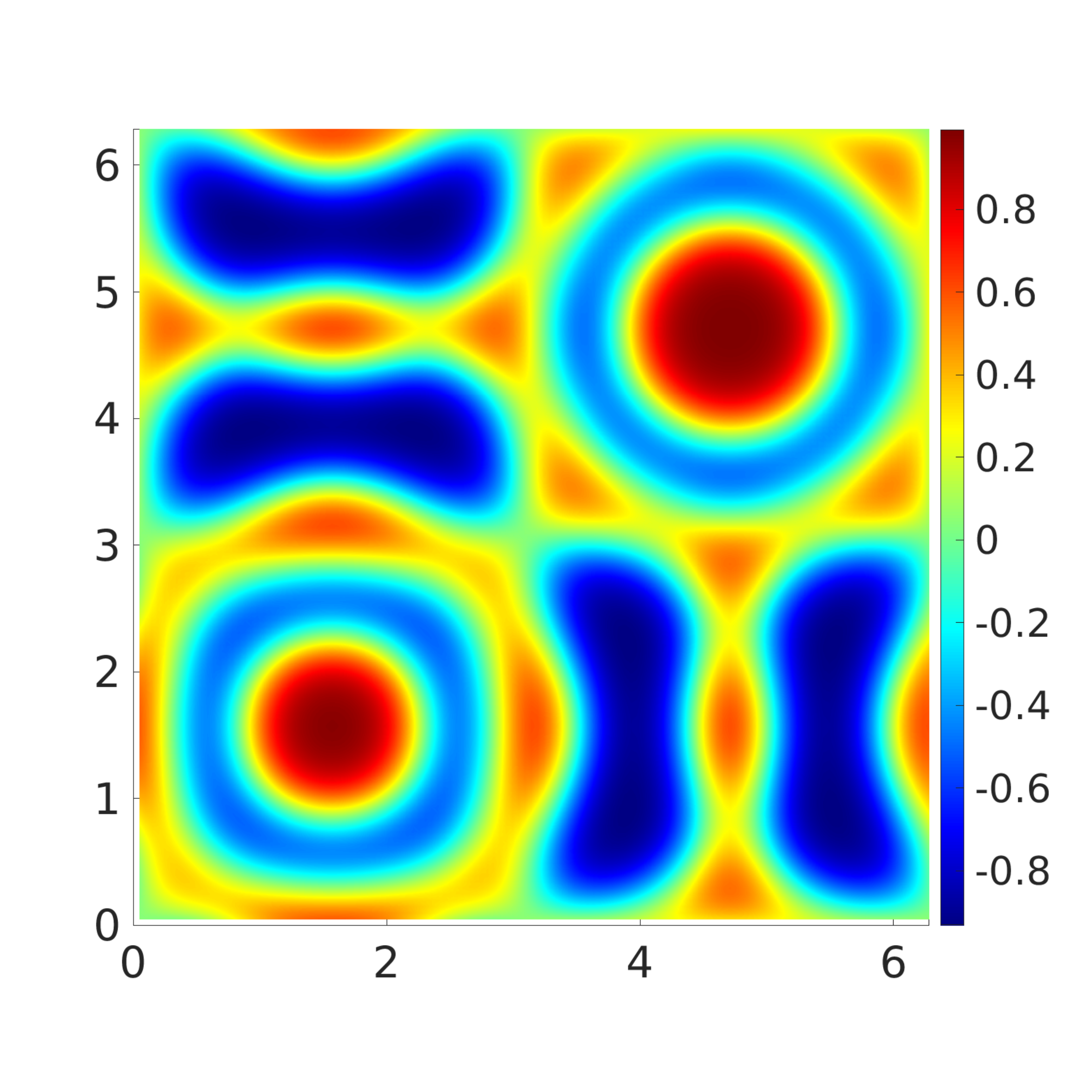}}
	\\
	\subfloat[t=10]{\label{figsub:FCH1-3} \includegraphics[width=0.5\linewidth]{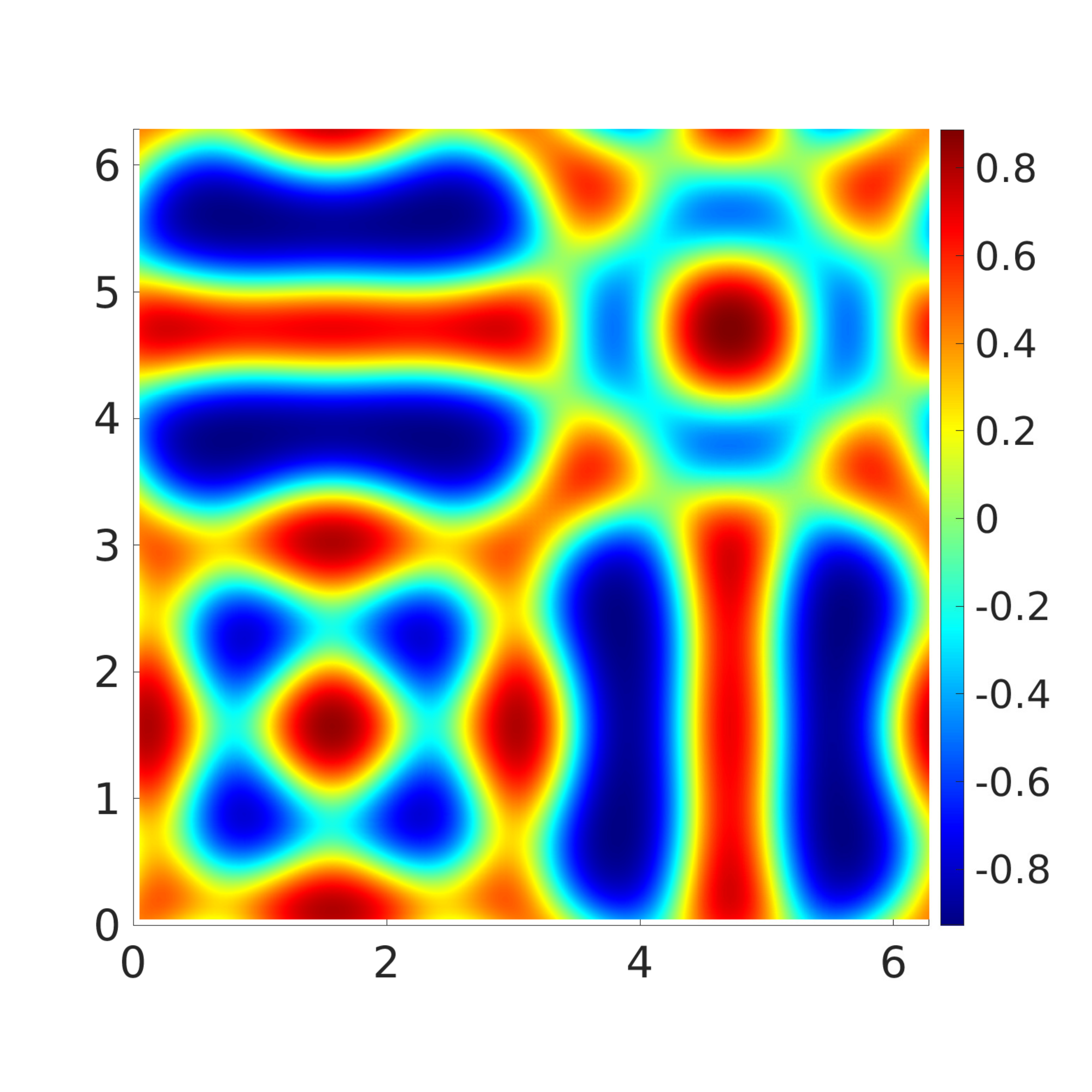}} 
	\subfloat[t=100]{\label{figsub:FCH1-4} \includegraphics[width=0.5\linewidth]{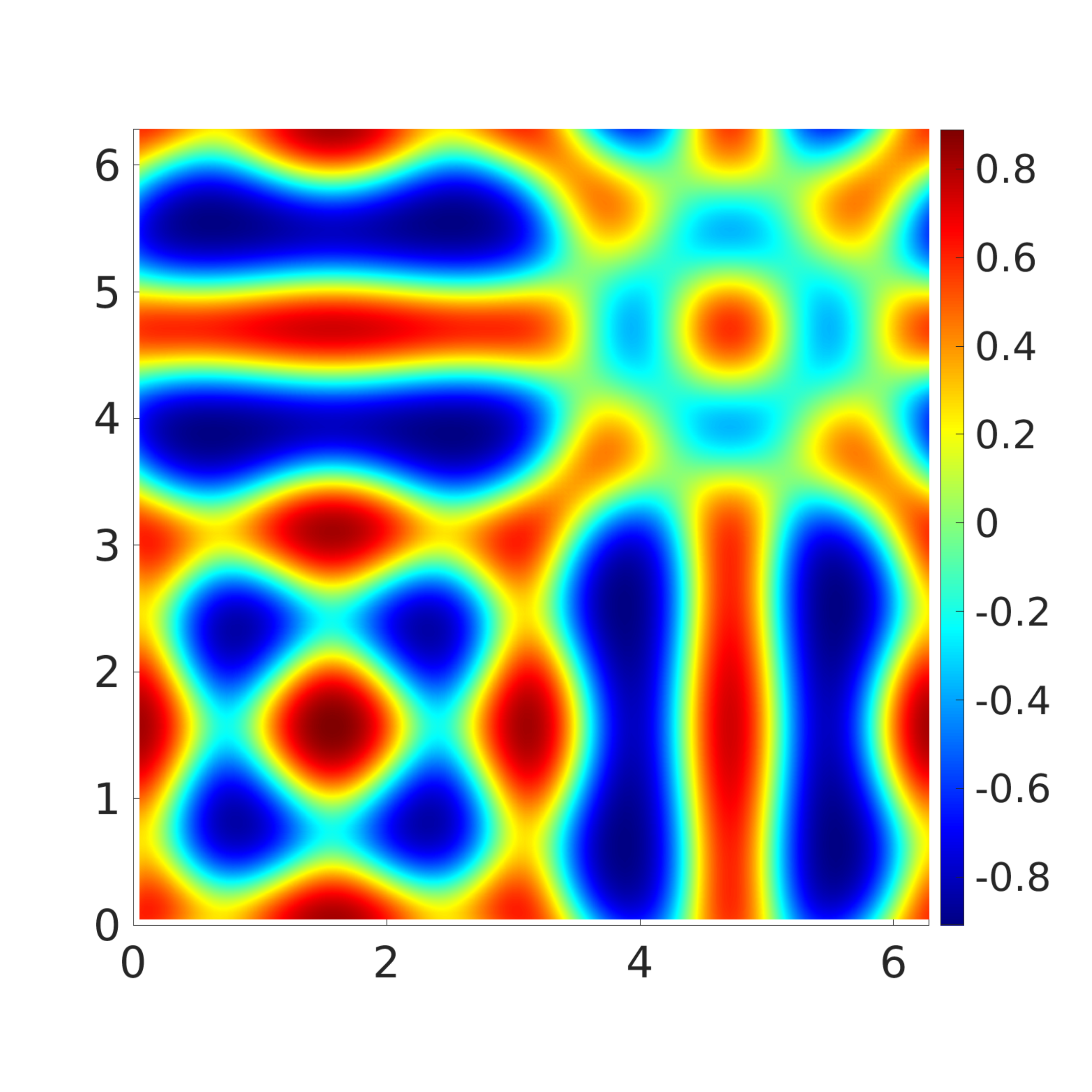}}
	\caption{FCH1 evolution.}
	\label{fig:evol1}
\end{figure}

\begin{figure}
	\centering
	\subfloat[t=0]{\label{figsub:FCH2-1} \includegraphics[width=0.5\linewidth]{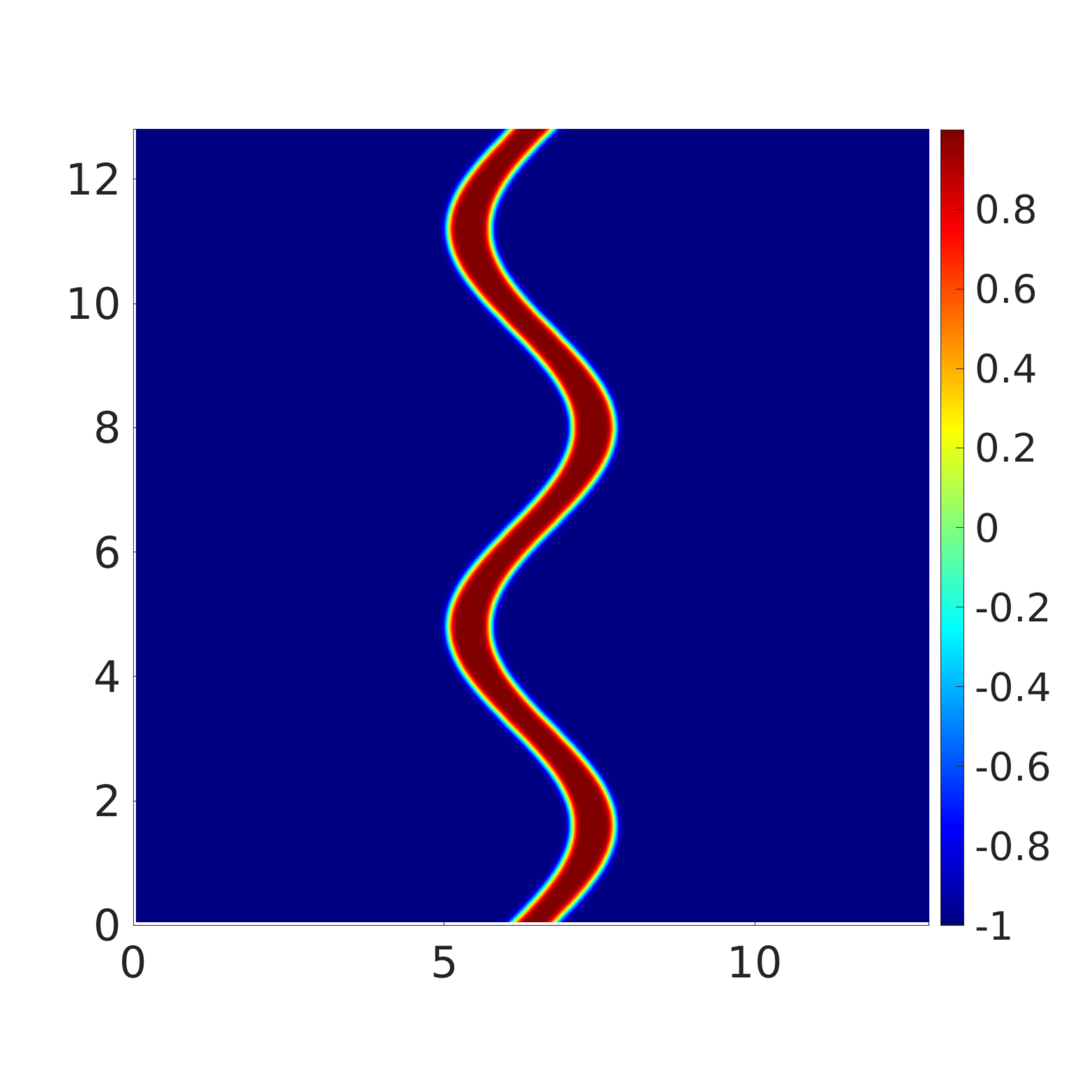}} 
	\subfloat[t=2]{\label{figsub:FCH2-2} \includegraphics[width=0.5\linewidth]{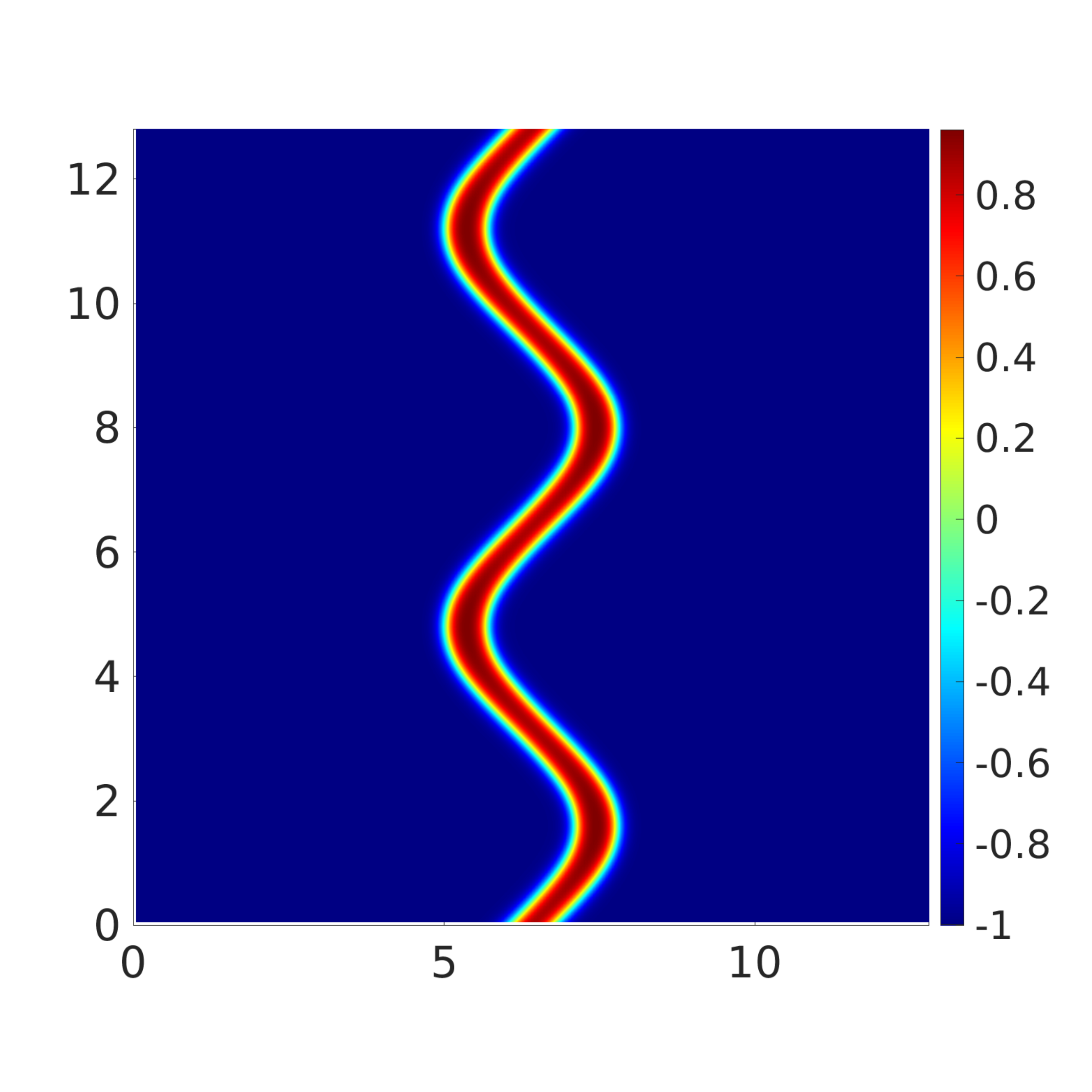}}
	\\
	\subfloat[t=10]{\label{figsub:FCH2-3} \includegraphics[width=0.5\linewidth]{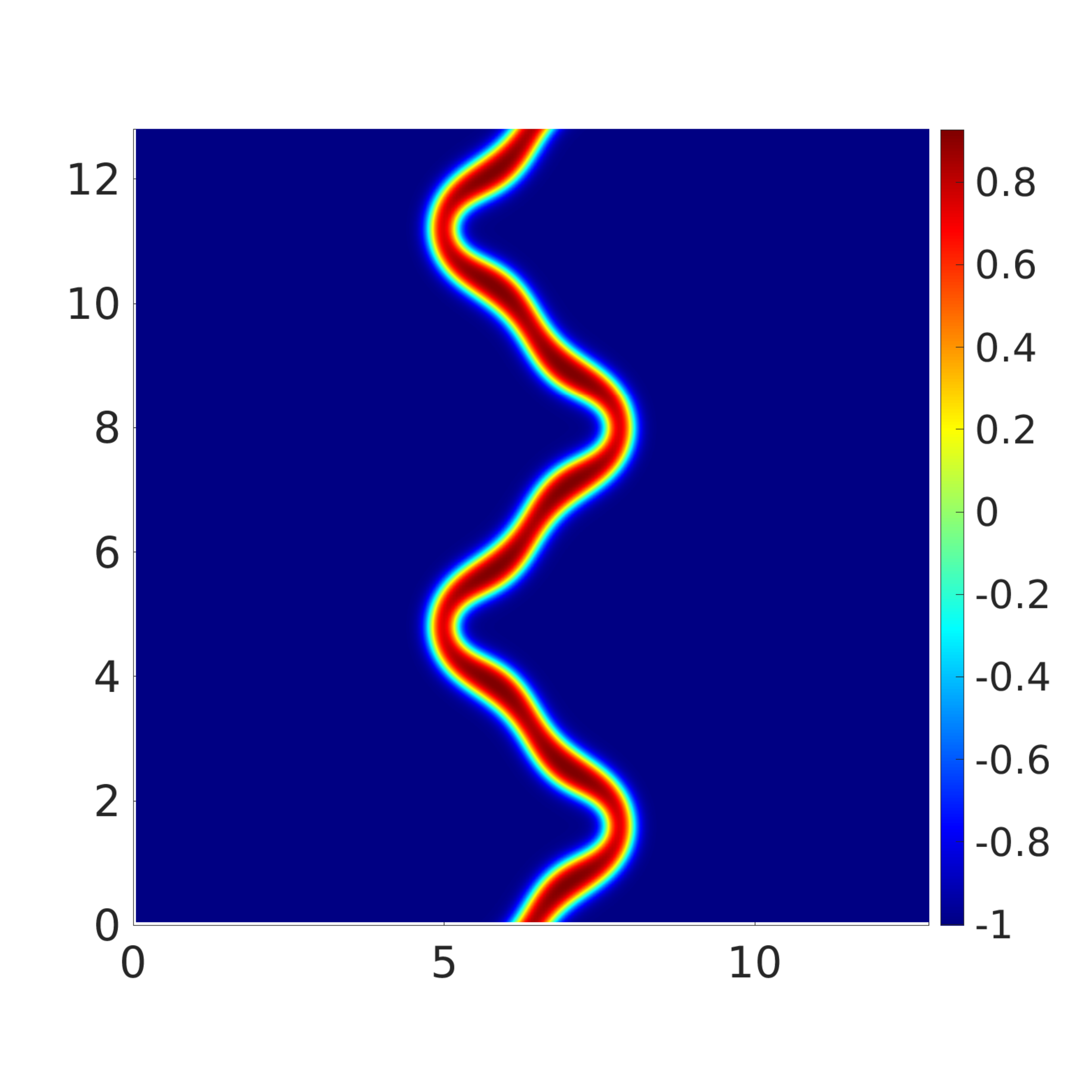}} 
	\subfloat[t=100]{\label{figsub:FCH2-4} \includegraphics[width=0.5\linewidth]{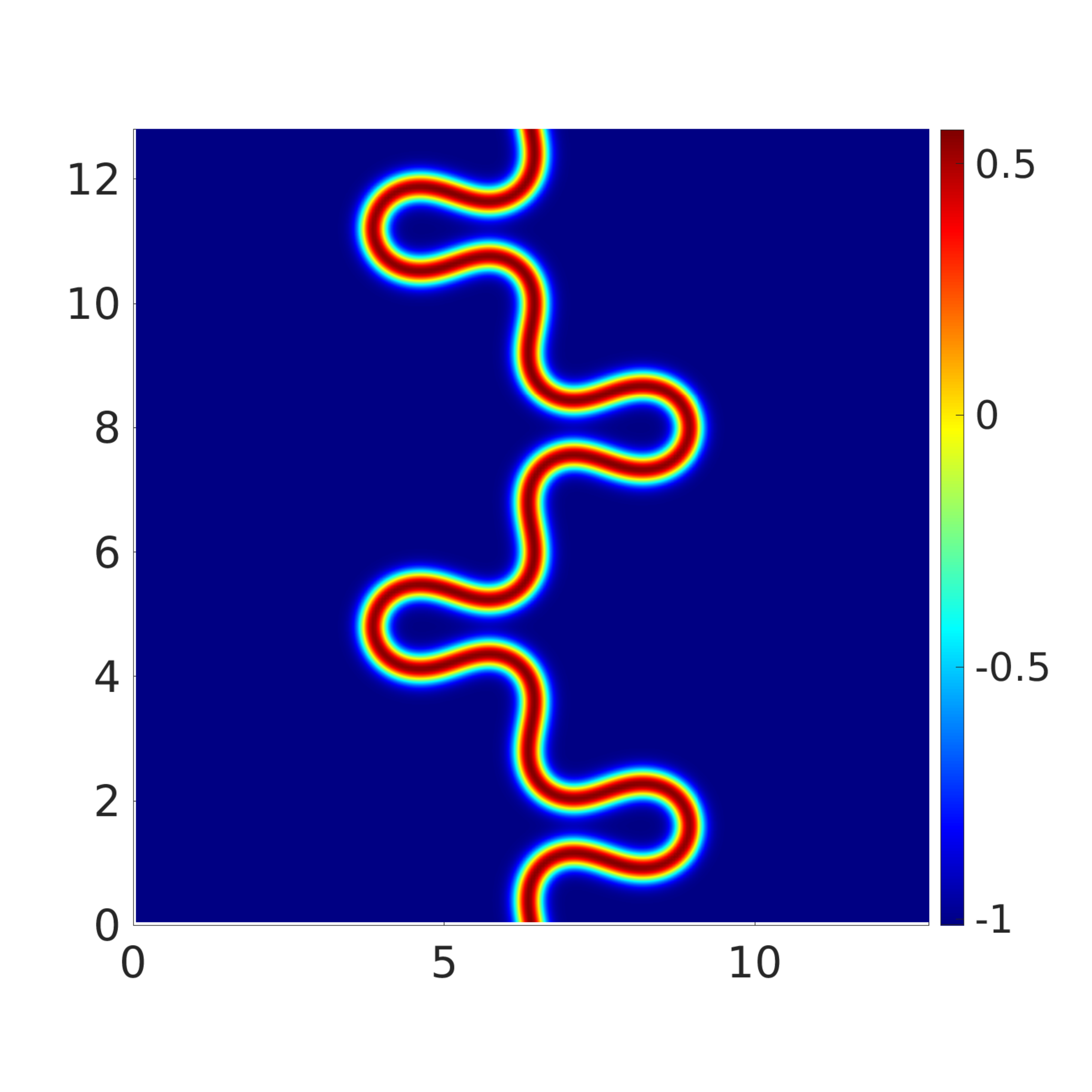}}
	\caption{FCH2 evolution.}
	\label{fig:evol2}
\end{figure}

\begin{figure}
	\centering
	\subfloat[t=0]{\label{figsub:FCH3-1} \includegraphics[width=0.5\linewidth]{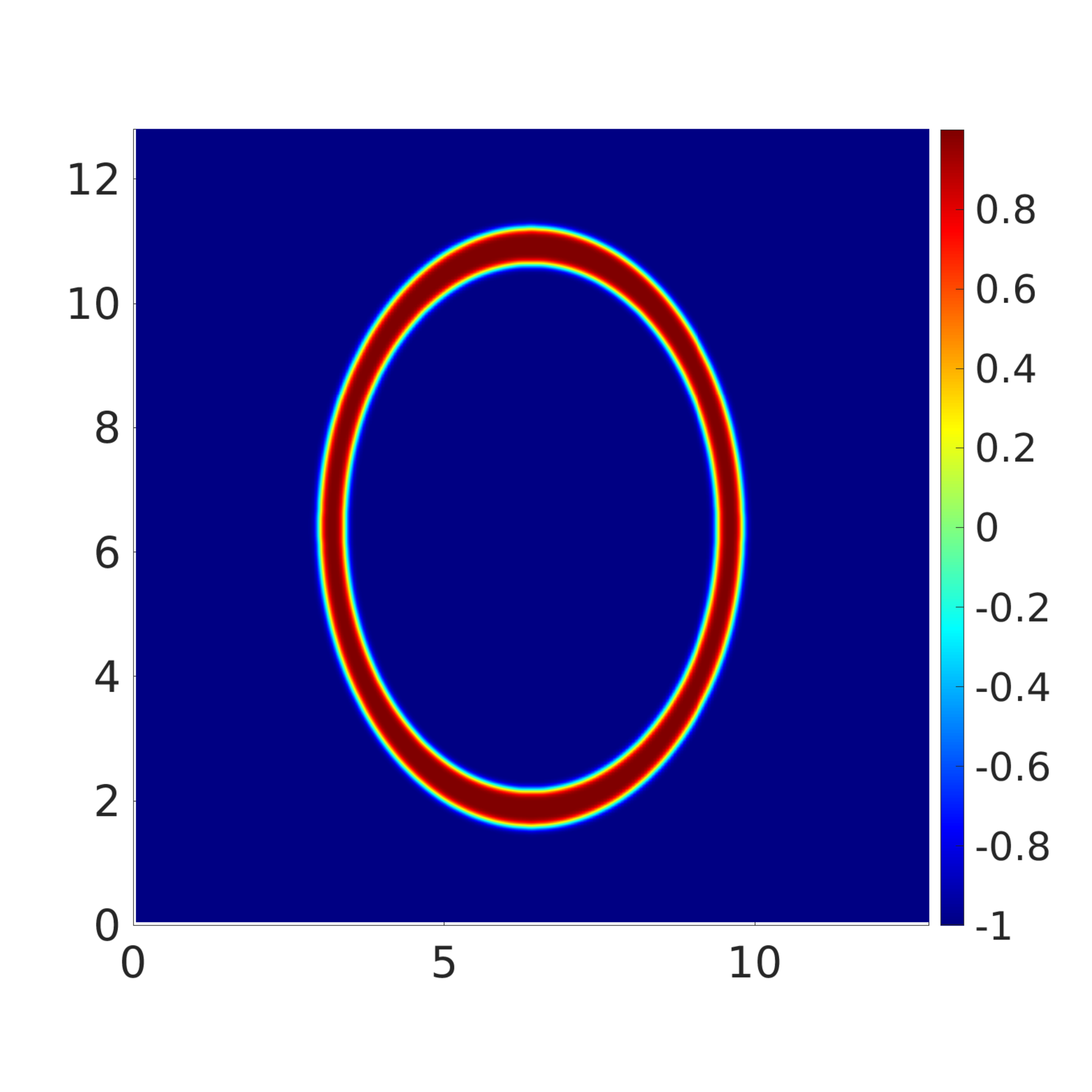}} 
	\subfloat[t=2]{\label{figsub:FCH3-2} \includegraphics[width=0.5\linewidth]{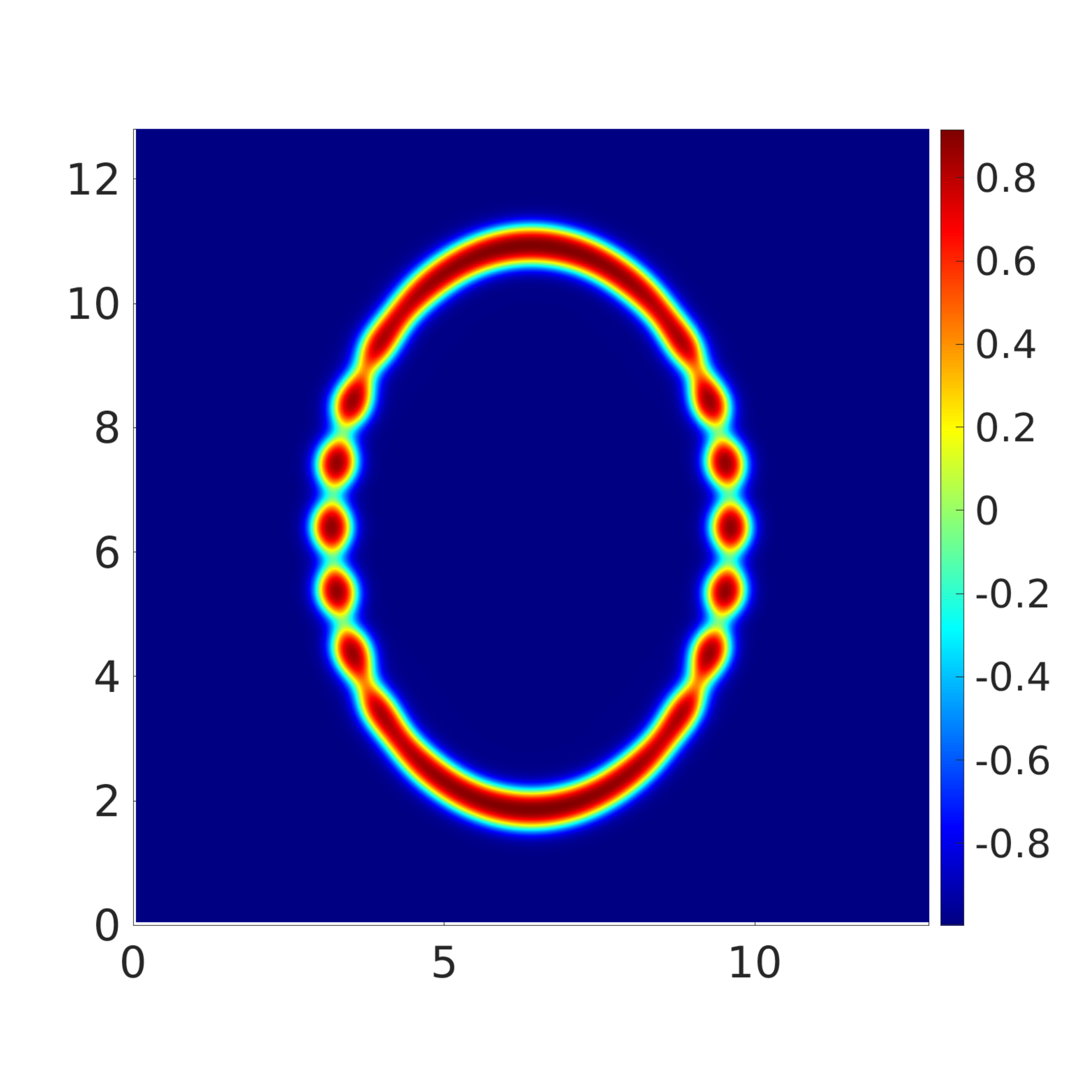}}
	\\
	\subfloat[t=10]{\label{figsub:FCH3-3} \includegraphics[width=0.5\linewidth]{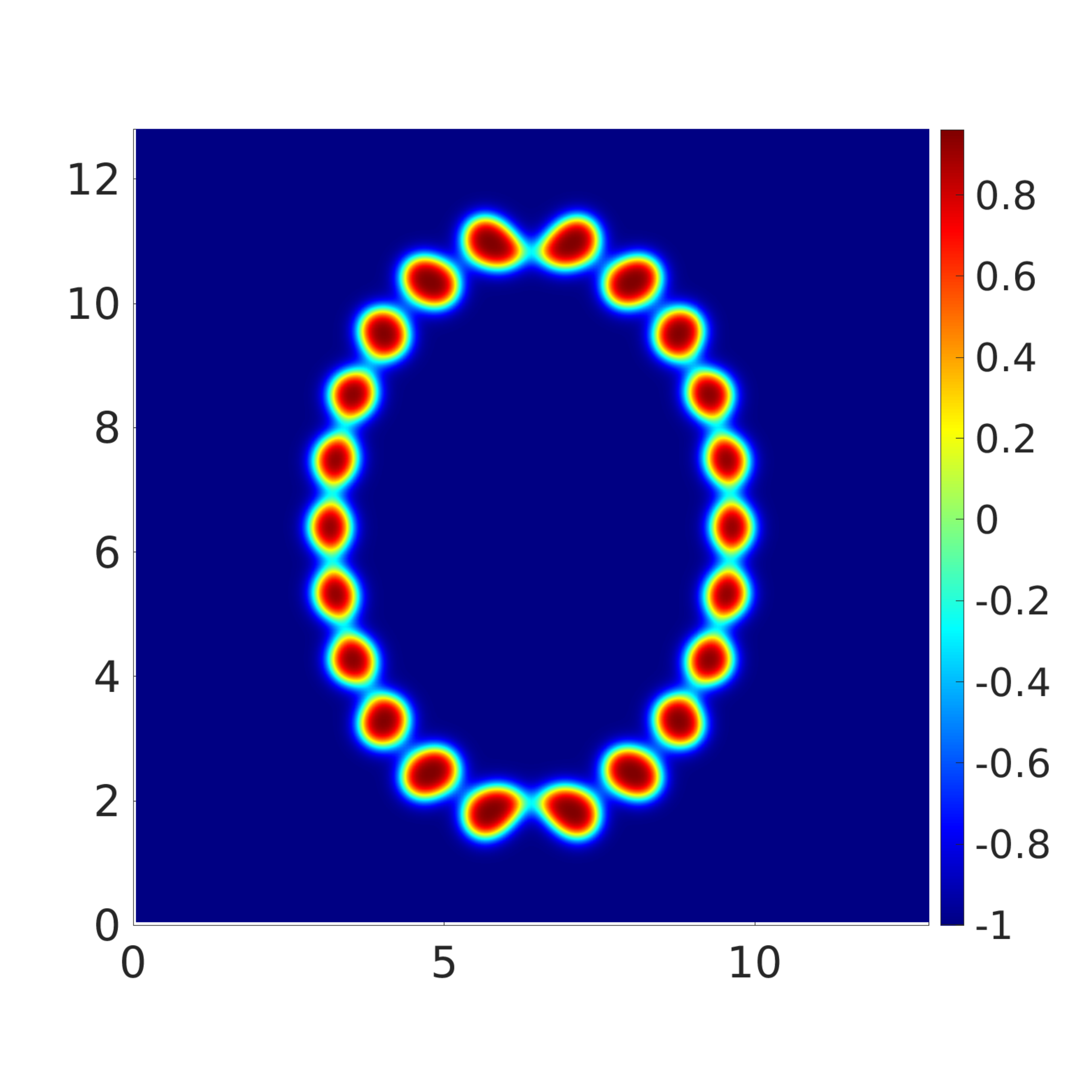}} 
	\subfloat[t=100]{\label{figsub:FCH3-4} \includegraphics[width=0.5\linewidth]{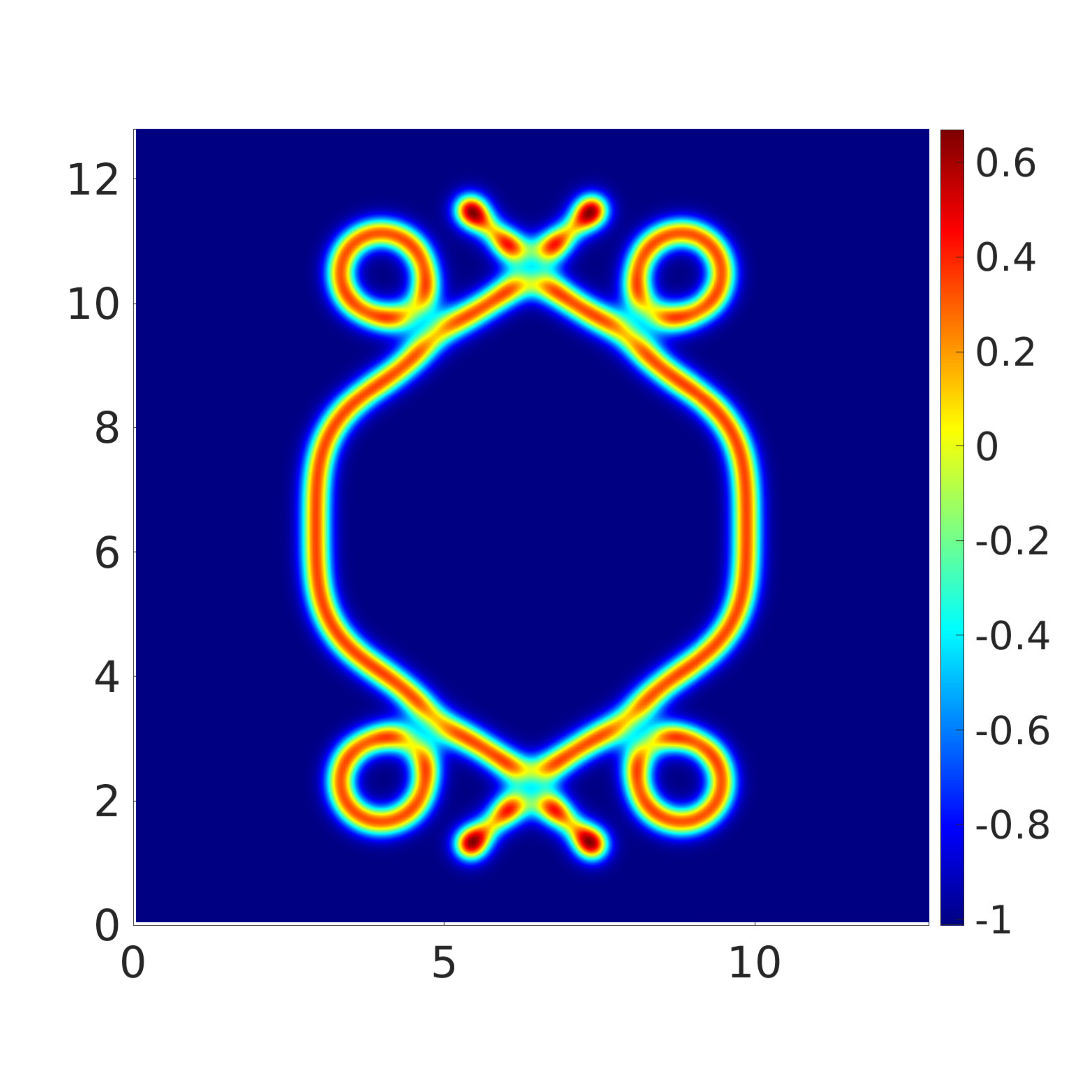}}
	\caption{FCH3 evolution.}
	\label{fig:evol3}
\end{figure}

\begin{figure}
	\centering
	\subfloat[t=0]{\label{figsub:PFC1-1} \includegraphics[width=0.44\linewidth]{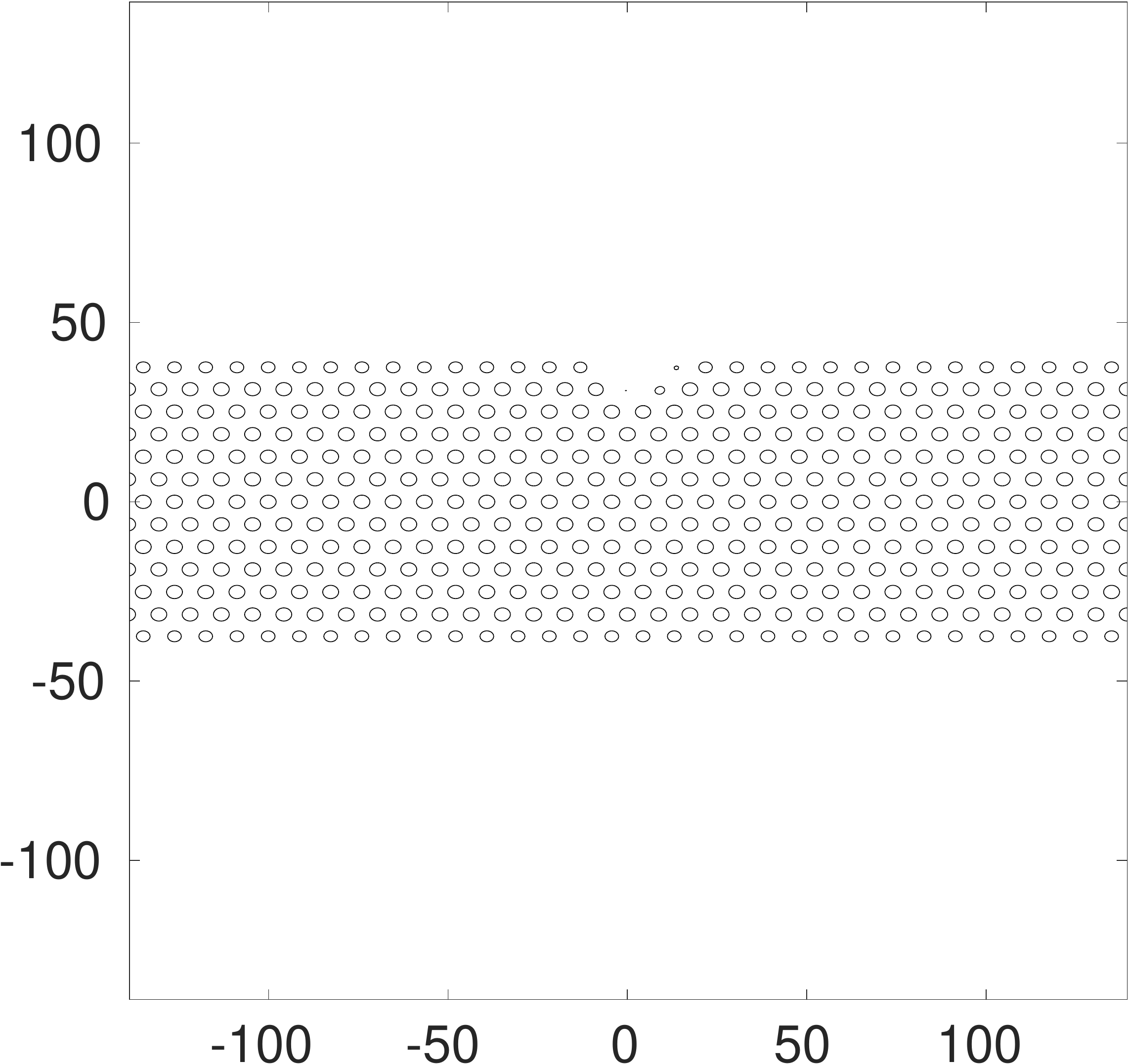}} 
	\subfloat[t=200]{\label{figsub:PFC1-2} \includegraphics[width=0.44\linewidth]{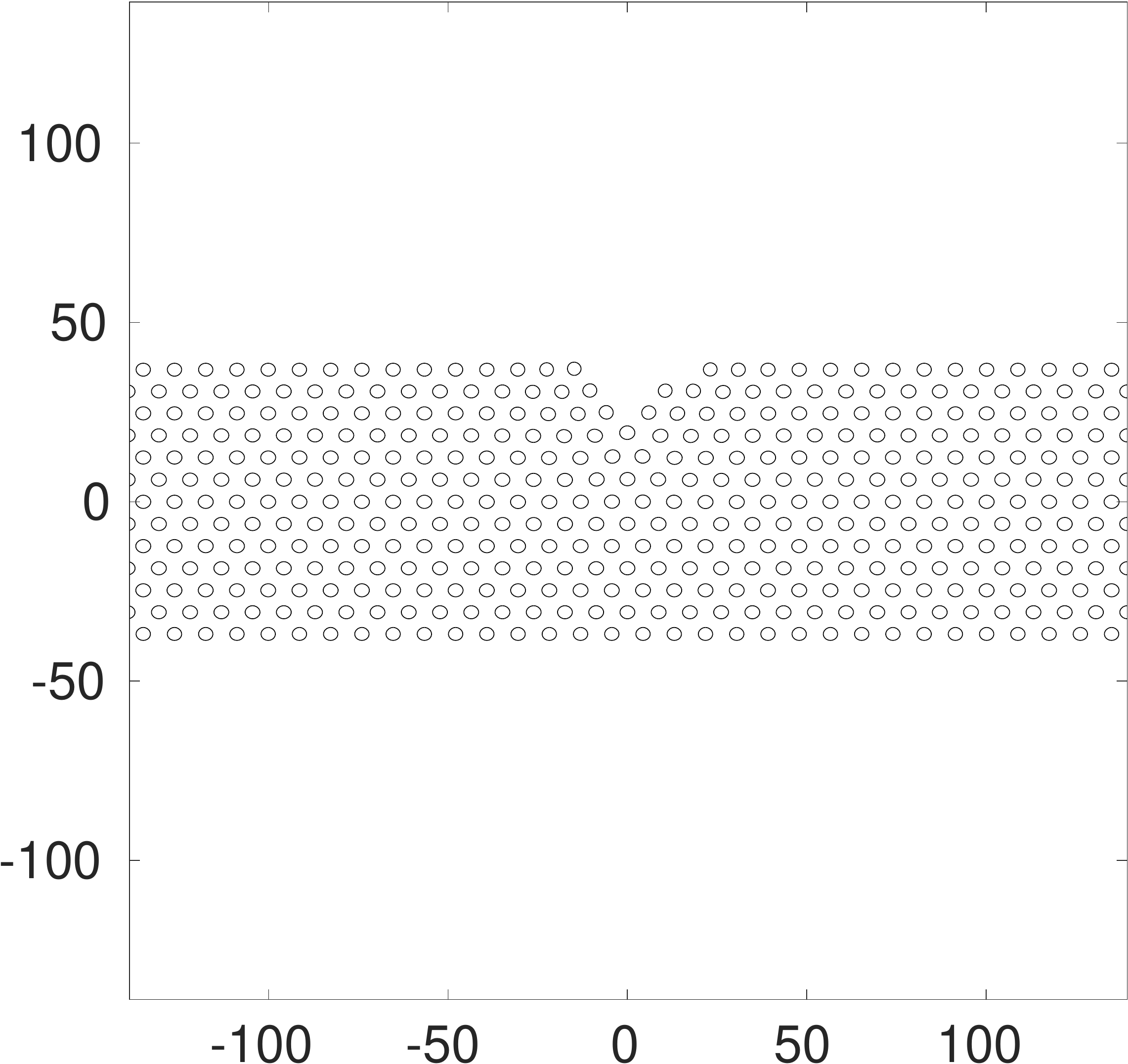}}
	\\
	\subfloat[t=1000]{\label{figsub:PFC1-3} \includegraphics[width=0.44\linewidth]{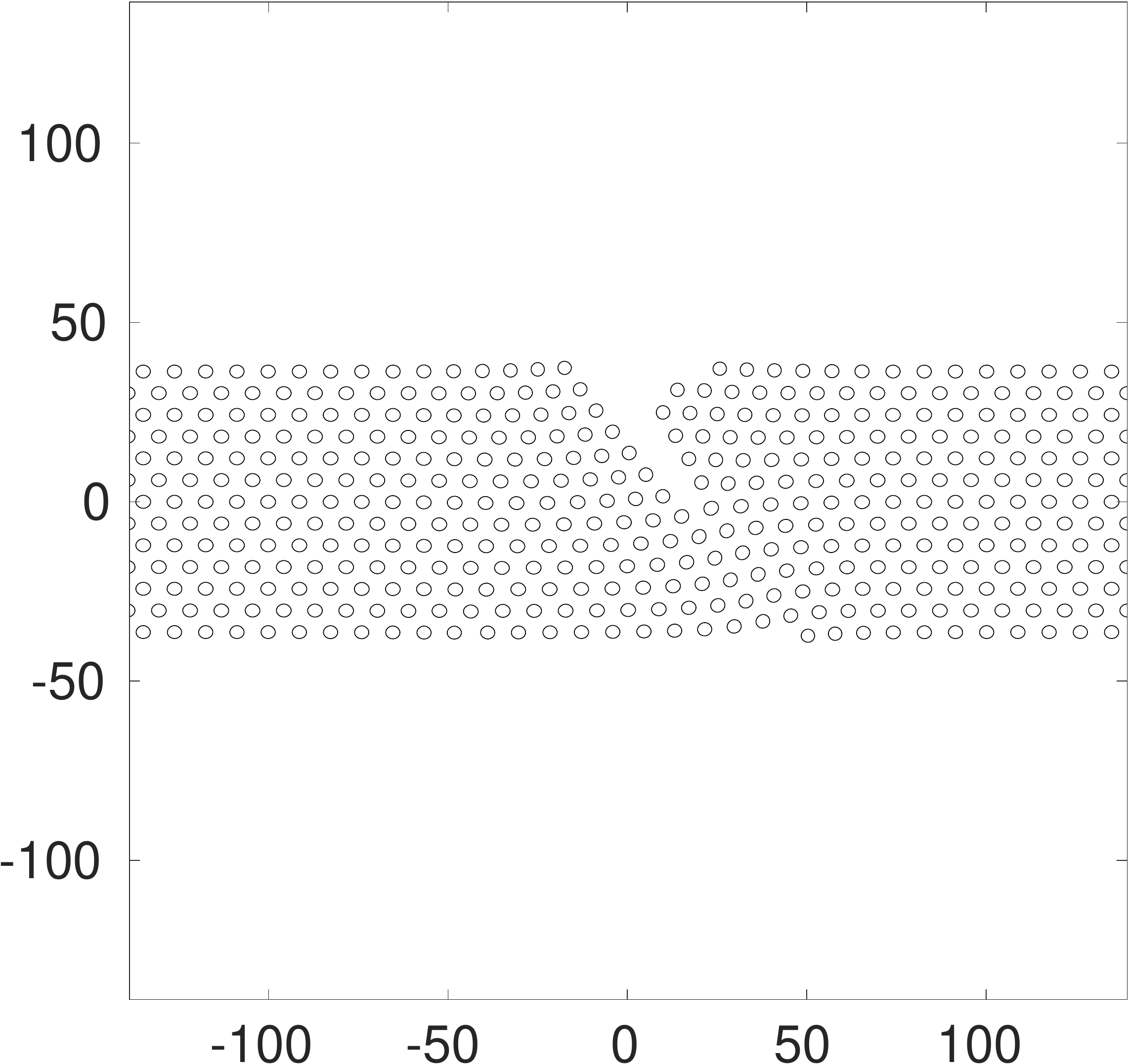}} 
	\subfloat[t=3000]{\label{figsub:PFC1-4} \includegraphics[width=0.44\linewidth]{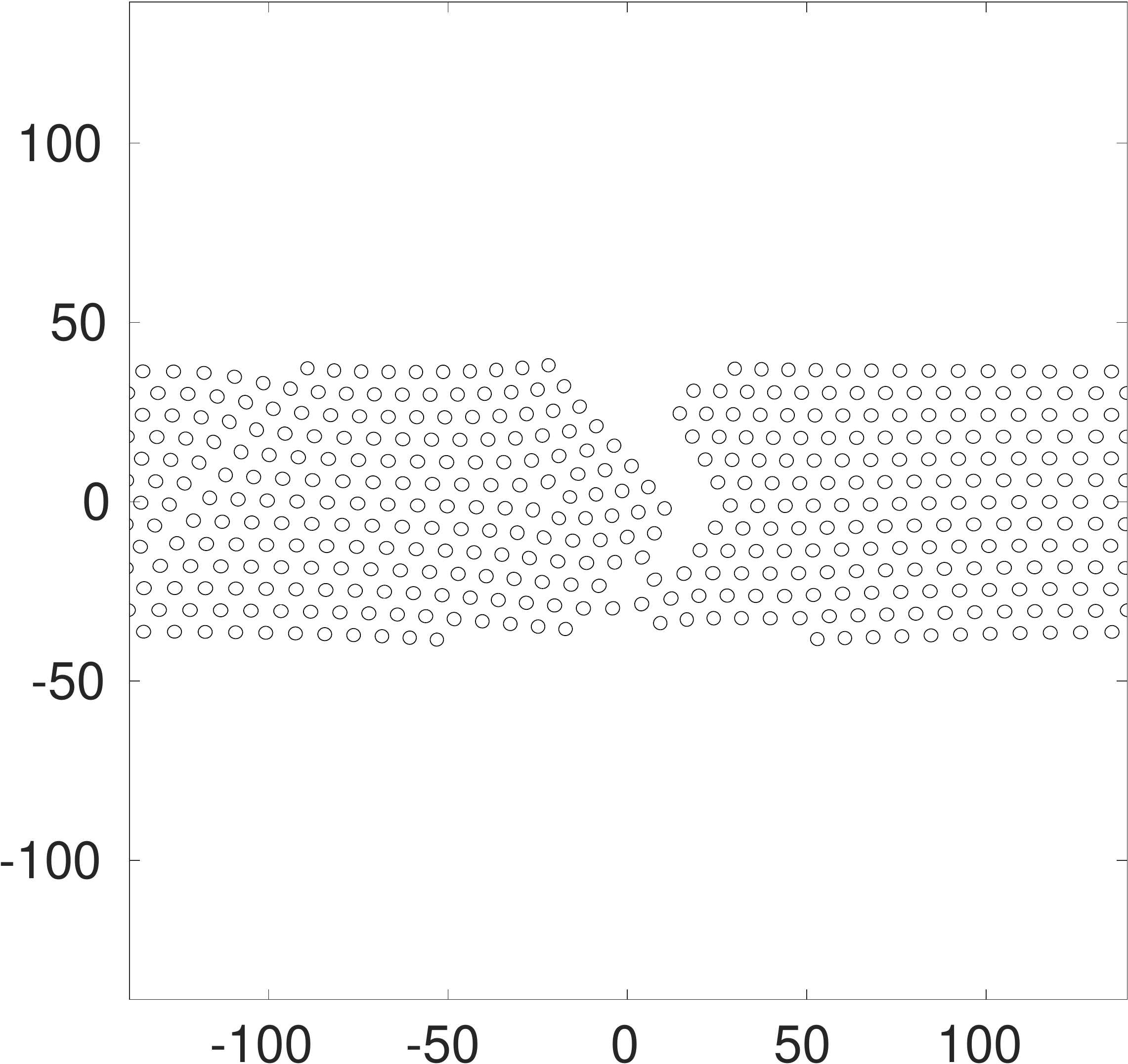}}
	\\
	\subfloat[t=10000]{\label{figsub:PFC1-5} \includegraphics[width=0.44\linewidth]{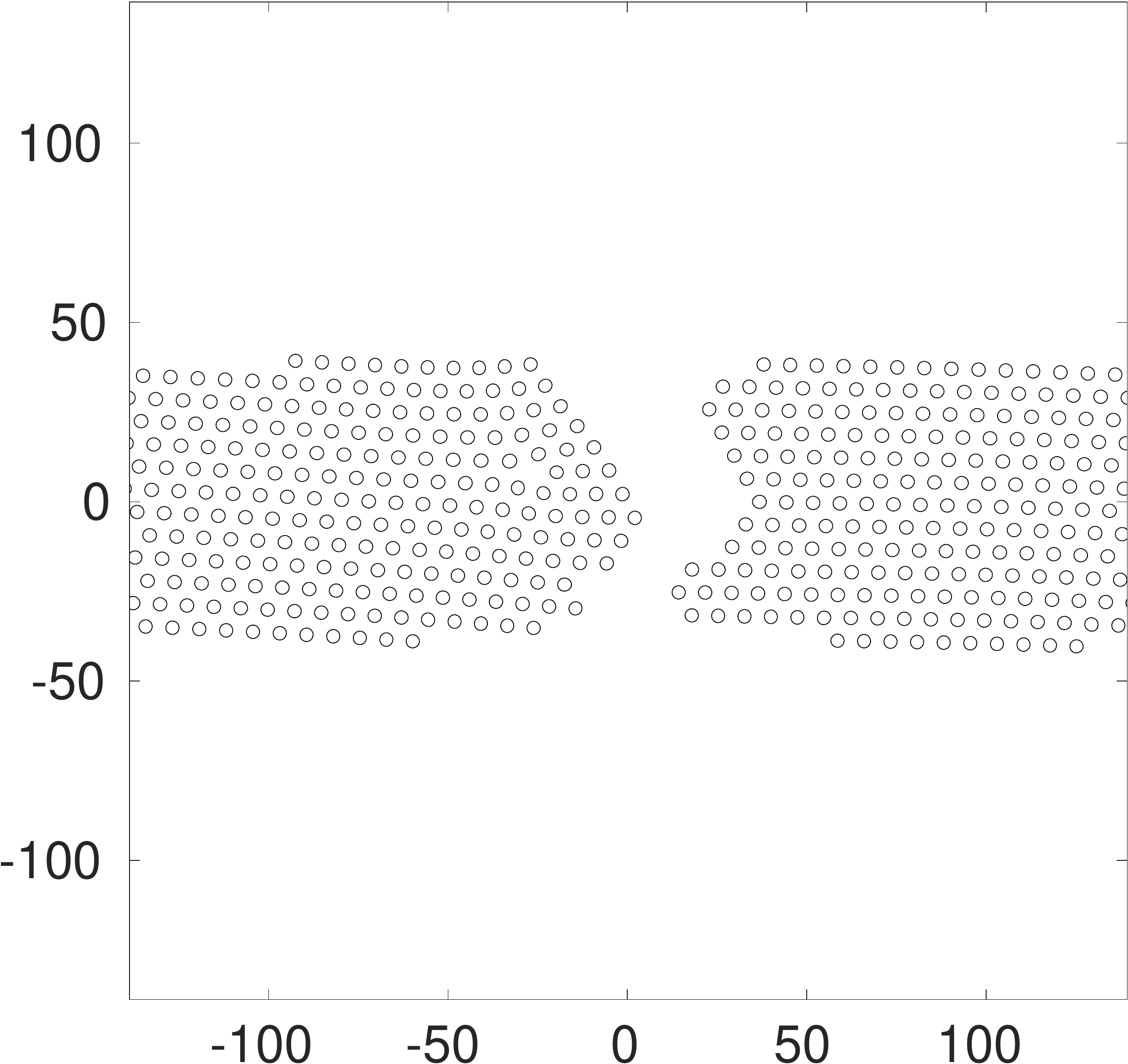}} 
	\caption{PFC1 evolution: zero level curves of the phase variable.}
	\label{fig:evol4}
\end{figure}

\begin{figure}
	\centering
	\subfloat[t=0]{\label{figsub:PFC2-1} \includegraphics[width=0.5\linewidth]{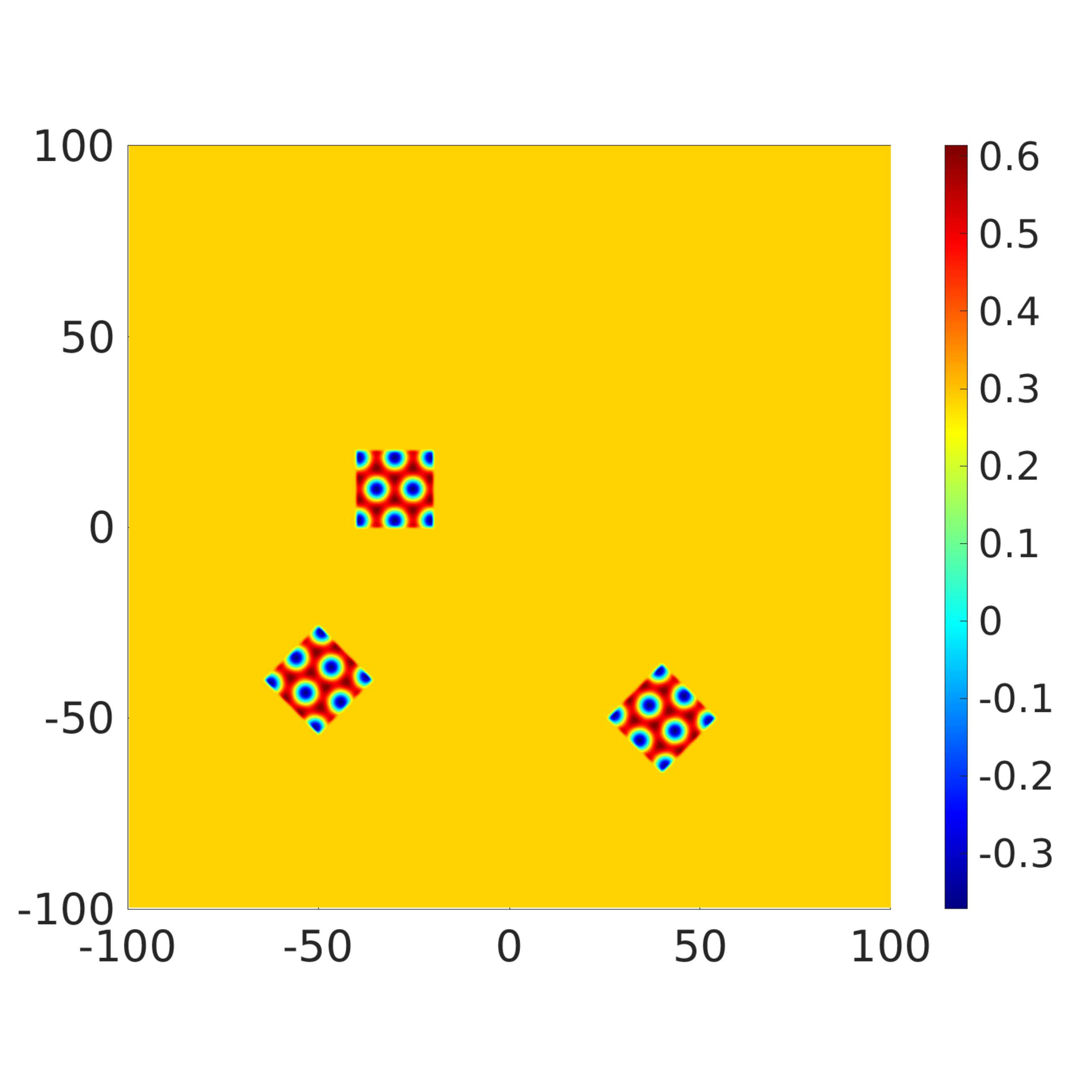}} 
\subfloat[t=12]{\label{figsub:PFC2-2} \includegraphics[width=0.5\linewidth]{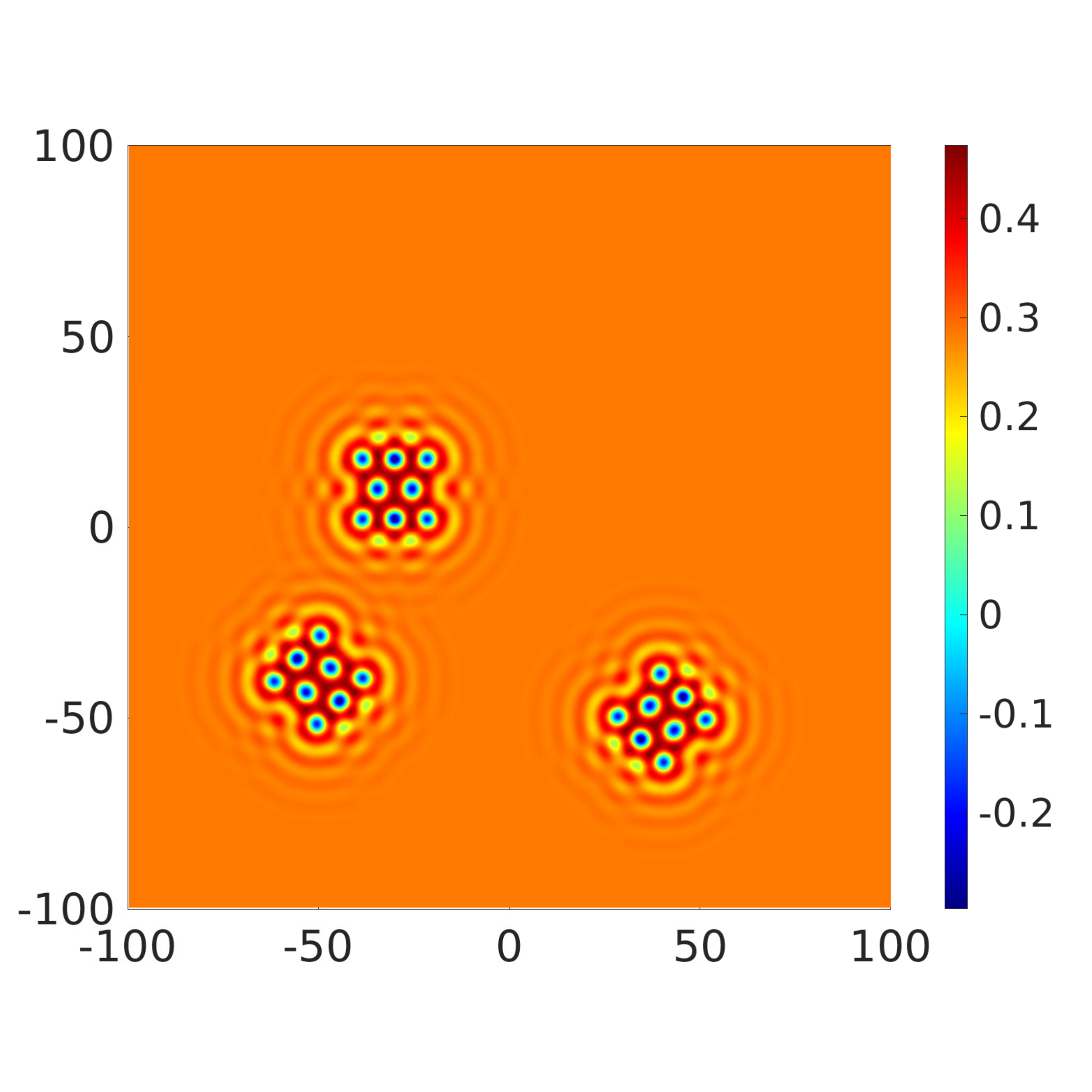}}
\\
\subfloat[t=30]{\label{figsub:PFC2-3} \includegraphics[width=0.5\linewidth]{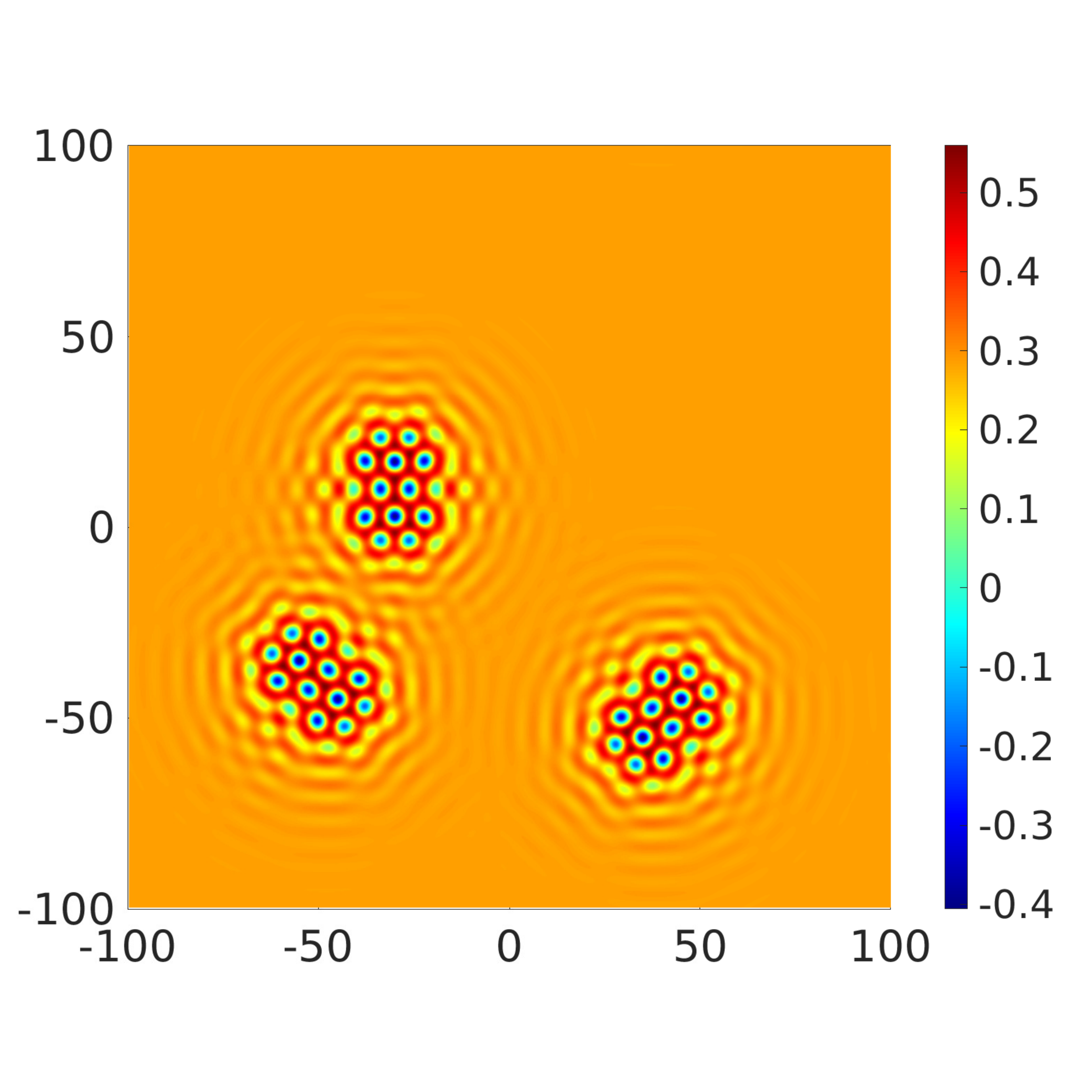}} 
\subfloat[t=300]{\label{figsub:PFC2-4} \includegraphics[width=0.5\linewidth]{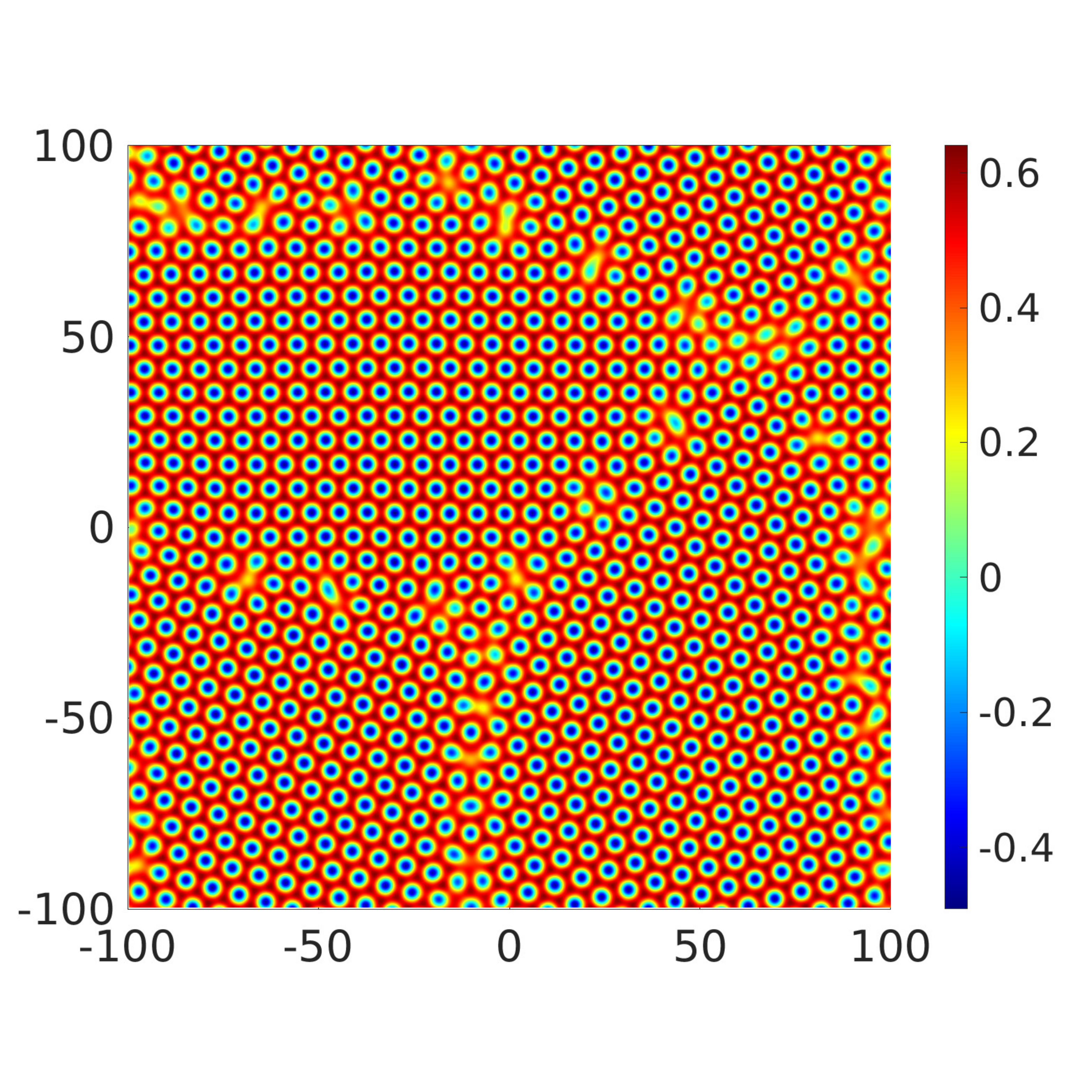}}
\caption{PFC2 evolution.}
\label{fig:evol5}
\end{figure}

\begin{figure}
	\centering
	\vspace{-5ex}
	\subfloat[t=0]{\label{figsub:PFC2big-1} \includegraphics[width=0.47\linewidth]{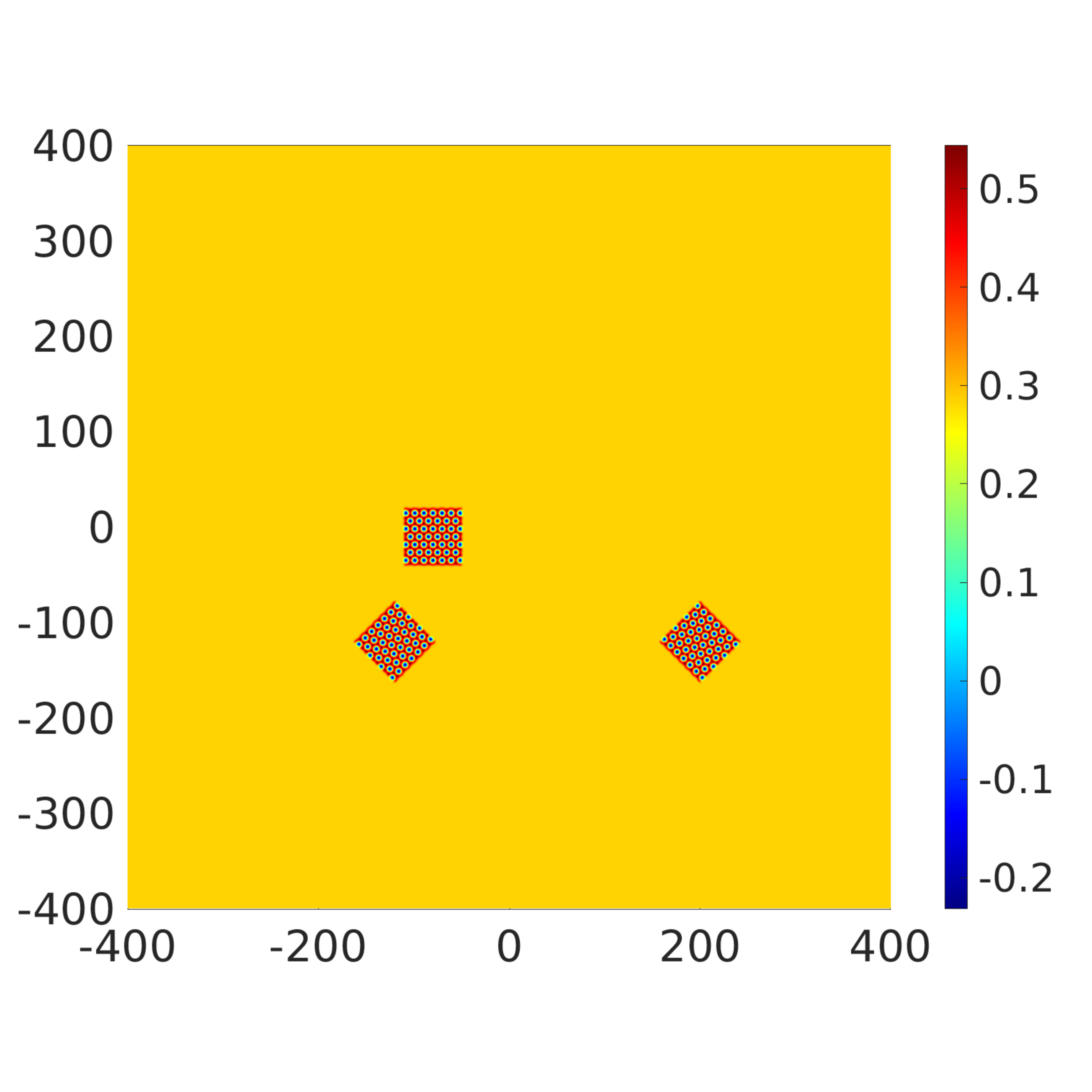}} 
\subfloat[t=20]{\label{figsub:PFC2big-2} \includegraphics[width=0.47\linewidth]{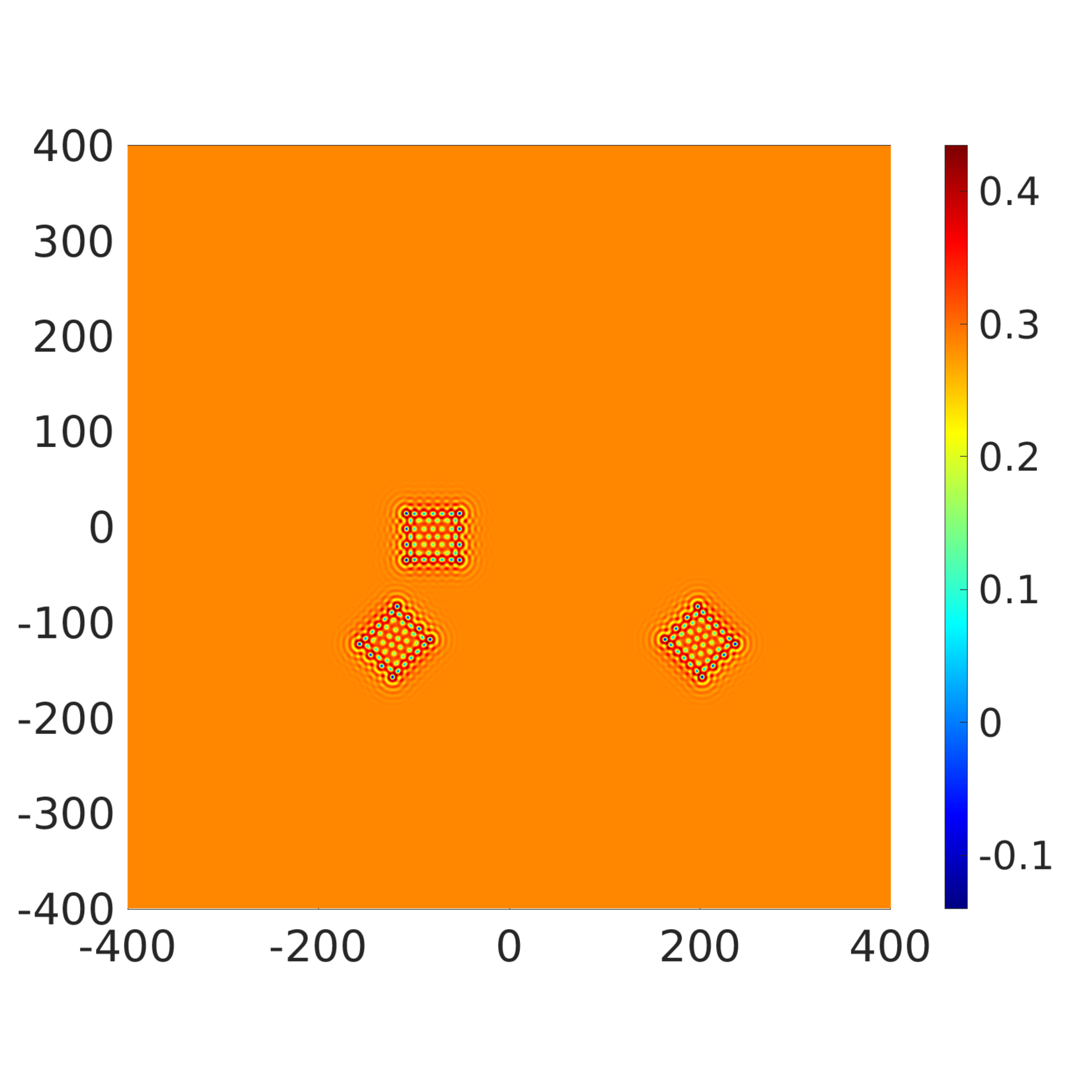}}
\\[-3ex]
\subfloat[t=150]{\label{figsub:PFC2big-3} \includegraphics[width=0.47\linewidth]{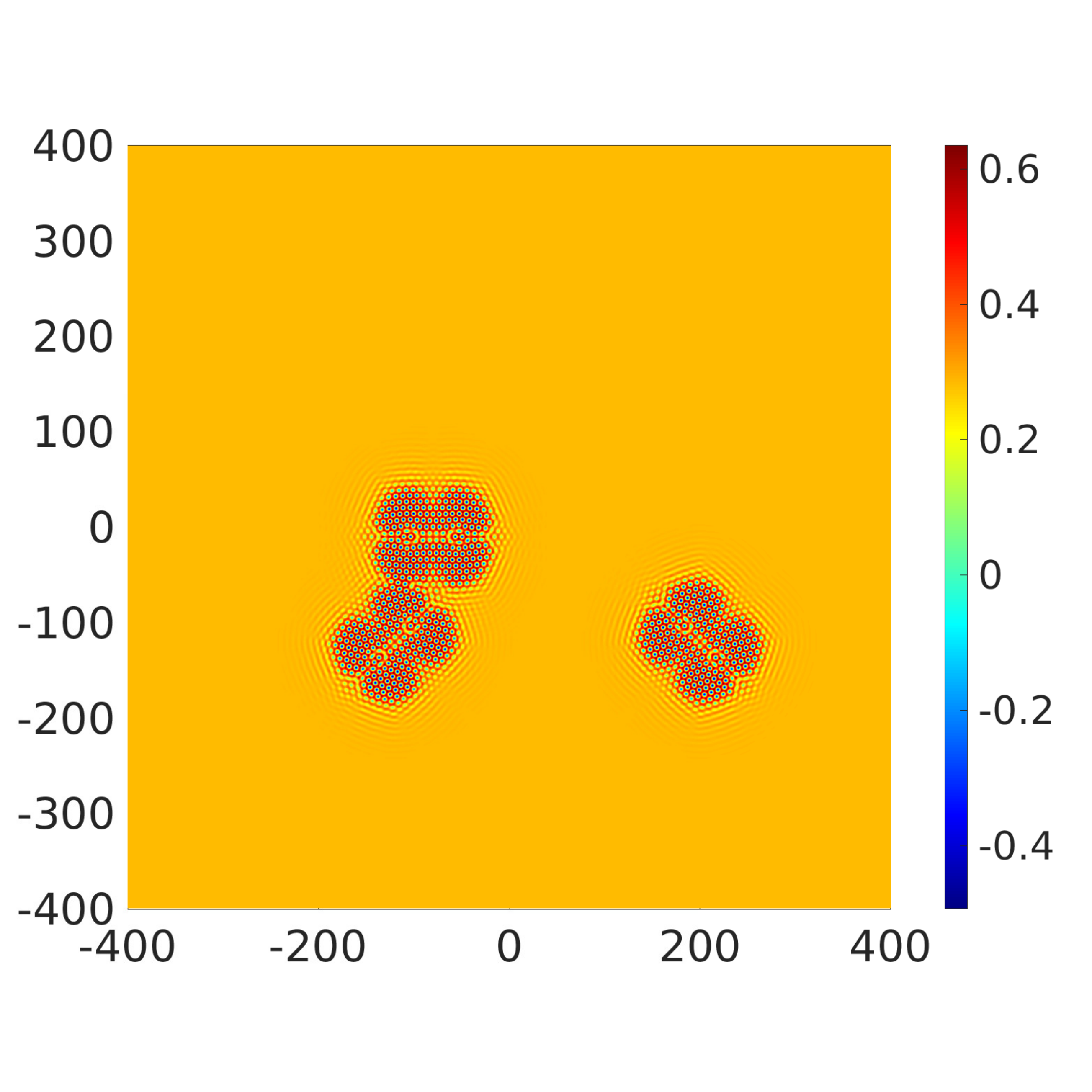}} 
\subfloat[t=400]{\label{figsub:PFC2big-4} \includegraphics[width=0.47\linewidth]{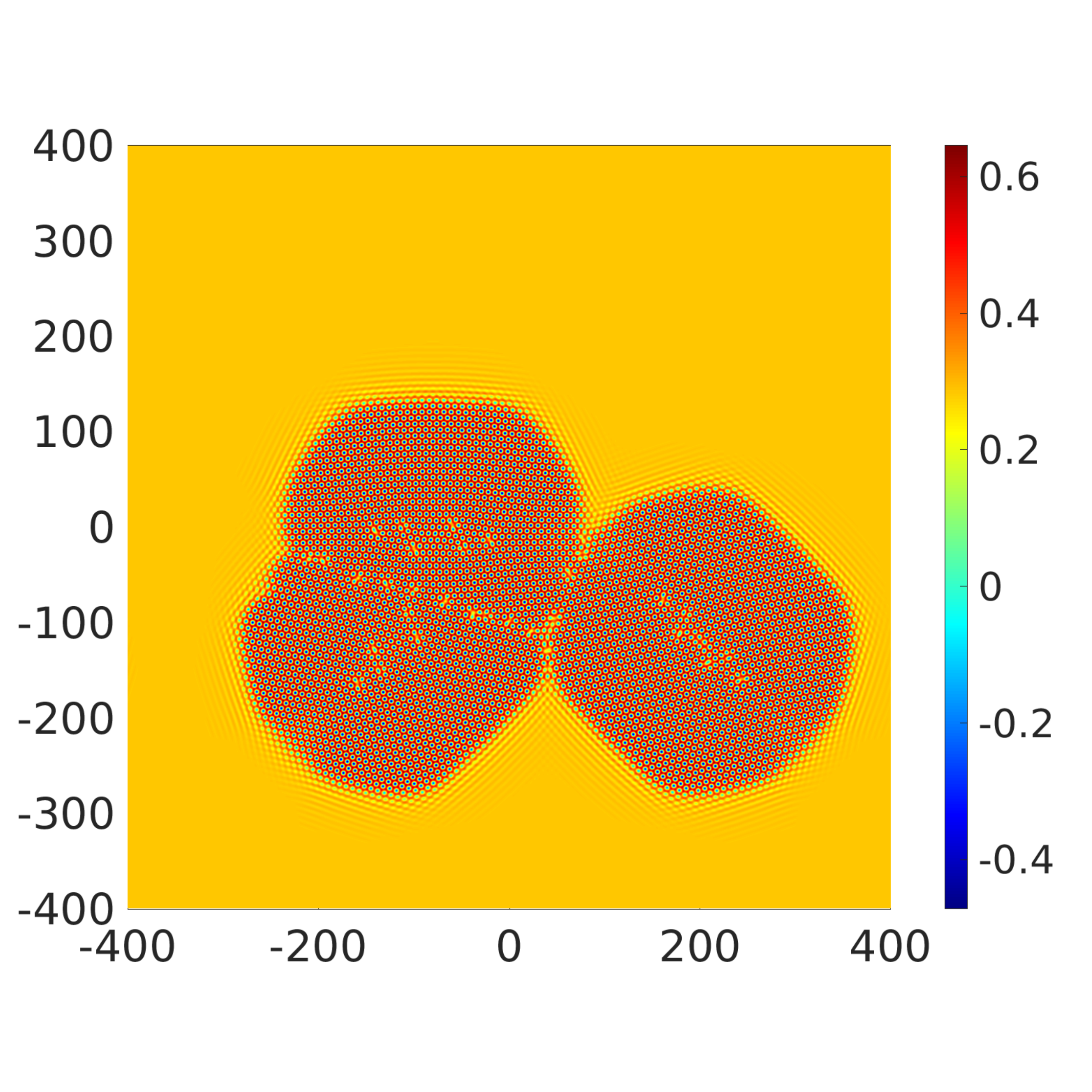}}
\\[-3ex]
\subfloat[t=1000]{\label{figsub:PFC2big-5} \includegraphics[width=0.47\linewidth]{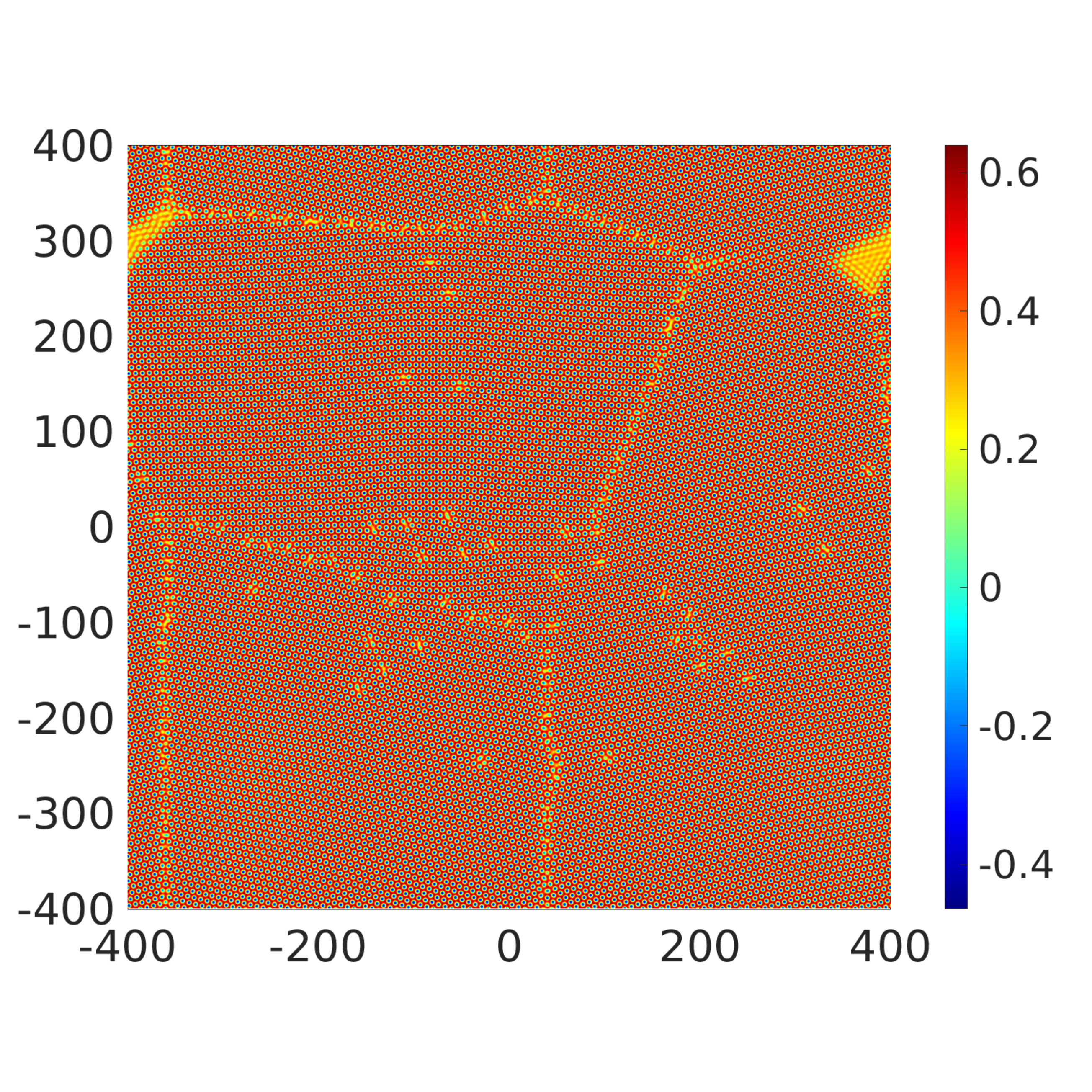}} 
\caption{Simulation of crystal growth in a supercooled liquid, as originally implemented in \citep{gomez2012PFCnumerical}. Its miniature version is used for PFC2 benchmark computations.}
\label{fig:evol6}
\end{figure}

\begin{figure}
	\centering
	\subfloat[t=18]{\label{figsub:FCH3Cmp1} \includegraphics[width=0.5\linewidth]{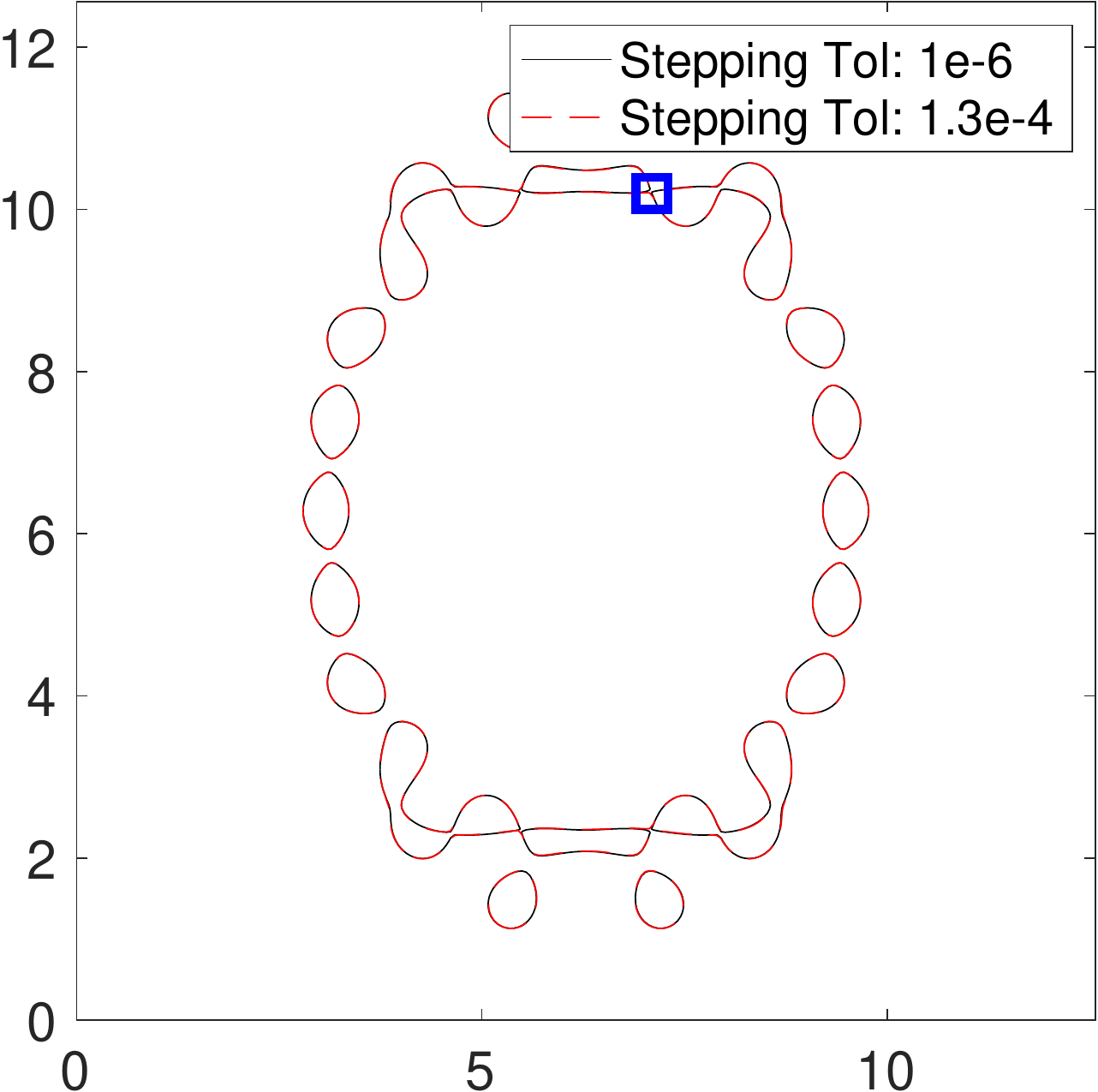}} 
	\subfloat[t=18 zoom-in]{\label{figsub:FCH3Cmp2} \includegraphics[width=0.5\linewidth]{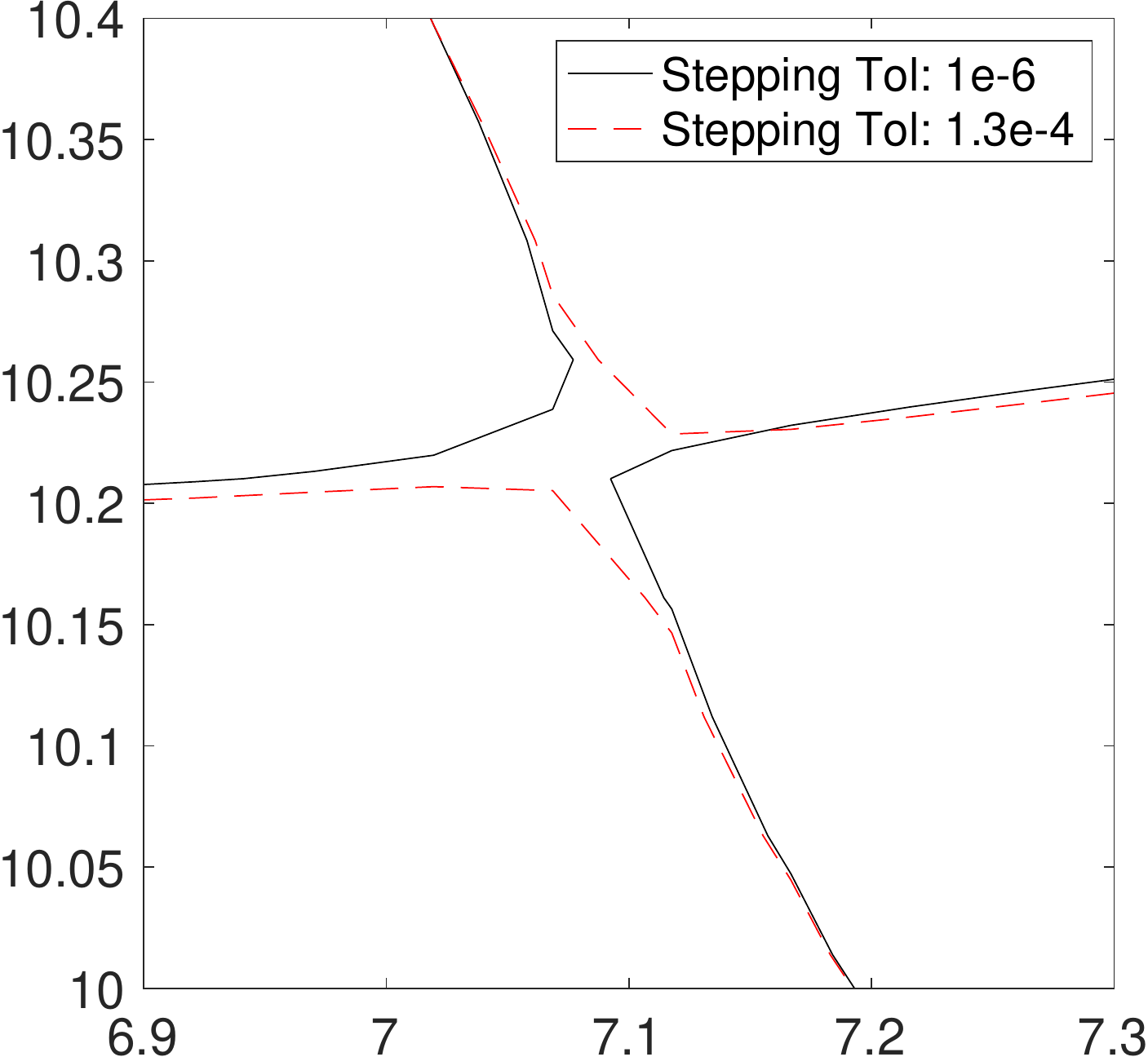}} 
	\caption{Different evolutions of FCH3 depending on different time stepping tolerances. Both are level curves of $u(x,y,t=18)=-0.01$  obtained  by BDF2 with time stepping tolerance $10^{-6}$ (solid black) and $1.3\times10^{-4}$ (dashed red), respectively. The right figure is a zoom--in around the point $(x,y)=(7.1, 10.2)$ (blue box).}
	\label{fig:FCH3CmpByResln}
\end{figure}

Detailed settings for the benchmark comparisons are summarized in Table \ref{tab:setting-benchmark}. Let us make a comment regarding the chosen range for the sweeping--$\eta$ strategy. This is obtained by taking square root of a collection of equally spaced $5$ numbers in the interval $[0.1,2]$. The square root is taken due to PAGD convergence theory. If the minimization problem is applied to a $\mu$--strongly convex functional, $\eta=\sqrt \mu$ is the optimal choice from the convergence analysis; see \citep{psw2021PAGD} for more details.

\begin{table}[h]
	\centering
	\resizebox{\textwidth}{!}{%
		\begin{tabular}{|l|l|l|}
			\hline
			& PDE setting                                                                                                                                                & Solver setting                                                                                                                                                                            \\ \hline
			\multicolumn{1}{|c|}{Common} &                                                                                                                                                            & \begin{tabular}[c]{@{}l@{}}$\dt_{max} = 0.5$, $\mathrm{TOL}_{iter} = 10^{-10}$, \\ sweeping--$\eta$ = ($\sqrt{0.1}$, $\sqrt{0.575}$, $\sqrt{1.05}$, $\sqrt{1.525}$, $\sqrt{2}$)
			\end{tabular} \\ \hline
			FCH1                         & \begin{tabular}[c]{@{}l@{}}$L=2 \pi$, $\Omega=(0,L)^2$, $\epsilon=0.18$,\\ $\eta_1= \epsilon^2$,      $\eta_2=\epsilon^2$,    $\tau = 0$\end{tabular}      & $N=2^7$, $s=0.4$, $\dt_{min} = 10^{-5}$                                                                                                                                                      \\ \hline
			FCH2                         & \begin{tabular}[c]{@{}l@{}}$L=12.8$, $\Omega=(0,L)^2$, $\epsilon=0.1$,\\ $\eta_1= 0.2$,      $\eta_2=0.2$,    $\tau = 0$\end{tabular}                      & $N=2^8$, $s=0.9$, $\dt_{min} = 10^{-5}$                                                                                                                                                      \\ \hline
			FCH3                         & \begin{tabular}[c]{@{}l@{}}$L=4 \pi$, $\Omega=(0,L)^2$, $\epsilon=0.1$,\\ $\eta_1= 1.45\epsilon$,      $\eta_2=2\epsilon$,    $\tau = 0.125$;\end{tabular} & $N=2^8$, $s=0.9$, $\dt_{min} = 10^{-5}$                                                                                                                                                      \\ \hline
			PFC1                          & \begin{tabular}[c]{@{}l@{}}$L=1.2\times32\times4\times \frac{\pi}{\sqrt 3}$, \\ $\Omega=(-L/2, L/2)^2$, $\varepsilon = 1.0$\end{tabular}                   & $N= 2^9$, $s=0.9$, $\dt_{min} = 10^{-4}$
			\\ \hline
			PFC2                          & \begin{tabular}[c]{@{}l@{}}$L=200$, \\ $\Omega=(-L/2, L/2)^2$, $\varepsilon = 0.25$\end{tabular}                   & $N= 2^9$, $s=0.9$, $\dt_{min} = 10^{-4}$                                                                                                                                                                                                                                                                          \\ \hline
		\end{tabular}%
	}
	\caption{Parameter settings for the benchmark comparison.}
	\label{tab:setting-benchmark}
\end{table}

\subsection{Performance of PAGD and PGD}
\label{sec:PAGDperform}
One of our main goals in this work is to show that the PAGD is a viable solver for certain types of challenging PDEs such as the PFC and FCH equations. To achieve this goal, for each one of these models, we compare the total computational cost of using PAGD vs.~PGD, the latter of which is known to be an efficient solver for such problems; see \citep{feng2017preconditioned, christlieb2020benchmark}. As a measure of cost, we count the number of FFTs needed to finish the evolution. Each of the benchmark problems is simulated using the solver described in Algorithm \ref{alg:solver} with the BDF2 time discretization scheme except that the MP scheme is used for PFC2,\footnote{BDF2 somehow makes both solvers extremely slow for PFC2.} once equipped with the PGD and another time with the PAGD as a solver. The AM3 adaptive time stepping is used for FCH1---3 and PFC1 with a stepping tolerance $10^{-4}$ while the midAB2 stepping is utilized for PFC2 with the same stepping tolerance. For the FCH evolutions, the final time is set to $T=100$, whereas, for PFC1 and PFC2, it is set to $T=10,000$ and $T=300$, respectively. All remaining parameter settings are the same as described in the previous section. The results are shown in Table \ref{tab:PGDvsPAGD}. 

\begin{table}[]
	\centering
	\begin{tabular}{|c|r|r|r|r|r|}
		\hline
		FFT  & \multicolumn{1}{c|}{FCH1} & \multicolumn{1}{c|}{FCH2} & \multicolumn{1}{c|}{FCH3} & \multicolumn{1}{c|}{PFC1} & \multicolumn{1}{c|}{PFC2}
		\\ \hline
		PGD  & 139841                    & 367246                    & 883543                    & 330908.5                  & 24811.5
		\\ \hline
		PAGD & 87097                     & 240374                    & 440807                    & 176887.5                  & 25284.0
		\\ \hline
	\end{tabular}
	\caption{Number of FFTs needed for the PGD and PAGD solvers to complete the evolution. The time stepping is carried out via BDF2.}
	\label{tab:PGDvsPAGD}
\end{table}

As Table \ref{tab:PGDvsPAGD} shows, the PAGD solver takes as few as half the number of FFTs needed for the PGD to carry out the same simulations for most of the cases. The exception of PFC2 is discussed in the next paragraph. We emphasize once more that PGD itself is known to be an efficient solver for the FCH model (e.g.,  \citep{christlieb2020benchmark, zhang2020benchmark}) and that PAGD needs only one more vector addition per iteration than PGD. Upon further inspection, we observe that, when the time step size is small, as is the case at the beginning of the evolution, the two solvers perform almost equally. However, when the time step size is relatively large, the PAGD costs much less than the PGD does for each time marching. This is in line with what is reported in \citep{psw2021PAGD} in the sense that the acceleration comes into play when the problem is ``hard''. 

The discussion in the preceding paragraph does not explain the results obtained for PFC2, where the computational cost (number of FFTs) of both solvers is similar. We speculate that this peculiarity is due to a well--behaving landscape of the physical energy functional associated to PFC2. From an intuitive perspective, the evolution described by PFC2 does not involve many possible bifurcations since the crystallites only grow as portions of a supercooled liquid (i.e., regions of constant phase variable) coagulate and continue the crystal pattern near the boundary of the grains. On the other hand, other simulations bear a certain symmetry in the system so that there are many possible bifurcations. As a result, there can be many more local minima in the energy functional, making solving them harder than PFC2. As reported in \citep{psw2021PAGD}, the acceleration of PAGD (in comparison to the PGD) tends to play a bigger role in ``harder" problems. The same tendency mentioned in the last two sentences of the previous paragraph is also observed in PFC2 case. However, a slightly better performance of the PGD in the beginning of the evolution (when the time step size is small) outweighs a marginally better performance of the PAGD towards the end of the simulation (when the time step size is big).

\subsection{Computational cost}
\label{sec:scheme-comparision}
In order to make our comparisons as fair as possible, we take into account both accuracy and efficiency. To this end, the experiment starts by preparing preliminary data. To be specific, we choose time $t$ of evolution (long enough for a certain morphological change to emerge) for each problem, then we choose a point $(x,y)$ in our domain, and the triple $(x,y,t)$ is formed. Then, we find a highly accurate solution, which is computed by the implicit Euler method with a constant time step that is so small that the difference between the computed values of $u(x,y,t)$ with the current and a ten times smaller time step is no larger than $10^{-6}$, while keeping a fixed spatial grid spacing. 

The implicit Euler method was chosen because, in our experience, this scheme is very robust. It is possible that the results of the numerical experiment may be different if one uses another method. However, our preliminary computations showed little difference in the point values at the reference coordinates. Following the same scheme, the difference in the reference point values computed by the implicit Euler and the LMP or LBDF2 method ranges from $1.86\times10^{-9}$ to $7.90\times10^{-7}$ across the five simulations,\footnote{For FCH1---3 and PFC1, the implicit Euler is compared with the LMP while the LBDF2 is compared for PFC2} suggesting we obtain a
6--digit precision.

The procedure we now describe aims at a 6--digit precision for the highly accurate solution at the reference point $(x,y,t)$. Let $u_{[\ell]}(x,y,t)$  be the point value at the reference point computed by setting the constant step size to $0.1^\ell$ ($\ell=1,2,3,\cdots$). Suppose that we have obtained that $|u_{[\ell]}(x,y,t)-u_{[\ell+1]}(x,y,t)| < 10^{-6}$. The second smallest constant time step size, i.e. $0.1^{\ell}$, is considered ``small enough'' for a 6--digit precision. Then, the reference point value is computed once again on a finer grid that has spacing smaller by a factor of $0.5$ than before, and with this small enough time step, i.e., $0.1^{\ell}$. If this point value still differs by less than $10^{-6}$ from the original approximation, i.e., the one before refining the grid spacing, then it is selected as the highly accurate solution, and its point value is used in the main experiment. In fact, in all problems, this is the case.  The reference coordinates $(x,y,t)$ for each benchmark problem are as listed below and also shown in Figure \ref{fig:ref-pts} (the center of the black dot). 

\begin{tabular}{cllr}
{FCH1:}& (4.71239 	&, 4.71239 	&, \hfill 10)  
\\
{FCH2:}& (7.1 		&, 8.85 	&, \hfill 10) 
\\
{FCH3:}& (6.92132, 	&, 10.7501 	&, \hfill 10)
\\
{PFC1:}& (20.6773 	&, 5.4414 	&, \hfill 1000)
\\
PFC2: & (43.3594 &, 14.4531 &, \hfill 300)
\end{tabular}

\begin{figure}
		\centering
	\subfloat[FCH1: $(x,y,t)$ = (4.71239, 4.71239, 10)]{\label{figsub:ref-pt-FCH1} \includegraphics[width=0.5\linewidth]{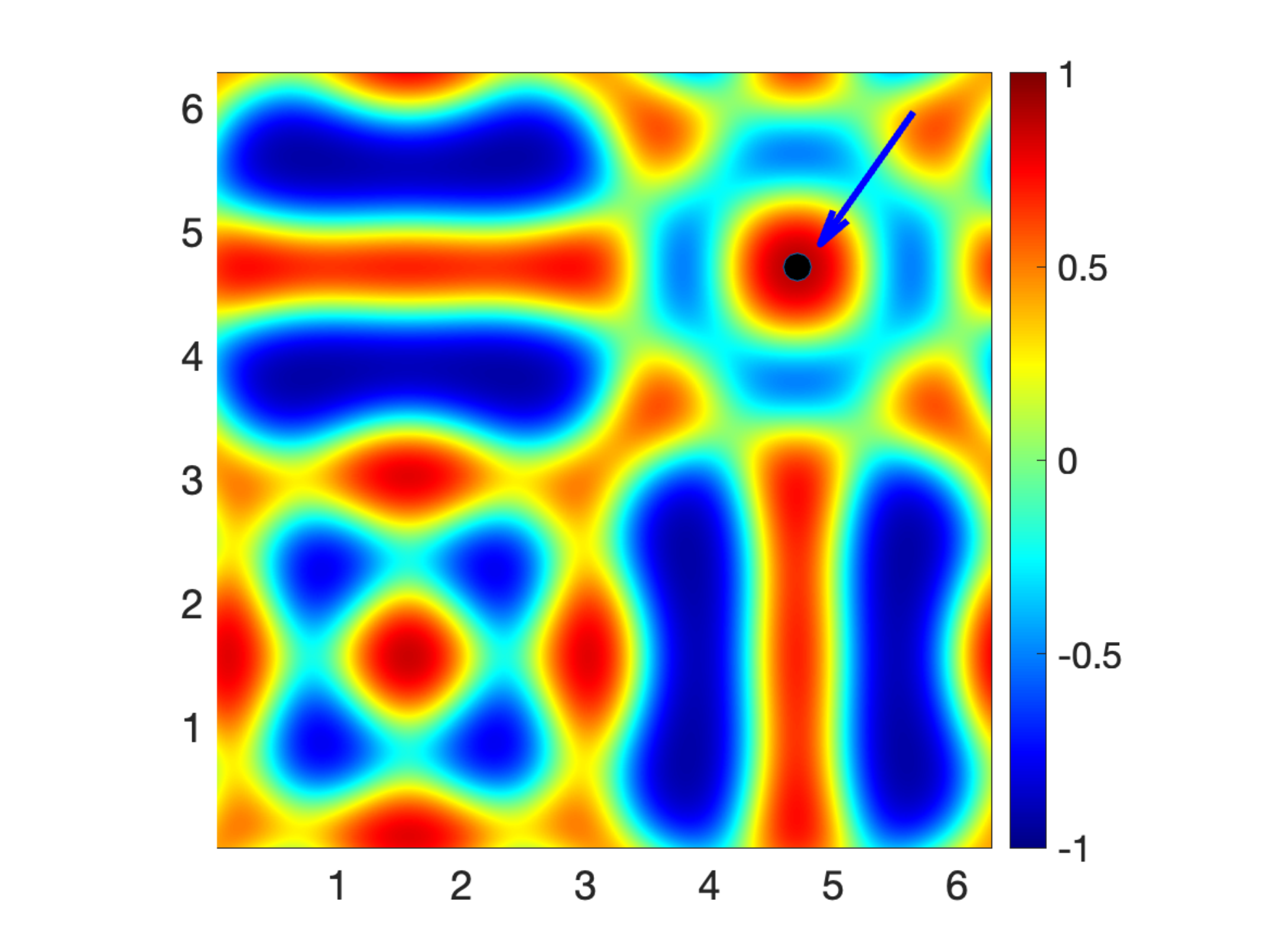}} 
	\subfloat[FCH2: $(x,y,t)$ = (7.1, 8.85, 10)]{\label{figsub:ref-pt-FCH2} \includegraphics[width=0.5\linewidth]{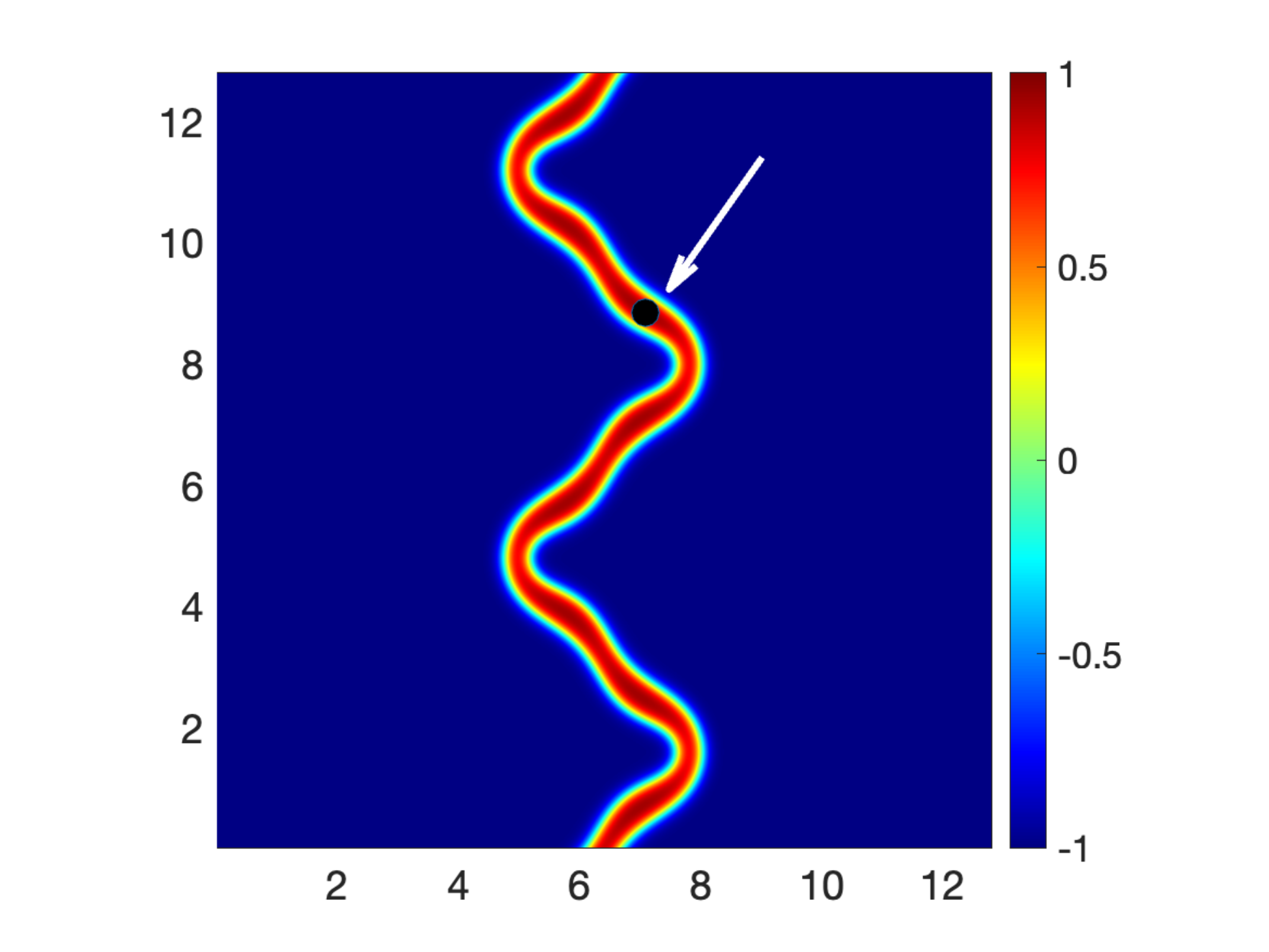}}
	\\
	\subfloat[FCH3: $(x,y,t)$ = (6.92132, 10.7501, 10)]{\label{figsub:ref-pt-FCH3} \includegraphics[width=0.5\linewidth]{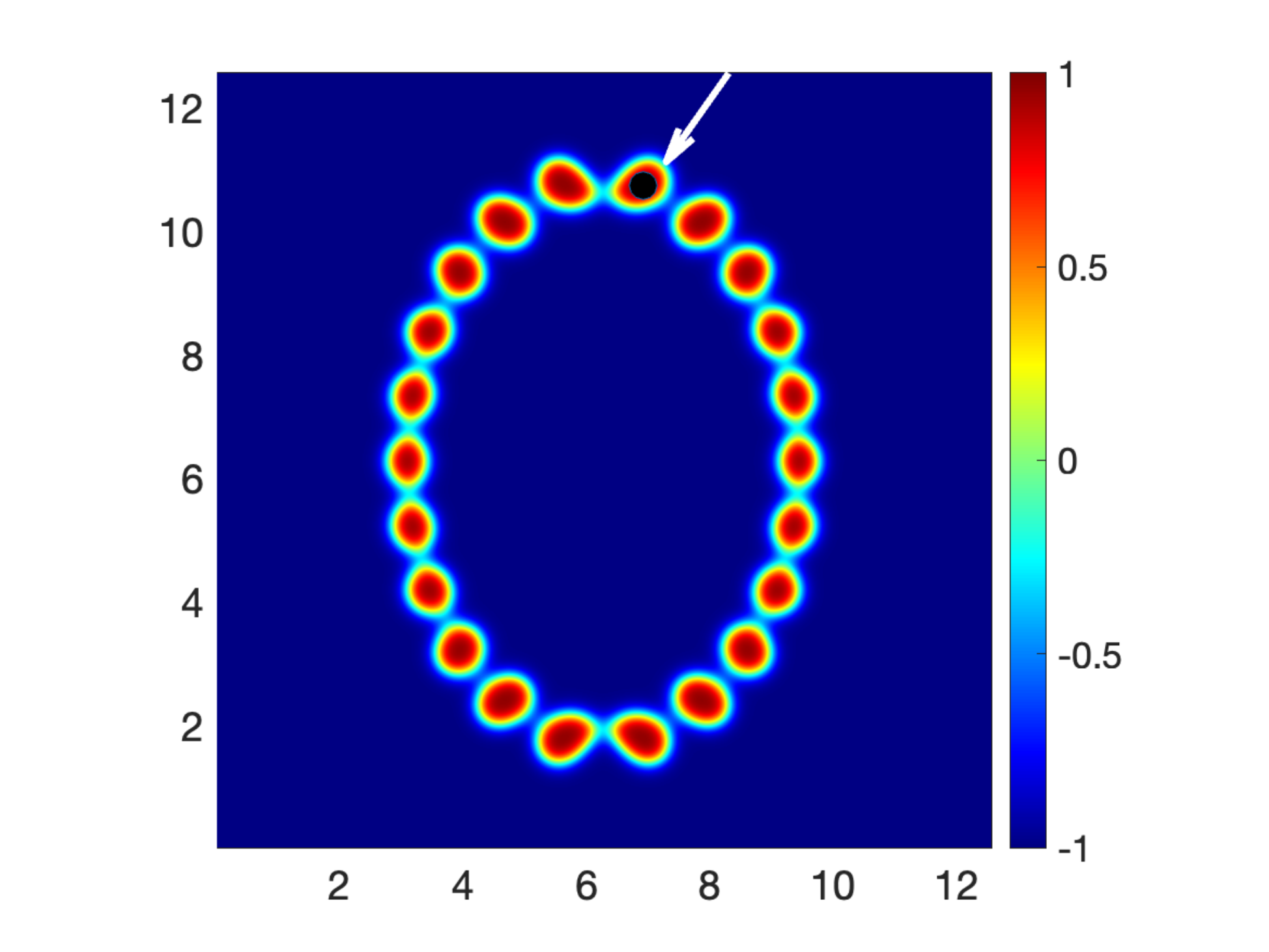}} 
	\subfloat[PFC1: $(x,y,t)$ = (20.6773,  5.4414, 1000)]{\label{figsub:ref-pt-PFC1} \includegraphics[width=0.4\linewidth]{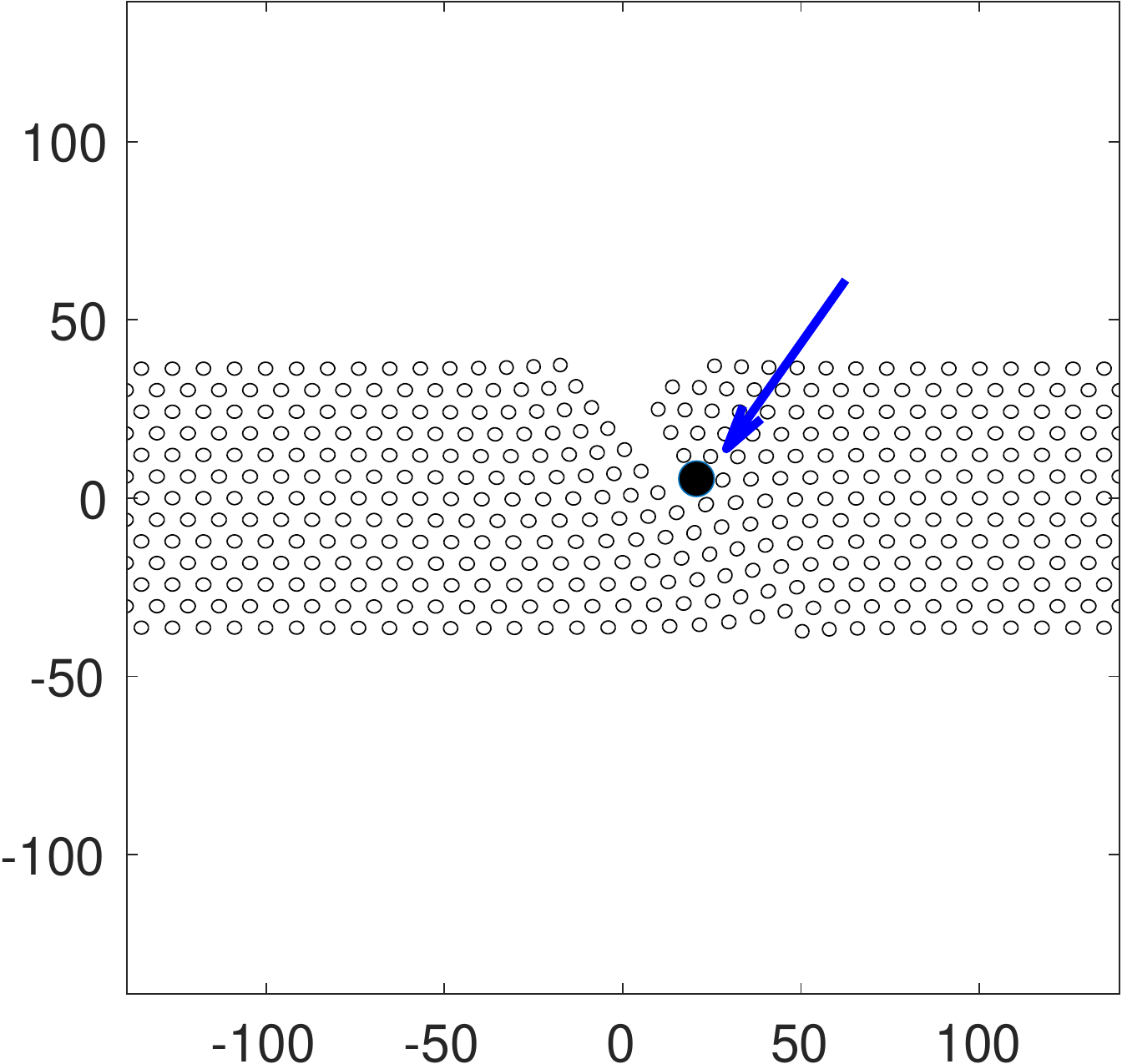}}
	\\
	\subfloat[PFC2: $(x,y,t)$ = (43.3594, 14.4531, 300)]{\label{figsub:ref-pt-PFC2} \includegraphics[width=0.5\linewidth]{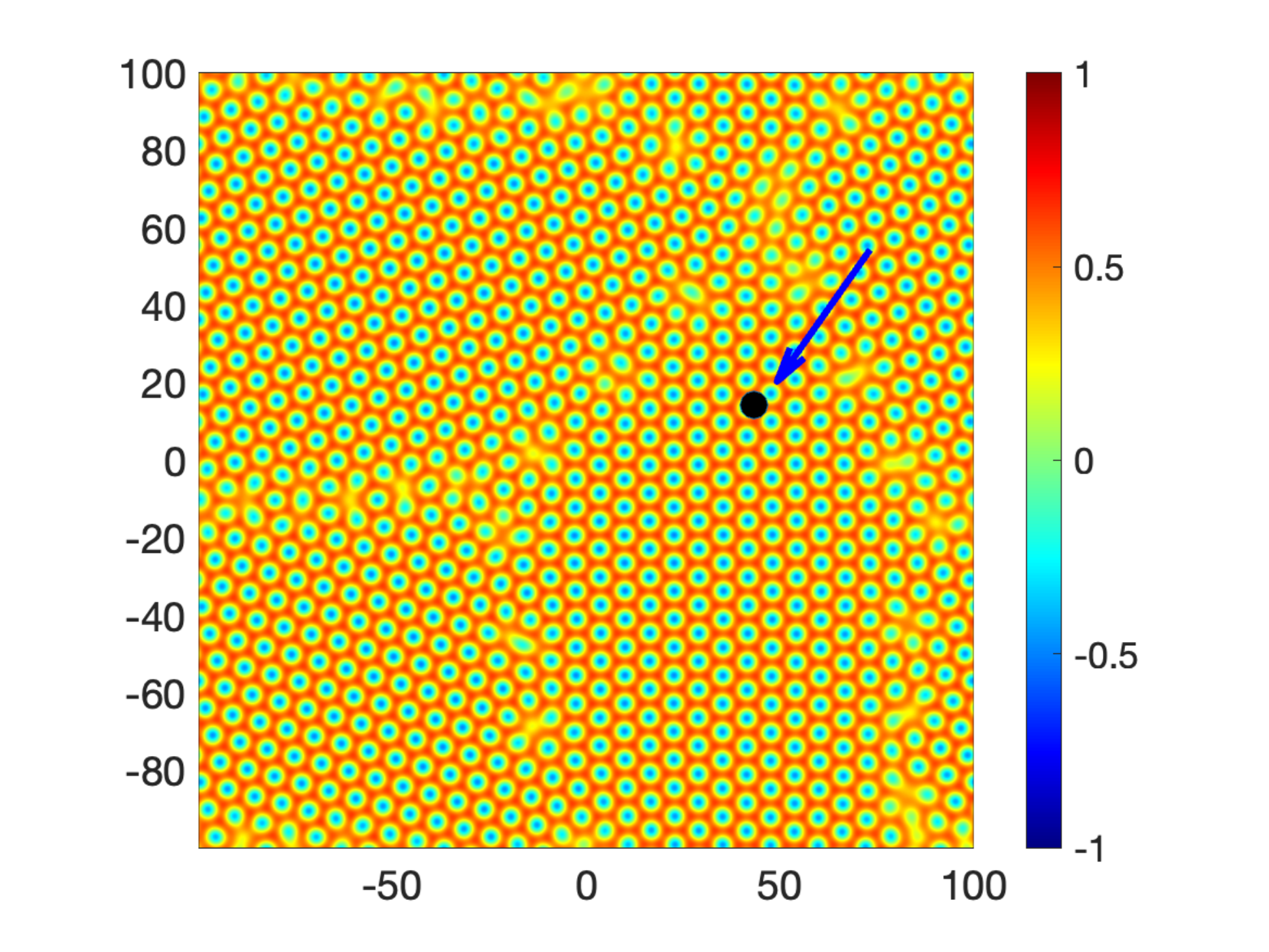}}
	\caption{Reference coordinates (the center of the black dot) for determining accuracy in the benchmark comparisons.}
	\label{fig:ref-pts}
\end{figure}

Now, the main experiment is done as Figure \ref{fig:exp-design} shows. For each combination of a problem and a solver, we start with a generous time stepping tolerance $1$, which will suggest rather large time step sizes through the adaptive time stepping. We simulate the evolution with this setting until it reaches the reference time $t$, and obtain the point value at the spatial reference coordinate $(x,y)$. Then, the \emph{objective error} is computed by the difference between $u(x,y,t)$ and the value of the precomputed, highly accurate solution at our reference point. If this error is larger than the \emph{objective tolerance} $10^{-5}$, which aims at a 5--digit precision, we restart the experiment with a time stepping tolerance that is reduced by a factor of $0.1$. This process is repeated until the objective error is smaller than the objective tolerance. When this occurs, we record the cost: FFT count (total number of the FFT and iFFT divided by two), the wall--clock time, and the CPU time consumed.

\begin{figure}[h]
	\begin{tikzpicture}[node distance=4cm, auto, >=latex', thick]
		\path[->] node[format] (steptol) {\textbf{Start} \\ time stepping \\ tolerance \\ ($1\sim10^{-10}$)}; 	
		\path[->] node[format, right of=steptol] (curptval) {point value\\ at $(x,y,t)$\\(current scheme)}
		(steptol) edge node {simulate} (curptval); 		
		\path[->] node[decision, right of=curptval] (errexam) {\tiny $\text{error}\stackrel{(?)}{<} \begin{pmatrix}\text{objective}\\ \text{tolerance}\end{pmatrix}$}
		node[format, below of=curptval] at (4,2) 
		(refptval) {point value at $(x,y,t)$ \\ highly accurate solution}
		(curptval) edge node (temp) {} (errexam); 		
		
		\path[->] node[format, below of =errexam] at (7,0) (record) {record\\cost}
		(errexam) edge node {Yes} (record)
		(refptval) edge (errexam);
		
		\path[<-, draw] (steptol) -- + (0,1.5) -| node[near start] {more restrictive stepping tolerance ($\times 0.1$)} node[near end] {No} (errexam);
	\end{tikzpicture}
	\caption[design of experiment]{A flowchart to determine computational cost.}
	\label{fig:exp-design}
\end{figure}
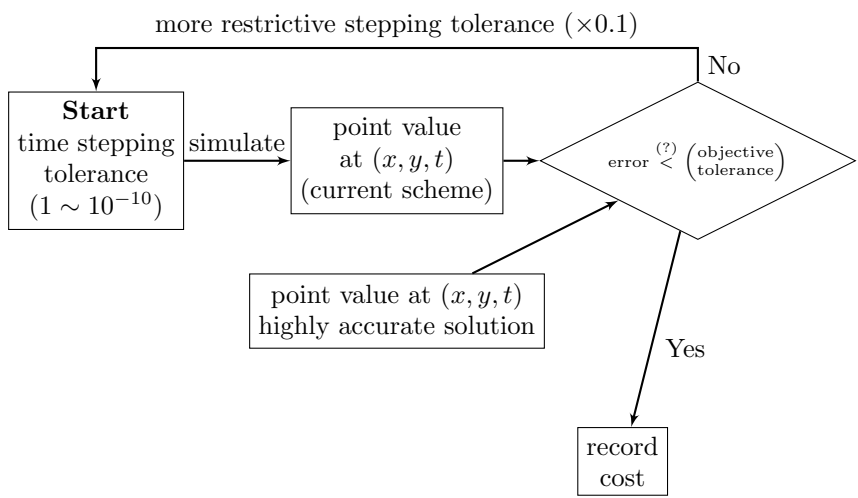

\begin{table}[]
	\centering
	\resizebox{\textwidth}{!}{%
		\begin{tabular}{|c|c|r|r|r|r|r|r|}
			\hline
			\bfseries Prob &
			\bfseries Scheme &
			\multicolumn{1}{c|}{\bfseries Step tol.} &
			\multicolumn{1}{c|}{\bfseries Point value} &
			\multicolumn{1}{c|}{\bfseries Obj. err.} &
			\multicolumn{1}{c|}{\bfseries FFT} &
			\multicolumn{1}{c|}{\bfseries Clock (sec)} &
			\multicolumn{1}{c|}{\bfseries CPU (sec)} 
			\csvreader[head to column names]{BenchmarkPFCH.csv}{}
			{\\\hline \Eqn & \Scheme & \TadpTOL & \objvval	& \objverr &	\FFT &	\ClockTime	& \CPUTime 
		}
		\\\hline
		\end{tabular}%
	}
	\caption{Results of benchmark experiment.}
	\label{tab:expbenchmark_csv}
\end{table}

\begin{figure}
	\subfloat[FFT count]{\includegraphics[height=0.26\textheight]{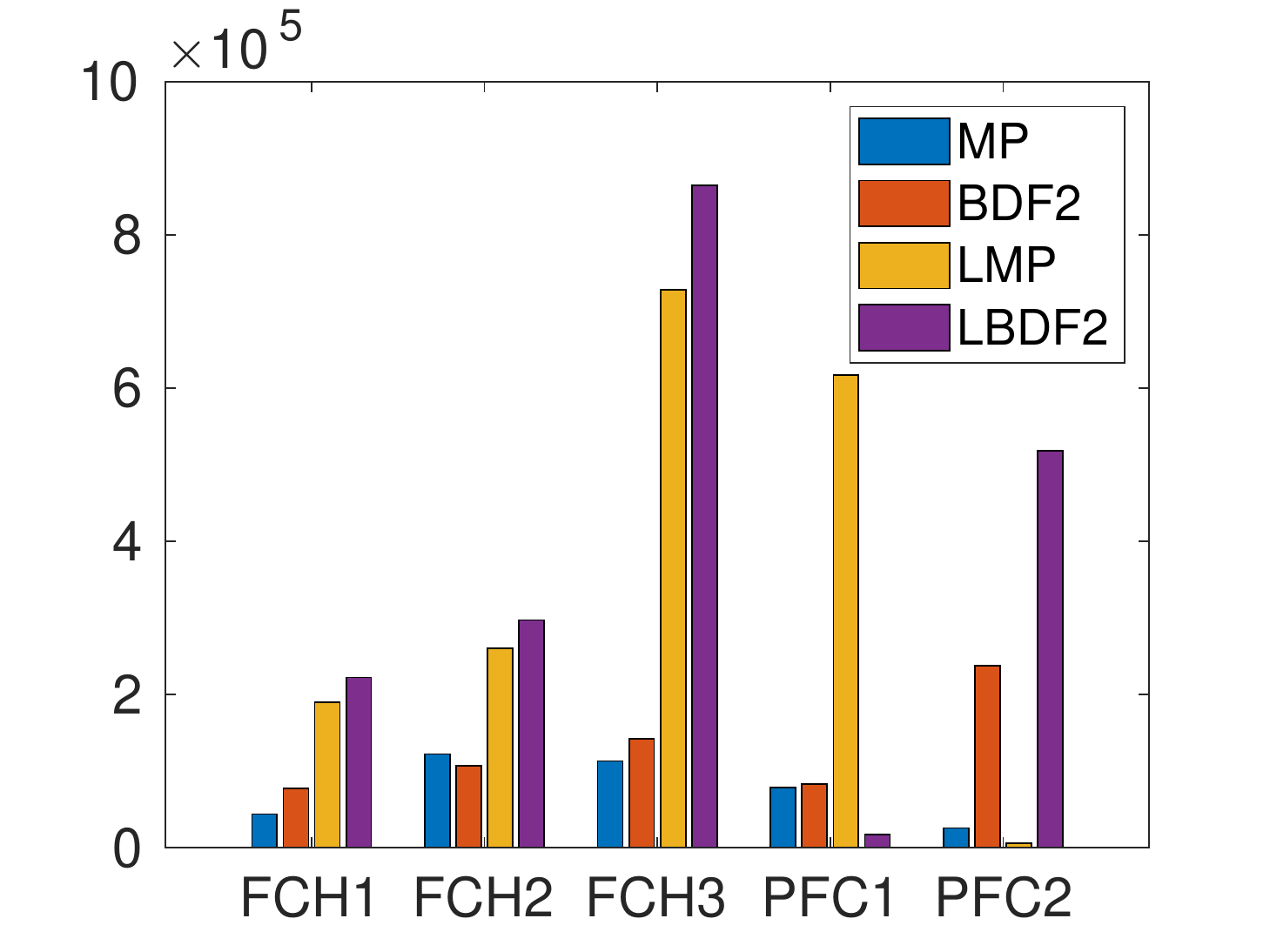}}
	\quad
	\subfloat[Clock Time (sec)]{\includegraphics[height=0.26\textheight]{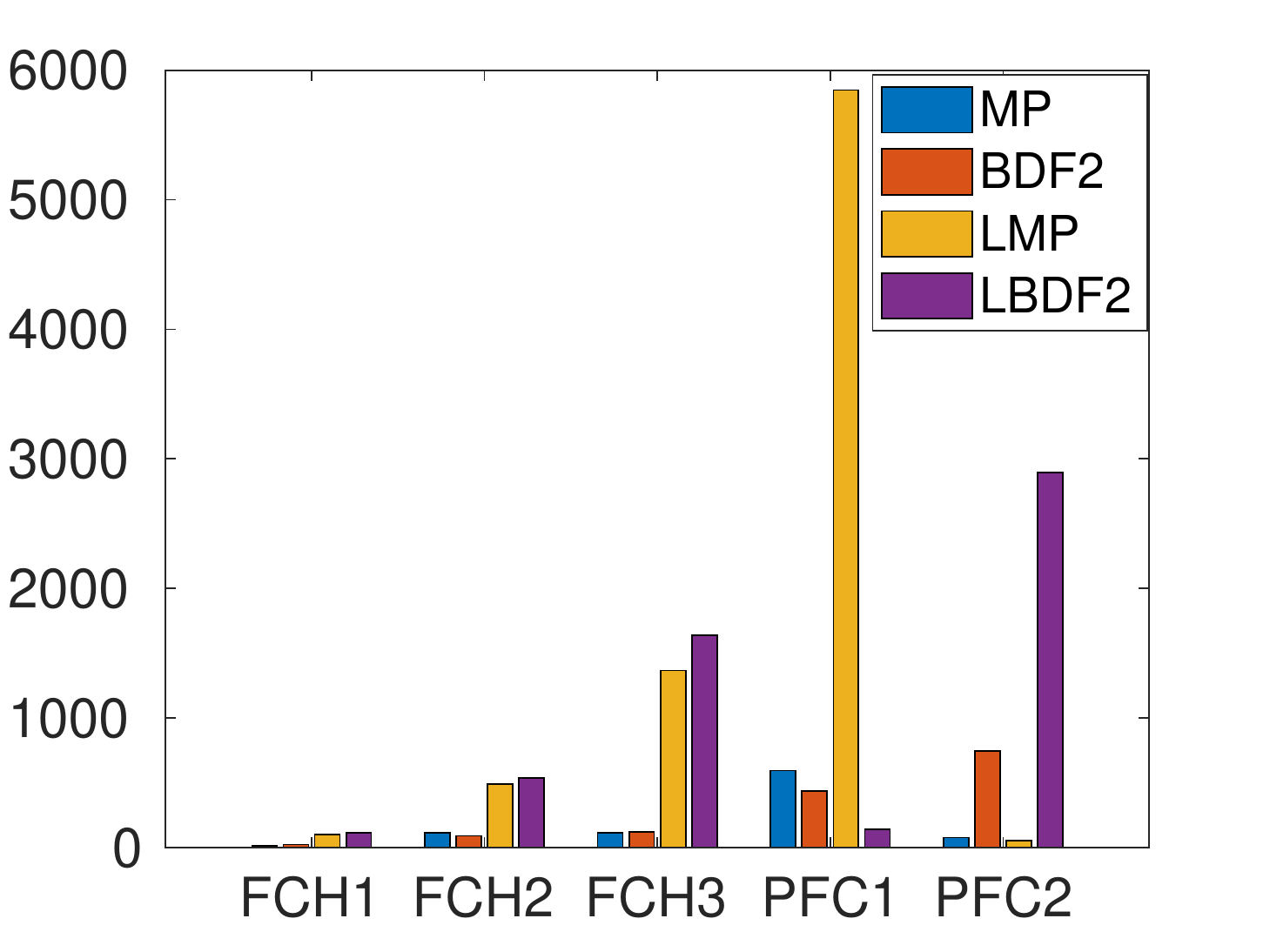}}
	\\
	\subfloat[CPU Time (sec)]{\includegraphics[height=0.26\textheight]{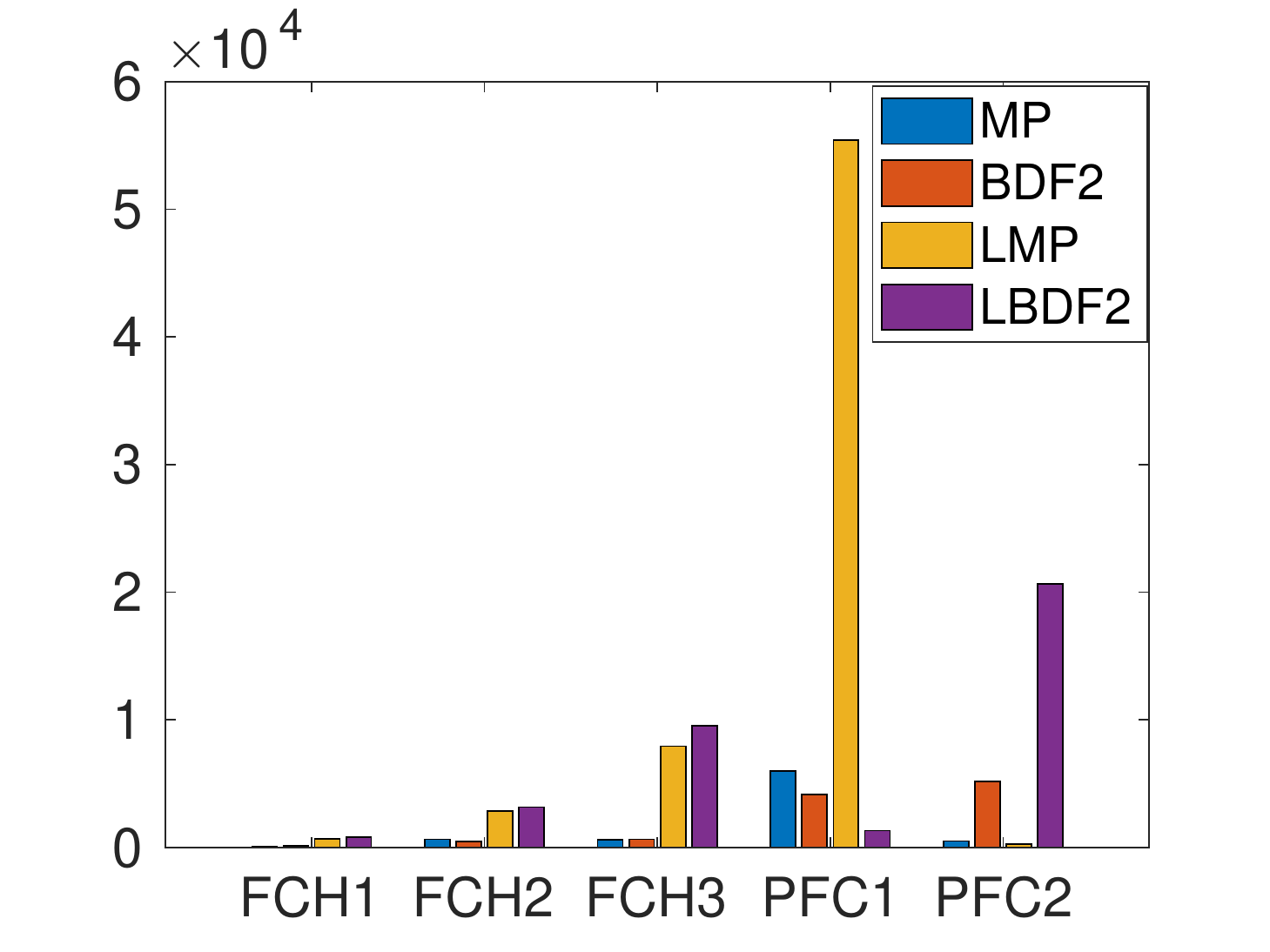}}
	\caption{Comparisons between number of FFTs, the wall--clock time, and CPU time of the MP, BDF2, LMP, and LBDF2. Implicit schemes perform better than semi--implicit schemes.}
	\label{fig:perf-cmp}
\end{figure}

The results of this experiment are summarized in Table \ref{tab:expbenchmark_csv}. Charts that reorganize our findings are also given in Figure \ref{fig:perf-cmp} so that one can compare the performance easily. 
As can be seen in Figure \ref{fig:perf-cmp}, for FCH1---3, implicit schemes perform significantly better than semi--implicit schemes. In an extreme case, the MP takes less than a fifteenth CPU time than the LBDF2 does for the FCH3 benchmark problem. Interestingly, for PFC1, semi--implicit schemes perform either very poorly (LMP) or very well (LBDF2) while the performance of implicit schemes is somewhere in the middle. Even more interestingly, for PFC2, the difference in performance diverges according to the time discretization, MP--based vs.~BDF2--based, rather than fully-- vs.~semi--implicit nature. And the worst performing solver for PFC1, namely LMP, shows an extremely good result. We suspect that the fact that the nonlinearity of the PFC equation is milder than that of the FCH equation (as one can see from their chemical potentials) explains the good performance of implicit solvers on FCH1---3. Also, although further investigations are needed, the well--behaving nature of the PFC2 evolution mentioned in the last paragraph of Section \ref{sec:PAGDperform} seems to make MP--based schemes more efficient than the BDF2--based ones.

\section{Conclusion}
\label{sec:conclusion}
In this work, we have introduced an efficient, time--adaptive solver for the FCH and PFC equations featuring the PAGD as solver. We observed that, if solver parameters are appropriately chosen, a PAGD-based solver outperforms a PGD-based solver, the latter of which has been recently developed and proven to be efficient on its own.

We have also conducted an experiment so that both the accuracy and the efficiency are measured in some way, and compared the performance of two fully implicit schemes, the MP and BDF2 equipped with the PAGD, and those of two semi--implicit schemes, the LMP and LBDF2. Our results show that the implicit schemes can be a good choice from a practical point of view, particularly for highly nonlinear problems. For such problems, although some desirable properties (e.g., unique solvability) may be not available for highly nonlinear, nonconvex problems, implicit schemes tend to take less cost to yield similarly accurate simulations than the semi--implicit schemes considered in this work, provided the implicit schemes are equipped with an efficient nonlinear solver such as the PAGD. This efficient solver can be harnessed without much of tedious paramter tuning with the help of averaged Newton preconditioner and the sweeping-friction strategy. Semi--implicit schemes can be very efficient for solving equations of a milder nonlinearity if an appropriate time discretization is chosen and a \emph{good} decomposition of the linear part and nonlinear part is chosen.

\section*{Acknowledgement}
The work of JHP was partially supported by NSF grants DMS-1720213, DMS-1719854, and DMS-2012634. The work of AJS was partially supported by NSF grants DMS-1720213 and DMS-2111228. The work of SMW was partially supported by  DMS-1719854 and DMS-2012634.

\bibliographystyle{plainnat}
\bibliography{PFCH}

\end{document}